\documentclass[10pt,reqno]{amsart}
\usepackage[left=2.5cm,right=2.2cm,top=2.5cm,bottom=3.5cm]{geometry}
\usepackage[colorlinks=true,allcolors=blue]{hyperref}
\usepackage{color}
\usepackage{amsmath, amssymb, amsthm}
\usepackage{mathtools}
\usepackage[noabbrev,capitalize,nameinlink,nosort]{cleveref}
\usepackage{enumerate}
\usepackage[hang,flushmargin]{footmisc}

\usepackage{booktabs}
\usepackage{makecell}
\usepackage{pgfplots}
\usepackage{pgfplotstable}
\pgfplotsset{compat=1.18}

\usepackage[shortlabels]{enumitem}
\crefformat{enumi}{#2#1#3}
\crefrangeformat{enumi}{#3#1#4 to~#5#2#6}
\crefmultiformat{enumi}{#2#1#3}
{ and~#2#1#3}{, #2#1#3}{ and~#2#1#3}

\let\originalleft\left
\let\originalright\right
\renewcommand{\left}{\mathopen{}\mathclose\bgroup\originalleft}
\renewcommand{\right}{\aftergroup\egroup\originalright}

\crefformat{equation}{#2(#1)#3}

\usepackage{aliascnt}
\newtheorem{lemma}{Lemma}[section]
\crefname{lemma}{Lemma}{Lemmas}
\Crefname{lemma}{Lemma}{Lemmas}

\newaliascnt{proposition}{lemma}
\newtheorem{proposition}[proposition]{Proposition}
\aliascntresetthe{proposition}
\crefname{proposition}{Proposition}{Propositions}
\Crefname{proposition}{Proposition}{Propositions}

\newaliascnt{theorem}{lemma}
\newtheorem{theorem}[theorem]{Theorem}
\aliascntresetthe{theorem}
\crefname{theorem}{Theorem}{Theorems}
\Crefname{theorem}{Theorem}{Theorems}

\newaliascnt{corollary}{lemma}

\aliascntresetthe{corollary}
\crefname{corollary}{Corollary}{Corollaries}
\Crefname{corollary}{Corollary}{Corollaries}

\newaliascnt{fact}{lemma}
\newtheorem{fact}[fact]{Fact}
\aliascntresetthe{fact}
\crefname{fact}{Fact}{Facts}
\Crefname{fact}{Fact}{Facts}

\newaliascnt{conjecture}{lemma}

\aliascntresetthe{conjecture}
\crefname{conjecture}{Conjecture}{Conjectures}
\Crefname{conjecture}{Conjecture}{Conjectures}

\newaliascnt{claim}{lemma}

\aliascntresetthe{claim}
\crefname{claim}{Claim}{Claims}
\Crefname{claim}{Claim}{Claims}

\theoremstyle{definition}

\newaliascnt{definition}{lemma}
\newtheorem{definition}[definition]{Definition}
\aliascntresetthe{definition}
\crefname{definition}{Definition}{Definitions}
\Crefname{definition}{Definition}{Definitions}

\newaliascnt{example}{lemma}

\aliascntresetthe{example}
\crefname{example}{Example}{Examples}
\Crefname{example}{Example}{Examples}

\theoremstyle{remark}

\newaliascnt{remark}{lemma}
\newtheorem{remark}[remark]{Remark}
\aliascntresetthe{remark}
\crefname{remark}{Remark}{Remarks}
\Crefname{remark}{Remark}{Remarks}

\newtheorem*{remark*}{Remark}

\newcommand*{\claimproofname}{Proof of claim}

\crefname{section}{Section}{Sections}
\Crefname{section}{Section}{Sections}
\crefname{subsection}{Section}{Sections}
\Crefname{subsection}{Section}{Sections}
\crefname{equation}{}{} 
\crefname{algorithm}{Algorithm}{Algorithms}
\Crefname{algorithm}{Algorithm}{Algorithms}

\AddToHook{cmd/appendix/after}{%
  \crefalias{section}{appendix}%
}

\newcommand{\mc}{\mathcal}

\newcommand{\mr}{\mathrm}

\newcommand{\eps}{\varepsilon}

\newcommand{\hide}[1]{}

\allowdisplaybreaks

\usepackage{parskip}
\makeatletter
\def\thm@space@setup{%
  \thm@preskip=\parskip \thm@postskip=0pt
}
\def\th@remark{%
  \thm@space@setup
  \thm@headfont{\itshape}%
  \normalfont
}
\makeatother

\let\le\leqslant
\let\ge\geqslant
\newcommand{\ZZ}{\mathbb{Z}}
\newcommand{\RR}{\mathbb{R}}
\newcommand{\NN}{\mathbb{N}}

\newcommand{\PP}{\mathbb{P}}

\newcommand{\EE}{\mathbb{E}}
\usepackage{algorithm}
\usepackage{algpseudocode}
\algtext*{EndIf}
\algtext*{EndFor}
\usepackage{bbm}

\newcommand{\cH}{\mathcal{H}}

\newcommand{\cD}{\mathcal{D}}

\newcommand{\Mod}[1]{\ (\text{mod}\ #1)}
\renewcommand{\Mod}[1]{{\ifmmode\text{\rm\ (mod~$#1$)}\else\discretionary{}{}{\hbox{ }}\rm(mod~$#1$)\fi}}
\newcommand{\defeq}{\vcentcolon=}

\DeclareMathOperator{\comp}{comp}
\DeclareMathOperator{\del}{del}

\DeclareMathOperator{\half}{half}
\DeclareMathOperator{\init}{init}
\DeclareMathOperator{\light}{light}
\DeclareMathOperator{\size}{size}
\DeclareMathOperator{\rep}{rep}
\DeclareMathOperator{\com}{com}
\DeclareMathOperator{\cir}{circ}
\newcommand{\dir}{\mathbf{d}}

\title[]{No-$(k+1)$-in-line problem for $k \ge 3$}

\author[]{Anubhab Ghosal}
\author[]{Ritesh Goenka}
\author[]{Alexandr Grebennikov}
\author[]{Peter Keevash}
\author[]{Matthew Kwan}
\author[]{Huy Tuan Pham}

\address{Mathematical Institute \\ University of Oxford}
\email{\{ghosal,goenka,keevash\}@maths.ox.ac.uk}
\address{Institute of Science and Technology Austria (ISTA)}
\email{\{aleksandr.grebennikov,matthew.kwan\}@ist.ac.at}
\address{Department of Mathematics, California Institute of Technology}
\email{htpham@caltech.edu}

\begin{document}

\begin{abstract}
  What is the maximum number of points one can place in an $n \times n$ grid such that every Euclidean line contains at most $k$ points? For $k = 2$, this is the notorious no-three-in-line problem of Dudeney. In this paper, we resolve this problem for all other $k$ (and sufficiently large $n$). Namely, for $k \ge 3$ and sufficiently large $n$, we show that this maximum is exactly $kn$.
  
  To prove this, our key observation is that in the regime $k\ge 3$, the problem is dominated in a certain statistical sense by the influence of a small number of ``heavy'' lines with many grid points. We apply a result of Ehard--Glock--Joos on pseudorandom hypergraph matchings to construct a set of size $kn - o(n)$ with at most $k$ points on each heavy line, and then a crude deletion argument yields a no-$(k+1)$-in-line set of nearly the same size. Finally, we use a randomised switching procedure to complete the construction (building upon ideas of Simkin and Luria).

  Using similar ideas, we also address the no-four-on-a-circle problem of Erd\H{o}s and Purdy. Namely, we prove the existence of a set of $2n - o(n)$ points in the $n \times n$ grid such that no four of these points lie on a circle or a line, improving on the previous construction of size $n - o(n)$ due to Dong and Xu. 
\end{abstract}

\maketitle

\section{Introduction}

The \emph{no-three-in-line} problem, posed by Dudeney in the early 20th century~\cite{dudeney-1917}, asks for the maximum number of points that can be placed on an $n \times n$ grid such that no three points are collinear. This problem is still open: the best known upper bound $2n$ comes from the observation that each of the $n$ horizontal lines can contain at most two points, while the best known lower bound $(1.5-o(1))n$ is given by the \emph{modular hyperbola} construction of Hall, Jackson, Sudbery, and Wild~\cite{HJSW-75} (improving on an earlier algebraic construction of Erd\H os; see \cite{roth-51}). For more history and background, we refer to the surveys~\cite{brass-moser-pach-05,eppstein-18}.

There are several different conjectures concerning the asymptotic behaviour of the answer to the no-three-in-line problem, including suggestions that it could be roughly $1.5n$~\cite{green-100-open-problems}, roughly $2n$~\cite{brass-moser-pach-05}, or somewhere in between~\cite{eppstein-18,guy-kelly-68}. For $n \le 60$, examples of no-three-in-line sets of size $2n$ were found by Prellberg~\cite{prellberg-26}.

A natural generalisation of this problem, first studied by Brass and Knauer~\cite{brass-knauer-03} in a more general context, is to ask for the maximum size of a subset of the $n \times n$ grid such that no $k+1$ points are collinear. Denoting this maximum by $f_k(n)$, the trivial upper bound is $f_k(n) \le kn$, since each of the $n$ rows (or columns) can have at most $k$ points. Also, note that this problem is only interesting for $n \ge k$: when $n \le k$, the whole grid has no $k+1$ collinear points, and thus $f_k(n) = n^2$.

Lefmann~\cite{lefmann-12} proved that $f_k(n) = \Omega(kn)$ for $n \ge k \ge 2$. Kov\'acs, Nagy, and Szab\'o~\cite{KNS-25} showed that $f_k(n) = kn$ as long as $n \ge k \ge C \sqrt{n \log n}$ for some absolute constant $C$, and Grebennikov and Kwan~\cite{grebennikov-kwan-25} recently extended this result to all $n \ge k \ge 10^{37}$.

For $3 \le k < 10^{37}$, the best known lower bounds are due to Kov\'acs, Nagy, and Szab\'o~\cite{KNS-25-algebraic}: by combining algebraic constructions with probabilistic ideas, they showed \cite[Theorem~1.5]{KNS-25-algebraic} that if $n$ is sufficiently large then $f_k(n) \ge (k-2-(k \bmod 2)) n$, and further improved the multiplicative constant for small values of $k$ (see \cite[Theorems 1.6 and 1.7]{KNS-25-algebraic}, e.g.\ they obtain that $f_3(n) \ge 1.973 n$).
In this paper, we prove that in fact $f_k(n) = kn$ for every $k \ge 3$ and sufficiently large $n$, thus showing that the trivial upper bound is tight.

\begin{theorem} \label{no-k+1-in-line-kn}
  Let $n, k$ be integers such that $k \ge 3$ and $n \ge \max(n_0, k)$ for some absolute constant $n_0$. Then there exists a set $S \subseteq [n]^2$ of size $kn$ such that every Euclidean line contains at most $k$ points of $S$.
\end{theorem}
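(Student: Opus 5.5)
The plan follows the strategy announced in the abstract: an approximate construction via pseudorandom hypergraph matchings, a deletion step, and then an exact completion by randomised switching. We treat large $k$ separately: for $k \ge 10^{37}$ the statement is the result of Grebennikov and Kwan~\cite{grebennikov-kwan-25}, so we may assume $3 \le k < 10^{37}$, i.e.\ $k$ is a constant and $n\to\infty$.

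\textbf{Step 1 (heavy lines dominate).} Fix a threshold $L$ slowly growing in $n$ and call a line \emph{heavy} if it meets $[n]^2$ in at least $n/L$ points (these have direction $(a,b)$ with $\max(|a|,|b|)\le L$), and \emph{light} otherwise. We aim to choose each point of $S$ with density about $k/n$. The key statistical check is that light lines are harmless: a line with $m$ grid points has $\binom{m}{k+1}(k/n)^{k+1}$ expected $(k+1)$-subsets in $S$. Summing over lines, grouped by direction $(a,b)$ with $m\approx n/\max(|a|,|b|)$ and about $n\max(|a|,|b|)$ lines per direction, gives roughly $\sum_{d} d\cdot n d \cdot (k/d)^{k+1}\approx n\sum_d d^{1-k}$. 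For $k\ge3$ this converges, and the tail over $d> L$ is $o(n)$. This is exactly where $k\ge 3$ is used; for $k=2$ the sum diverges logarithmically. So only the finitely-many-directions (up to $L$) heavy lines need exact control.

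\textbf{Step 2 (matching on heavy lines).} Build an auxiliary hypergraph whose vertices are the heavy lines (each with capacity $k$, handled by taking $k$ copies or by working with $k$-fold slots per row and column) and whose edges are grid points, each edge being the set of heavy lines through that point. This hypergraph is nearly regular with bounded edge size (at most $O(L^2)$ lines through a point). We invoke the Ehard--Glock--Joos theorem to get a matching covering all but $o(n)$ capacity in each row and column, i.e.\ a set of $kn-o(n)$ points with at most $k$ per heavy line. Its pseudorandomness guarantee, that weight functions behave as in a random choice, lets us certify that the number of $(k+1)$-collinear light-line configurations is $o(n)$, matching the Step 1 computation. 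Deleting one point from each bad configuration gives a genuine no-$(k+1)$-in-line set $S_0$ of size $kn-o(n)$, still close to random in distribution.

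\textbf{Step 3 (completion, the main obstacle).} We must reach exactly $k$ points in every row; this forces $|S|=kn$. Rows and columns with a deficit number $o(n)$ in total. Following Simkin and Luria, we repeatedly pick a deficient row $r$ and deficient column $c$ and try to add a point. If the point $(r,c)$ is blocked, we perform a switch: remove some point $p$ of $S$ lying in row $r'$ and column $c'$, add $(r,c')$ and $(r',c)$, and so on, with the choices made randomly among many candidates. The difficulty is to show that for a typical switch the new points create no full line. A new point $x$ is blocked only if it lies on a line already containing $k$ points. We will bound, using the pseudorandomness of the current set maintained as an invariant, the number of such blocked positions in a row by a small fraction of the row. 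Then a random choice succeeds with high probability, and it perturbs the statistics only by $O(1)$ per step over $o(n)$ steps. Heavy lines need separate care, since a heavy line may be saturated. We would reserve, in Step 2, a small random set of rows and columns with slack on heavy lines to absorb these moves. Maintaining the invariant through all $o(n)$ switches is the technical heart of the proof.
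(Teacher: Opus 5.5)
Your Steps 1--2 are sound and essentially coincide with the paper's approximate construction. The paper fixes a small $\eps$ rather than a growing threshold, and deletes one point from each quadruple on an $\eps$-light line, so light lines keep at most $3\le k$ points; your $(k+1)$-tuple version works equally well. The genuine gap is Step~3. This is the heart of the theorem, and there you describe the goal rather than a mechanism.

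\textbf{Valid switches at the start.} You never show that valid switches exist. For every target $(c,r)$ you need $\Omega(n)$ points $(x,y)\in S$ such that every non-trivial heavy line through $(x,r)$ and through $(c,y)$ carries at most $k-1$ points. You also need $(x,r)$ and $(c,y)$ not to lie on a common relevant line. Otherwise adding both can overfill a line that had $k-1$ points, a failure your ``blocked'' criterion overlooks; the paper forces this line to be irrelevant (\cref{A2}). Heuristically, after the nibble a heavy line $L$ is full with probability about $(|L|/n)^k$, so long diagonals are typically full. What you need is a statement about the joint saturation of the $O(\eps^{-2})$ heavy lines through each of two points. The paper gets it by feeding \cref{EGJ-spread} the extra test functions $w^{(c,r)}_{D_1,D_2}$ (for all $D_1,D_2\subseteq\mc{D}'_{\eps}$) and $w^{(c,r)}_{\com}$. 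Inclusion--exclusion then gives about $\frac{1}{n}\sum_{(x,y)\in I(c,r)}\prod_{\dir\in\mc{D}'_{\eps}}(1-(|L^*_{\dir}(x,r)|/n)^k)(1-(|L^*_{\dir}(c,y)|/n)^k)$ candidates. This is $\Omega(n)$ only after restricting to $(x,r),(c,y)$ far from the two main diagonals and using $\sum_{\dir}\|\dir\|_\infty^{-k}<\infty$, a second essential use of $k\ge 3$. Reserving rows and columns with slack does not address this. Rows and columns must end exactly full, and the obstruction is the non-trivial heavy lines through the two new points.

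\textbf{Keeping absorbers through the process.} ``Pseudorandomness maintained as an invariant'' is not an argument. Once switching starts, the current set is no longer an output of the nibble, and the added points are chosen adaptively. Nothing deterministic stops them from completing light lines through a large fraction of some row or column. Also, randomness is not there to make a random choice ``succeed''. One chooses uniformly among absorbers that are valid by definition, and the randomness is what makes later steps analysable. The paper's mechanism is:
(i) make $S_{\init}$ a $(2k/n,n^{\gamma})$-spread random set, which needs the spreadness extension of Ehard--Glock--Joos, not merely its test-function guarantee;
(ii) use a stopping time so that each choice is uniform over at least $\rho n$ absorbers;
(iii) since the added set $S_{\comp}$ is not spread (points come in pairs), show instead that $S_{\half}$, which keeps one random point of each pair, is $(4k^2/(\rho n),n^2)$-spread conditionally on $S_{\init}$, and combine via \cref{spread-union};
(iv) apply the moment bound \cref{light-lines-via-spread} to triples on $\alpha$-light lines meeting a given row or column, transferring from $S_{\init}\cup S_{\half}$ back to $S_{\init}\cup S_{\comp}$ via \cref{A2};
(v) treat $\alpha$-heavy lines by counting lines through the $O(\eps n)$ added points, which requires $\eps\ll\alpha$.
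Without these ingredients, or substitutes for them, Step~3 does not show that the procedure survives every step. So the proposal does not reach a set of size exactly $kn$.
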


Of course, the assumption $k\ge 3$ means that in this paper we do not say anything new about the no-three-in-line problem (which corresponds to the case $k=2$). In fact, there are some important statistical differences between the case $k=2$ and the case $k\ge 3$ (related to the relative significance of lines in different directions), which we discuss in \cref{rmk:k-illustration}.

\begin{remark}\label{rmk:algebraic}
  All previous constructions for small $k$ have been ``algebraic'', in the sense that they are based on algebraic curves in an affine plane $(\ZZ/p\ZZ)^2$ for some prime $p\le n$ (in fact, Green~\cite{green-100-open-problems} asked whether every ``large'' no-three-in-line set reduces to an algebraic curve modulo some prime). In contrast, our proof of \cref{no-k+1-in-line-kn} is non-algebraic and uses only fairly crude combinatorial properties of lines in $[n]^2$. 
\end{remark}

\begin{remark}\label{rmk:toroidal}
  In this paper we are concerned with \emph{Euclidean} lines in $[n]^2\subseteq \RR^2$. One can also ask about ``toroidal'' lines in $(\ZZ/n\ZZ)^2$, but this makes the problem substantially different. Indeed, the toroidal analogue of \cref{no-k+1-in-line-kn} is simply not true: if $n$ is an odd prime, the ``trivial bound for generalised arcs'' implies that every no-$(k+1)$-in-line set in $(\ZZ/n\ZZ)^2$ has size at most $(k-1)(n+1)+1$. See \cite{BH05} for a survey of this topic.
\end{remark}

\subsection{Higher dimensions}

A further generalisation of this problem, also introduced by Brass and Knauer~\cite{brass-knauer-03}, asks for the maximum size of a subset of the $d$-dimensional grid $[n]^d$ that contains at most $k$ points in each affine subspace of dimension $s$. Denoting this maximum by $f_{k, d, s}(n)$, the trivial upper bound is $f_{k, d, s}(n) \le k n^{d-s}$. Again, this problem is only interesting for $s+1 \le k \le n^s$: since any $s+1$ points lie in a common $s$-dimensional affine subspace, for $k \le s$ we have $f_{k, d, s}(n) = k$, and since the whole grid $[n]^d$ has at most $n^s$ points in each $s$-dimensional affine subspace, for $k \ge n^s$ we have $f_{k, d, s}(n) = n^d$.

Improving on earlier results of Brass--Knauer~\cite{brass-knauer-03} and Lefmann~\cite{lefmann-12}, it was observed by Sudakov and Tomon~\cite[Theorem~1.4]{sudakov-tomon-24} that $f_{k, d, s}(n) = \Omega_d(n^{d-s})$ when $k$ is sufficiently large in terms of $d$, using a connection to optimal subspace evasive sets over finite fields. Dvir and Lovett \cite{dvir-lovett} gave an algebraic construction of such objects; an alternative construction was observed by Conlon following the random algebraic method \cite{conlon}, and by Sudakov and Tomon~\cite{sudakov-tomon-24}. Ghosal, Goenka, and Keevash~\cite[Theorem~1.1]{ghosal-goenka-keevash-25} extended the bound $f_{k, d, s}(n) = \Omega_d(n^{d-s})$ to all $k \ge d+1$. Grebennikov and Kwan~\cite{grebennikov-kwan-25} proved that $f_{k, d, s}(n) = (1 + o(1)) k n^{d-s}$ when both $n$ and $k$ tend to infinity. In the current work, we extend this result to the case when $k$ is arbitrary satisfying $d + 1 \le k \le n^s$ and $n$ tends to infinity.

\begin{theorem} \label{higher-dimensions}
  Let $n, d, s, k$ be integers such that $d+1 \le k \le n^s$ and $1 \le s \le d-1$. Fix an arbitrary $\eta > 0$, and suppose that $n$ is sufficiently large in terms of $d$ and $\eta$. Then there exists a set $S \subseteq [n]^d$ of size at least $(1-\eta) k n^{d-s}$ such that each $s$-dimensional affine subspace contains at most $k$ points of $S$.
\end{theorem}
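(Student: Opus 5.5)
We will construct $S$ in two stages, following the strategy outlined in the abstract for the planar case: first use a pseudorandom hypergraph matching to handle the ``heavy'' subspaces, then delete a few points to handle the remaining ones.

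\textbf{Stage 1: the matching.} Call an $s$-dimensional affine subspace $W$ \emph{heavy} if $|W\cap[n]^d|\ge n^{s}/L$ for a large constant $L=L(d,\eta)$, and \emph{light} otherwise. A standard counting argument shows that each point of $[n]^d$ lies in only $O_{d,L}(1)$ heavy subspaces. The axis-parallel subspaces are heavy, and they form a partition-like structure indexed by $[n]^{d-s}$. We build an auxiliary hypergraph whose vertices are the pairs $(W,j)$ with $W$ heavy and $j\in[k]$, together with the coordinate-subspace ``slots''. Its edges correspond to grid points $x$, each connected to suitable slots of all heavy subspaces through $x$. Concretely, we assign each point randomly to one of the $k$ slots of each heavy subspace containing it, so that edges have bounded size. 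This hypergraph is approximately regular with bounded codegrees.

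We then apply the Ehard--Glock--Joos theorem to obtain an almost-perfect matching $M$ that is pseudorandom with respect to a prescribed family of weight functions. The resulting point set $S_0$ has at most $k$ points on each heavy subspace, and $|S_0|\ge(1-\eta/2)kn^{d-s}$, since there are about $n^{d-s}$ parallel translates of a coordinate $s$-space, each carrying $k$ slots. The trivial case $k\ge n^{s}/L$ must be treated separately; there one can simply take a random set of density $k/n^{s}$ and use concentration, as in Grebennikov--Kwan.

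\textbf{Stage 2: deletion.} The pseudorandomness of $M$ lets us bound, for each light subspace $W$, the expected number of $(k+1)$-subsets of $S_0\cap W$ (more precisely, the number of minimal bad configurations: $k+1$ points spanning an $s$-flat, which is light). The weight functions track $(k+1)$-tuples of points, so that $S_0$ behaves like a random set of density $\approx k/n^{s}$ on such tuples. The count is then roughly
\[
\sum_{W\text{ light}}\binom{|W\cap[n]^d|}{k+1}\Bigl(\frac{k}{n^{s}}\Bigr)^{k+1}.
\]
Using $|W\cap[n]^d|\le n^{s}/L$ and the number of light $s$-flats with a given number of grid points, this sum is $o(kn^{d-s})$. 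This is where $k\ge d+1$ is used: the number of $s$-flats spanned by $k+1$ grid points must be beaten, and since $k+1\ge d+2>s+1$ the exponent works out. We delete one point from each bad configuration.

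\textbf{Main obstacle.} The hardest step is the counting over light subspaces. We need a bound on the number of $s$-flats $W$ with $|W\cap[n]^d|\approx m$ (for each dyadic $m$) that is good enough for the sum to be $\le\eta kn^{d-s}/2$ uniformly in $k$. It also has to be compatible with the limited pseudorandomness provided by Ehard--Glock--Joos, which only controls weight functions on bounded-size tuples with sufficiently large total weight. We will first try splitting according to the lattice determinant of $W$, i.e.\ the volume of a fundamental domain of $W\cap\mathbb{Z}^d$, since $|W\cap[n]^d|\lesssim n^{s}/\det$. Counting flats of determinant about $D$ via dual lattices should give the required decay in $D$, and the exponent $k+1$ should dominate for $k\ge d+1$.
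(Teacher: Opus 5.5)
\textbf{There is a genuine gap in Stage~2.} You propose to bound the $(k+1)$-sets on light flats through Ehard--Glock--Joos weight functions. For $s\ge 2$ the estimate \cref{eq:EGJ} is vacuous for that weight function. Let $w$ count $(k+1)$-sets of grid points lying in a light $s$-flat.

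\begin{itemize}
\item Take a $k$-set $J$ on a line, or in any $(s-1)$-flat. Then $J\cup\{p\}$ lies in a light $s$-flat for all but $O_{\eps}(n^{s})$ points $p\in[n]^d$.
\item Hence $\Delta_k(w)$ is of the order of $e(\cH)$.
\item So the error term $2B_{\Delta}(w)\Delta^{\delta}/\Delta^{k+1}\ge 2\Delta_k(w)\Delta^{\delta-1}$ is a factor $\Delta^{\delta}$ larger than the size $\approx e(\cH)/\Delta\approx kn^{d-s}$ of the matching itself.
\end{itemize}

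Restricting to ``minimal'' configurations does not help, since the same $J$ has the same extensions. The paper notes exactly this: $B_\Delta(w)$ is too large compared to $w(E(\cH))$.

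Separately, EGJ only allows test functions of bounded uniformity $L$. So $(k+1)$-uniform weights are unavailable once $k$ grows with $n$. Your claim of uniformity in $k$ on the range $d+1\le k<n^s/L$ therefore has no mechanism behind it. A finer count of flats by determinant (routine via Schmidt's $O_d(M^d)$ bound and Widmer's point count, as you suggest) addresses neither issue.

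\textbf{The missing idea is to use spreadness rather than test functions for the light flats.} \cref{EGJ-spread} gives a random matching whose projection $S_{\mc M}$ is $(2k/n^s,\Delta^{\gamma})$-spread. Then the expected number of bad configurations inside $S_{\mc M}$ is at most their number times $(2k/n^s)^{\text{size}}$, and one fixes a good outcome. Test functions are needed only for $w_{\size}$ and $w_{\rep}$.

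The paper deletes $(d+2)$-sets lying in translates of light lattices; this suffices because $k\ge d+1$. There are $O_d(\eps n^{d+s(d+1)})$ such sets, so $\EE|S_{\del}|=O_{d,k}(\eps n^{d-s})$. Because of the $k$-dependence, it takes $\eps=c(d,k)\eta$ and cites Grebennikov--Kwan for $k\ge K(d,\eta)$.

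Your $(k+1)$-set version would actually work uniformly in $k$ once combined with spreadness: the first-moment sum is $O_d\big((C_d\eps)^{k-d}n^{d-s}\big)$. However, this needs $k+1\le \Delta^{\gamma}$. So your random-density argument would have to cover all $k\ge C(d,\eta)\log n$, which it can, not just $k\ge n^s/L$.

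Two smaller points:
\begin{itemize}
\item Defining ``heavy'' by number of grid points makes edges of different sizes, while EGJ needs a uniform hypergraph. Take all translates of heavy lattices as vertices, as the paper does, or add dummy vertices.
\item Codegrees are not bounded. They are only at most $n^{s-1}\le\Delta^{1-\delta}$, which is what EGJ actually requires.
\end{itemize}
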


The assumption $k\ge d+1$ in \cref{higher-dimensions} has the same significance as the assumption $k\ge 3$ in \cref{no-k+1-in-line-kn}: the regime $s+1 \le k \le d$ has quite different statistical properties. In fact, it was observed in \cite{lefmann-12,suk-zeng-26} that for some values of $k, d, s$ in this regime, $f_{k, d, s}(n)$ is much smaller than $n^{d-s}$.

\subsection{No-four-on-a-circle problem}
A similar question, attributed to Erd\H{o}s and Purdy (see \cite[F3]{guy-81}), asks for the maximum number of points $f_{\cir}(n)$ that can be placed in the $n \times n$ grid so that no four points lie on the same circle or on the same line. Thiele~\cite{thiele-thesis,thiele-95} proved that $n/4 < f_{\cir}(n) \le 2.5 n - 1.5$, and Dong and Xu~\cite{dong-xu-25} recently used an algebraic construction to improve the lower bound to $n - o(n)$. Independently, Ghosal, Goenka, and Keevash~\cite[Corollary~1.4]{ghosal-goenka-keevash-25} showed that $f_{\cir}(n) \ge 7n / 12$ for large $n$ via a random deletion argument. We improve the lower bound further, demonstrating that $f_{\cir}(n) \ge 2n - o(n)$.

\begin{theorem} \label{no-four-on-a-circle-2n}
  Fix an arbitrary $\eta > 0$, and suppose that $n$ is sufficiently large in terms of $\eta$. Then there exists a set $S \subseteq [n]^2$ of size at least $(2-\eta) n$ that does not contain four points on a circle or on a line.
\end{theorem}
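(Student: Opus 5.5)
We follow the strategy outlined in the abstract for \cref{no-k+1-in-line-kn}, specialised to $k=3$ and with circles added as a second family of forbidden configurations. The plan is to first build a set of size $(2-\eta/2)n$ meeting every ``heavy'' line in at most $3$ points, then remove the few remaining bad configurations by deletion. A heavy line is one of slope $a/b$ with $\max(|a|,|b|)\le L$ for a large constant $L=L(\eta)$. The key statistical observation is that, for a set with about $2$ points per row and column spread roughly uniformly, the expected number of collinear $4$-tuples on light lines is $o(n)$. This count is at most $\sum_{\text{light lines } \ell}(2|\ell\cap[n]^2|/n)^4$. A line of direction $(a,b)$ with $\max(|a|,|b|)=m$ has at most $n/m$ grid points, and there are $O(m\cdot mn)$ such lines with $m$ at this scale. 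The total is therefore about $\sum_{m>L} m^2 n\cdot m^{-4}=O(n/L)$, which is $\le \eta n/10$ for $L$ large.

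\textbf{Step 1 (the heavy-line skeleton).} We set up an auxiliary hypergraph whose vertices are the rows, the columns and the heavy lines, each with capacity: rows and columns capacity $2$, heavy lines capacity $3$. A grid point $p$ corresponds to the edge consisting of its row, its column, and the $O(L^2)$ heavy lines through it. Since every heavy direction partitions $[n]^2$ into lines, we can split capacities by taking $2$ copies of each row and column vertex and $3$ copies of each heavy line, and then pass to a random nearly-regular subhypergraph. We then apply the Ehard--Glock--Joos pseudorandom matching theorem. This yields a set $S_0$ that has at most $2$ points per row and column (hence at most $3$ on axis lines with room to spare) and at most $3$ on each heavy line, with $|S_0|\ge(2-\eta/4)n$. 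The pseudorandomness guarantee is essential: $S_0$ must be ``spread'', in the sense that for any $O(1)$ prescribed light lines or circles its intersection statistics match those of a random set with density $2/n$ per row. Here we may need to handle the codegree and degree regularity carefully. Lines near the corners have fewer points, so we either work on a torus-like weighting or accept degree defects of order $o(1)$ by restricting to the central part of the grid. I expect the same construction as in the proof of \cref{no-k+1-in-line-kn} to be reusable here with capacity $3$ replaced appropriately.

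\textbf{Step 2 (circles and light lines, then deletion).} We count $4$-point configurations in $S_0$ lying on a light line or on a circle, using the spread property, which works like a bounded-moment computation via the matching theorem's weight-function guarantees. For circles, the number of cocircular $4$-tuples in $[n]^2$ with points chosen at density about $1/n$ per point is governed by $\sum_C(|C\cap[n]^2|/n)^4$. Since a circle of radius $r$ has $r^{o(1)}$ lattice points, and there are about $n^3$ relevant circles (centers in a half-integer grid, squared radii up to $n^2$), we must show that this sum is $o(n)$. This is the main obstacle. Naively each of about $n^3$ circles contains on average about $n^2\cdot n/n^3=O(1)$ points of the grid per unit radius, and the fourth-moment sum could be too large. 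Small circles with many lattice points near a concentrated spot, and circles that are heavy because $|C\cap[n]^2|$ is large, must be treated like heavy lines. Our first approach is to use the standard bound that the number of cocircular $4$-tuples in $[n]^2$ is $O(n^{5+o(1)})$, which gives $n^{5+o(1)}/n^4=n^{1+o(1)}$. This is not quite $o(n)$, so we refine it with a dyadic decomposition by the number of lattice points on the circle. Circles with at most $T$ lattice points contribute at most a $T^{-c}$ fraction plus $O(n^{1-c})$, and the very rich circles, which number few, are either included among the constrained ``heavy'' objects in Step 1 or handled by a direct deletion. Once the expected number of bad $4$-tuples is at most $\eta n/4$, we delete one point from each, giving $|S|\ge(2-\eta)n$ with no four points on a line or circle.
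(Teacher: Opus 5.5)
\textbf{There is a genuine gap in Step 2.} The number of cocircular quadruples in $S_0$ is not $o(n)$. This is not an $n^{o(1)}$ or rich-circle issue that a dyadic decomposition could repair.

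The number of cyclic quadrilaterals in $[n]^2$ is genuinely $\Theta(n^5)$, and the bulk of them are isosceles trapezia whose parallel sides point in a \emph{heavy} direction; horizontal ones alone number about $n^5/12$. Multiplying by $(2/n)^4$ gives $\Theta(n)$ bad quadruples, with a constant that does not shrink as $L\to\infty$.

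Concretely, suppose most rows contain two pseudorandomly placed points. Each row gives one pair, whose midpoint is one of about $2n$ half-integers. Two rows with equal midpoints span a horizontal isosceles trapezium, and the collision probability is about $2/(3n)$. So there are already about $\binom{n}{2}\cdot\frac{2}{3n}\approx n/3$ horizontal trapezia, as many vertical ones, and more from diagonals and other heavy directions. Your deletion bound is therefore $\Omega(n)$, not $\eta n$.

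Splitting by $|C\cap[n]^2|$ does not help. These trapezia typically lie on circles with very few lattice points, because their centres are rationals with large denominators rather than half-integer points; this also makes your count of ``$n^3$ relevant circles'' wrong. Nor can circles be put into the matching hypergraph as capacity-constrained vertices. Each point lies on polynomially many circles, and the constraint on a four-point circle is genuinely $4$-ary. Your light-line estimate $O(n/L)$ is fine; the problem is only with circles.

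\textbf{The missing idea is to forbid heavy isosceles trapezia inside the construction itself.} The paper splits cocircular quadruples into three classes:
\begin{enumerate}
\item non-trapezia, of which there are only $n^{4+18/29+o(1)}$ by the Huxley--Konyagin estimate (\cref{asymmetric-count});
\item $\eps$-light trapezia, of which there are $O(\eps n^5)$;
\item $\eps$-heavy trapezia.
\end{enumerate}
The first two classes, together with light-line quadruples, are removed by spread-based deletion exactly as you propose.

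The heavy ones are excluded by a two-stage matching. First take $S_1$ with at most one point on each $\eps$-heavy line. Then each $p\notin S_1$ determines, for each heavy direction $\dir$, the unique $p_{\dir}\in S_1\cap L_{\dir}(p)$. Take $S_2$ to be a matching in a hypergraph whose edge for $p$ contains both $L_{\dir}(p)$ and the bisector $\ell^{\perp}(p,p_{\dir})$ (or a dummy vertex if $p_{\dir}$ does not exist). Using each bisector at most once says precisely that no two heavy parallel sides in $S_1\cup S_2$ share a perpendicular bisector.

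This is why heavy lines carry one point of $S_1$ plus one of $S_2$, rather than three points as in your Step 1. It makes every heavy parallel side of the form $\{p,p_{\dir}\}$, so the $4$-point constraint becomes a capacity-one vertex constraint of bounded uniformity. The codegree hypothesis for these bisector vertices is what forces the extra reflection conditions on $S_1$ in \cref{first-stage}(c),(d).
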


We do not believe that this bound is asymptotically sharp: in fact, using our methods (with some additional work), one should be able to show that $f_{\cir}(n) \ge (2+\lambda) n$ for some absolute constant $\lambda > 0$ and all large $n$ (see \cref{rmk:third-stage}). However, in the interest of keeping this paper short and simple, we do not pursue this here.

\begin{remark}
  To try to learn more about the problem, we ran some experiments with Google DeepMind's tool \emph{AlphaEvolve}~\cite{alphaevolve}, which uses an evolutionary search algorithm to produce point sets certifying lower bounds on $f_{\cir}(n)$. In \cref{sec:alphaevolve}, we present the results of these experiments, which weakly suggest that $f_{\cir}(n)/n$ might tend to a limit strictly between $2$ and $2.5$ as $n\to\infty$. We would like to thank Adam Zsolt Wagner for providing us with access to AlphaEvolve and personally assisting us in running many experiments. 
\end{remark}

\subsection{Proof ideas} \label{subsec:proof-ideas}
\cref{no-k+1-in-line-kn,higher-dimensions,no-four-on-a-circle-2n} are proved using similar methods, related to hypergraph matchings and random processes.
In this subsection, we focus on the proof of \cref{no-k+1-in-line-kn}, which is by far the most involved, and discuss aspects specific to \cref{higher-dimensions,no-four-on-a-circle-2n} where appropriate.

Our proof of \cref{no-k+1-in-line-kn} consists of two conceptual parts:
first, we construct a no-$(k+1)$-in-line set of size $kn - o(n)$ from a pseudorandom matching in a suitable hypergraph, and then we perform a sequence of $o(n)$ local modifications (switches) to obtain a no-$(k+1)$-in-line set of size exactly $kn$.
A similar approach was used by Simkin and Luria~\cite{luria-simkin-22} (and later in \cite{bowtell-keevash-21,simkin-23}) to obtain a lower bound on the number of sets $S \subseteq [n]^2$ of size $n$ containing at most one point in each row, column, and diagonal (also known as \emph{$n$-queens configurations}). While there are apparent similarities between the $n$-queens problem and the no-$(k+1)$-in-line problem, the actual implementation of this strategy in our setting requires several new ideas.

Since for $k \ge 10^{37}$ the desired statement is proved in \cite{grebennikov-kwan-25} (via a rather different approach), we focus on the case when $k$ is a fixed constant. Pick $\eps > 0$ sufficiently small in terms of $k$, and assume that $n$ is sufficiently large in terms of $k$ and $\eps$. 

\textbf{Approximate constructions via hypergraph matchings.} We begin by constructing a set $S_0 \subseteq [n]^2$ of size $kn - O_k(\eps n)$ that contains at most $k$ points on each ``$\eps$-heavy'' line, i.e.\ on every line with direction vector $(a, b) \in \ZZ^2$ such that $|a|, |b| \le 1/\eps$. Viewing each such line as a vertex and each point of $[n]^2$ as an edge incident to the lines that contain it, this reduces to finding an almost-perfect matching in (the $k$-blow-up of) a certain $O(1/\eps^2)$-uniform hypergraph. The existence of such a matching follows from the classical theorem of Pippenger and Spencer~\cite{pippenger-spencer-89} (proved using the celebrated \emph{R\"odl nibble} technique), and a result of Ehard, Glock, and Joos~\cite{ehard-glock-joos-20} further ensures that this matching can be taken to be pseudorandom (i.e., behaving similarly to an independent random subset of the edges of our hypergraph, in a suitable sense).

This pseudorandomness condition allows us to handle the remaining ``$\eps$-light'' lines via a crude deletion argument. Namely, it implies that the number of $(k+1)$-tuples of points of $S_0$ lying on the same $\eps$-light line is $O_k(\eps n)$, and hence deleting one point from each such tuple concludes the proof.

\begin{remark} \label{rmk:k-illustration}
  The final deletion step crucially relies on the assumption $k \ge 3$: under this assumption, it turns out that the problem is dominated by the few heaviest line directions. To give some intuition for this, suppose that $S_0$ is obtained not from a pseudorandom matching but by including each point of $[n]^2$ independently with probability $k/n$. Then the expected number of $(k+1)$-tuples in $S_0$ lying on the same $\eps$-light line is at most
  \[
  n^2 \cdot (k/n)^{k+1} \cdot \sum_{a, b} \left(\frac{n}{\max(|a|, |b|)} + 1\right)^k,  
  \]
  where the sum is over all possible slopes $a/b$ of $\eps$-light lines. It is not hard to check that this expression is $O_k(n \cdot \sum_{m = \lfloor 1/\eps \rfloor + 1}^{n} 1/m^{k-1})$. If $k \ge 3$, then the sum here is a tail of a convergent series: heuristically, this means that $\eps$-light lines contribute very little compared to $\eps$-heavy lines.
  On the other hand, if $k = 2$, then this sum is about $\log n$ (independently of $\eps$), and each ``dyadic scale'' of $m$ contributes to the sum equally.
\end{remark}

\begin{remark} \label{rmk:no-4-on-a-circle}
  \cref{higher-dimensions} is proved using an argument very similar to the one described above (in general, we need to consider primitive lattices in place of direction vectors). To prove \cref{no-four-on-a-circle-2n}, we first use an estimate by Huxley and Konyagin \cite{huxley-konyagin-09}, which implies that almost all cyclic quadrilaterals in the grid are isosceles trapezia. Then, we use a two-stage hypergraph matching argument. In the first stage, we use a pseudorandom matching in a suitable hypergraph to construct a point set $S_1$ of size $n - o(n)$ that contains at most one point on each $\eps$-heavy line. In the second stage, we construct a point set $S_2$ of size $n - o(n)$ that contains at most one point on each $\eps$-heavy line and such that $S_1 \cup S_2$ contains no isosceles trapezia with $\eps$-heavy parallel sides (again from a pseudorandom matching but in a different hypergraph depending on $S_1$). Finally, we delete one point from each of the remaining forbidden configurations in $S_1 \cup S_2$: isosceles trapezia with $\eps$-light parallel sides, cyclic quadrilaterals that are not isosceles trapezia, and quadruples of points on the same $\eps$-light line. 
\end{remark}

\textbf{Completion procedure.} Let $T = O_k(\eps n)$ be the ``size of the defect'': i.e., $kn$ minus the size of the approximate configuration $S_{\init}$ produced by the first part of the argument. Let $(c_1, \ldots, c_T)$ and $(r_1, \ldots, r_T)$ be the multisets of indices of columns and rows that remain ``unsaturated'' (i.e., the number of times a column index appears in the sequence $(c_1, \ldots, c_T)$ is $k-\#\{$points of $S_{\init}$ in this column$\}$, and similarly for row indices). Our goal is to perform $T$ ``switches'', as in the work of Simkin and Luria~\cite{luria-simkin-22}, such that the $t$-th switch increases the number of selected points in both the column and the row of $(c_t, r_t)$ by one while keeping the number of selected points in other rows and columns unchanged. 

Specifically, we say a point $(x, y)$ in the current set of selected points $S_{t-1}$ is an \emph{absorber} for some point $(c, r)$ if the set $S_t(x, y) \defeq S_{t-1} \setminus \{(x, y)\} \cup \{(x, r), (c, y)\}$ contains at most $k$ points on each line\footnote{This is a simplification: in the actual definition of absorbers (\cref{def:absorber}), we require a slightly stronger condition.}. Then, at step $t$, we pick a uniformly random absorber $(x, y)$ and set $S_t \defeq S_t(x, y)$. The key proposition (\cref{many-absorbers}) then states that, with high probability, at each step $t \le T$ we have $\Omega(n)$ absorbers to choose from. 

The proof of \cref{many-absorbers} combines the pseudorandomness of the initial configuration $S_{\init}$ and the randomness of the completion procedure. To make these two notions compatible, we need to extend the Ehard--Glock--Joos result slightly, so that it also allows $S_{\init}$ itself to be a ``spread'' random set, thus reusing some randomness of the first part of the proof. Using the pseudorandomness of $S_{\init}$, we show that it contains many absorbers\footnote{Strictly speaking, at this stage we only work with ``absorbers with respect to $\eps$-heavy lines''.} for every point $(c, r) \in [n]^2$. To bound the number of absorbers for $(c_t, r_t)$ that are ``lost'' during the completion procedure (up to step $t$), we use moment-based arguments. In turn, the relevant moment estimate follows from the ``spreadness'' of $S_{\init}$ and of an auxiliary random set $S_{\half}$ that includes a random one of the two points added at each of the previous switching steps. 

\begin{remark}
  To establish an exact bound in \cref{higher-dimensions} (analogous to \cref{no-k+1-in-line-kn}) using this approach, one would need a suitable version of the completion procedure for higher dimensions. Note that a subset of $[n]^d$ that contains exactly $k$ points in each axis-aligned affine subspace of dimension $s$ corresponds to a (multipartite) $(n, d, d-s, k)$-design, and one would need to transform a pseudorandom configuration of $(1-o(1))kn^{d-s}$ points into such a design via local modifications. 
  This might be related to the challenging problem of finding designs inside Erd\H{o}s--R\'enyi random hypergraphs of appropriate density (see \cite{DKP-26,jain-pham-24,KKKMO-23,keevash-22,SSS-23} for partial results in this direction).
\end{remark}

\subsection{Organisation of the paper} In \cref{sec:preliminaries}, we state our main technical tool: \cref{EGJ-spread} about pseudorandom hypergraph matchings (we formally deduce it from \cite{ehard-glock-joos-20} in \cref{sec:appendix}). The two parts of the proof of \cref{no-k+1-in-line-kn} (the approximate construction and the completion procedure) appear in \cref{sec:approximate,sec:completion}, respectively.
In \cref{sec:higher-dimensions}, we prove our higher-dimensional result (\cref{higher-dimensions}), and in \cref{sec:no-four-on-a-circle} we prove our bound for the no-four-on-a-circle problem (\cref{no-four-on-a-circle-2n}). 

\subsection*{Notation}

For a positive integer $n$, we write $[n]=\{1,\dots,n\}$. For a set $X$ and a positive integer $\ell$, we write $\binom{X}{\ell}$ for the collection of all subsets of $X$ of size $\ell$. We write $a \pm b$ to denote a quantity that differs from $a$ by at most $b$. We sometimes omit floor and ceiling symbols and assume large numbers are integers, when divisibility considerations are not important.
For a hypergraph $\cH$, we write $V(\cH)$ and $E(\cH)$ for its vertex set and edge set, respectively. Also, we write $\Delta(\cH)$ for its maximum degree and $\Delta_2(\cH)$ for its maximum codegree (i.e., the maximum number of edges containing a fixed pair of vertices).
For functions $f=f(n)$ and $g=g(n)$, we write $f=O(g)$ to mean that there is a constant $C$ such that $|f(n)|\le C|g(n)|$ for sufficiently large $n$. Similarly, we write $f=\Omega(g)$ to mean that there is a constant $c>0$ such that $f(n)\ge c|g(n)|$ for sufficiently large $n$. Finally, we write $f=o(g)$ to mean that $f(n)/g(n)\to0$ as $n\to\infty$. Subscripts on asymptotic notation indicate quantities that should be treated as constants. 

\subsection*{Acknowledgements}

The authors thank Rob Morris for helpful conversations. The first author is supported by the Clarendon Fund and Oxford Ryniker Lloyd Graduate Scholarship. The second author is supported by a joint Clarendon Fund and Exeter College SKP scholarship. The fourth author is supported by ERC Advanced Grant 883810. The third and the fifth authors are supported by ERC Starting Grant ``RANDSTRUCT'' No.~101076777. The sixth author is supported by a Clay Research Fellowship and NSF grant DMS-2543870.

\section{Preliminaries}
\label{sec:preliminaries}

The classical \emph{R\"odl nibble} technique can be used to prove that a regular hypergraph of degree $\Delta$ with small codegrees has a matching that covers a $1-o_{\Delta \to \infty}(1)$ fraction of its vertices (see \cite{pippenger-spencer-89}). Further refinements of this method \cite{alon-yuster-05,ehard-glock-joos-20,glock-joos-kim-kuhn-lichev-24} show that, in addition, one can demand that this matching is pseudorandom, in the sense that it shares certain statistical properties with the set obtained by including each edge independently with probability $1/\Delta$. \cref{EGJ-spread} below (essentially due to Ehard, Glock, and Joos~\cite{ehard-glock-joos-20}) is one such result that we will use heavily in this paper.  

\begin{definition}
  For a finite set $X$ and $\ell \in \NN$, an \emph{$\ell$-uniform test function\footnote{We use the terminology of \cite{glock-joos-kim-kuhn-lichev-24}; in \cite{ehard-glock-joos-20} these are called \emph{$\ell$-tuple weight functions}.} on~$X$} is a function $w: \binom{X}{\ell}\to \mathbb{R}_{\ge 0}$. For an arbitrary set $X' \subseteq X$, we write $w(X') \defeq \sum_{Y \in \binom{X'}{\ell}} w(Y)$.
  For $j \in [\ell]$ and $N \in \NN$, define 
  \[
  \Delta_j(w) \defeq \max_{|J| = j} \sum_{Y \supseteq J, |Y| = \ell} w(Y) \qquad \text{ and } \qquad B_N(w) \defeq \max_{j \in [\ell]} \Delta_j(w) N^j.
  \]
  Also, we say that a random subset $S$ of $X$ is \emph{$(q, C)$-spread} if for every set $Y \subseteq X$ of size at most $C$ we have $\PP[Y \subseteq S] \le q^{|Y|}$.

  For us, $X$ will always be the edge set $E(\cH)$ of some hypergraph. We say that an $\ell$-uniform test function $w$ on $E(\cH)$ is \emph{clean} if $w(E)=0$ whenever $E \in \binom{E(\mathcal{H})}{\ell}$ is not a matching. For an arbitrary test function $w$ on $E(\cH)$, let $\tilde{w}$ be its \emph{cleaning} defined by $\tilde{w}(E) = w(E)$ if $E$ is a matching and $\tilde{w}(E) = 0$ otherwise.
\end{definition}

\begin{lemma}
\label{EGJ-spread}
    Fix $\delta \in (0, 1)$ and $r, L \in \NN$ with $r \ge 2$. Let $\gamma \defeq \delta/(100L^2 r^2)$, and let $\Delta$ be sufficiently large in terms of $\delta, r, L$. Let $\cH$ be an $r$-uniform hypergraph with $\Delta(\cH) \le \Delta$, $\Delta_2(\cH) \le \Delta^{1-\delta}$, and $e(\cH) \le \exp(\Delta^{\gamma^2})$. Suppose that for each $\ell \in [L]$ we are given a set of clean $\ell$-uniform test functions $\mc{W}_{\ell}$ on $E(\cH)$ of size at most $\exp(\Delta^{\gamma^2})$. Then there exists a $((1 + \Delta^{-\gamma})/\Delta, \Delta^{\gamma})$-spread random matching $\mc{M}$ in $\cH$ that always satisfies
    \begin{equation}
    \label{eq:EGJ}
        w(\mc{M}) = \frac{(1 \pm \Delta^{-\gamma})w(E(\cH)) \pm 2B_{\Delta}(w) \Delta^{\delta}}{\Delta^{\ell}}
    \end{equation}
    for each $\ell \in [L]$ and $w \in \mc{W}_{\ell}$.
\end{lemma}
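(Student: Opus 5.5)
We deduce \cref{EGJ-spread} from the main theorem of Ehard, Glock, and Joos~\cite{ehard-glock-joos-20}. That theorem gives, for a hypergraph with the stated degree and codegree bounds and a family of at most $\exp(\Delta^{\gamma^2})$ clean test functions, a single matching $\mc{M}$ for which \cref{eq:EGJ} holds for every test function in the family. Our only new requirement is that $\mc{M}$ can be taken to be a spread random matching.

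The plan is to encode spreadness itself as a collection of test functions, and then randomise using the symmetry of the conclusion.

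\textbf{Encoding spreadness.} For each $\ell\le \Delta^\gamma$ and each matching $Y$ of size $\ell$, consider the indicator test function $w_Y = \mathbbm{1}_Y$. These are too many functions, and $B_\Delta(w_Y)=\Delta^\ell$ is far too large for \cref{eq:EGJ} to be useful. So we cannot enforce spreadness deterministically.

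\textbf{Randomising.} Instead, take $\mc{M}$ to be a uniformly random matching from the set $\mathcal{G}$ of all matchings satisfying \cref{eq:EGJ} for the given families $\mc{W}_\ell$. Alternatively, use a random "pre-selection" trick, described next, which is what I would actually try.

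\textbf{Pre-selection.} First sample a random subhypergraph $\cH'$ by keeping each edge independently with probability $p=\Delta^{-\beta}$, for a small constant $\beta$. By Chernoff bounds together with a union bound over the $\exp(\Delta^{\gamma^2})$ test functions, with high probability:
\begin{itemize}
\item $\cH'$ has maximum degree at most $p\Delta(1+\Delta^{-\Omega(1)})$;
\item the codegrees of $\cH'$ remain small;
\item each restricted test function satisfies $w|_{\cH'}(E(\cH')) \approx p^\ell w(E(\cH))$, with error controlled by $B$.
\end{itemize}
We then apply the EGJ theorem to $\cH'$ with degree parameter $\Delta'=p\Delta$, and let $\mc{M}$ be the resulting (conditionally deterministic) matching of $\cH'$.

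Spreadness then comes from the pre-selection step alone. Conditioning on the good event costs at most a factor $1+o(1)$, so
\[
\PP[Y\subseteq \mc{M}]\le \PP[Y\subseteq E(\cH')]/\PP[\text{good}] \le p^{|Y|}(1+o(1)).
\]
This gives spread parameter $p$, which is much weaker than the required $(1+\Delta^{-\gamma})/\Delta$.

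\textbf{Fixing the spread parameter.} To reach $1/\Delta$, we instead iterate the pre-selection nibble-style, or better, exploit that the EGJ proof is itself a random greedy/nibble process. In that process each edge is chosen with probability at most $(1+o(1))/\Delta$, and sets of $\ell$ disjoint edges are chosen with probability at most $((1+o(1))/\Delta)^\ell$. Auditing the EGJ argument for this property is the honest route. Concretely, their matching is constructed by a random process in which, conditional on the history, each edge joins with probability $\le (1+\Delta^{-\gamma})/\Delta$ per unit of total time. The spreadness bound for a set $Y$ of size at most $\Delta^\gamma$ then follows by a martingale/product argument over the rounds.

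\textbf{Final step.} We show the failure probability of \cref{eq:EGJ} is at most $\exp(-\Delta^{\Omega(\gamma)})$. Conditioning on success therefore changes $\PP[Y\subseteq\mc{M}]$ by only a factor $1+o(\Delta^{-\gamma})$ per element, using $|Y|\le \Delta^\gamma$ and the bound $\PP[\text{success}]\ge 1/2$.

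\textbf{Main obstacle.} The main obstacle is this audit: verifying that the probability bound for $Y$ holds with per-edge factor $(1+\Delta^{-\gamma})/\Delta$ uniformly across all $|Y|\le\Delta^\gamma$. Equivalently, we must show the edges of $Y$ are not positively correlated beyond that factor in the nibble. I would first try to handle this by bounding, at each round, the conditional probability of selecting each remaining edge of $Y$ given the history, and then multiplying.
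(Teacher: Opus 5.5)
The overall shape of your plan matches the paper's: open up the Ehard--Glock--Joos construction, bound $\PP[Y\subseteq\mc M]$ for its unconditioned random output, then condition on the success event. But the step that carries all the content is left as an unperformed ``audit'' of a process EGJ do not use. Their matching is not a nibble with per-round conditional selection probabilities, so your martingale/product argument has no rounds to run over. It is produced in one shot:
\begin{enumerate}
\item take a uniformly random vertex partition $V(\cH)=V_1\sqcup\dots\sqcup V_P$;
\item sparsify each $\cH[V_i]$ independently with probability $1/Q$, giving $\cH_i$;
\item decompose each $\cH_i$ into $M$ matchings via Molloy--Reed;
\item in each part, choose one of these $M$ matchings independently and uniformly at random.
\end{enumerate}
The correlation you flag is real here. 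Edges of $Y$ landing in the same part survive together with probability at most $1/M$ (a single colour choice), not $M^{-2}$, so a per-edge product bound is false.

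What resolves this is a union bound over the map $f:[N]\to[P]$ recording which part contains each $e_i\in Y=\{e_1,\dots,e_N\}$. This gives $\PP[Y\subseteq\mc M]\le P^{-Nr}Q^{-N}\sum_f M^{-|f([N])|}$. Grouping by $N_0=|f([N])|$ and using $S(N,N_0)\le N^{2(N-N_0)}$ yields $(P^{r-1}QM)^{-N}\sum_{N_1\ge0}(N^2M/P)^{N_1}$. The parameters are chosen so that $P^{r-1}QM=(1+4\Delta^{-2\gamma})\Delta$ and $N^2M/P=\Delta^{-\Omega(Lr\gamma)}$ for $N\le\Delta^\gamma$. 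Thus co-location is rare enough to pay for the correlation.

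Dividing by $\PP[\text{success}]\ge1-\exp(-\Delta^{\gamma/2})$ then finishes the spreadness bound. Note this is a single global factor that must be absorbed even when $|Y|=1$. So it must be $1+o(\Delta^{-\gamma})$, and your remark that $\PP[\text{success}]\ge1/2$ suffices is wrong.

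Second, you assume EGJ already give \eqref{eq:EGJ} for arbitrary clean test functions. They do not. Their theorem requires $w(E(\cH))$ to be large compared with $B_\Delta(w)$; the paper's restricted version uses $w(E(\cH))\ge B_\Delta(w)\Delta^{\delta}$. It then gives only a multiplicative error. The additive term $\pm2B_\Delta(w)\Delta^\delta$ is a genuine extension, and it is what allows small-weight test functions such as $w_{\rep}$. The paper obtains it with phantom edges:
\begin{enumerate}
\item Add a vertex-disjoint matching $F$ of $s=\lceil2L\Delta^{1+\delta}\rceil$ new edges; degrees and codegrees are unaffected.
\item Extend each $w$ to $\hat w$ by spreading weight $R_w=\max\{w(E(\cH)),B_\Delta(w)\Delta^\delta\}-w(E(\cH))$ uniformly over $\binom{F}{\ell}$. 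The $j$-degrees from $F$ are at most $R_w\Delta^{-j-\delta}$, so $\hat w(E(\cH^*))\ge B_\Delta(\hat w)\Delta^\delta$.
\item Apply the restricted spread version with $2\gamma$ in place of $\gamma$. This is why $\gamma=\delta/(100L^2r^2)$ rather than $\delta/(50L^2r^2)$.
\item Take $\mc M=\mc M^*\cap E(\cH)$, which inherits spreadness. When $R_w>0$, one has $0\le w(\mc M)\le\hat w(\mc M^*)\le2B_\Delta(w)\Delta^\delta/\Delta^\ell$, which gives \eqref{eq:EGJ}.
\end{enumerate}
Without this reduction, or a substitute for it, your argument would not reach the stated lemma even if the spreadness audit were completed.
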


This is a slight extension of \cite[Theorem 1.3]{ehard-glock-joos-20} which follows from the same construction, so we defer the proof to \cref{sec:appendix}.
The main new feature of \cref{EGJ-spread} (which is implicit in \cite{ehard-glock-joos-20} but made explicit here) is the spreadness of the resulting random matching $\mc{M}$. It has the following useful consequence: for an \emph{arbitrary} additional $\ell$-uniform test function $w_0$ with $\ell \le \Delta^{\gamma}$ (possibly, such that $B_{\Delta}(w_0)$ is large compared to $w_0(E(\cH))$), we have 
\[
\EE[w_0(\mc{M})] \le \left(\frac{1 + \Delta^{-\gamma}}{\Delta}\right)^{\ell} w_0(E(\cH)),
\]
and hence by Markov's inequality $w_0(\mc{M}) = O(w_0(E(\cH)) / \Delta^{\ell})$ with probability at least $1/2$.

We also record a simple but extremely convenient fact about spread random sets.

\begin{fact} \label{spread-union}
  Let $S_1,S_2$ be coupled random subsets of $X$, such that:
  \begin{itemize}
      \item $S_1$ is $(q_1, C_1)$-spread, and
      \item if we condition on any outcome of $S_1$, then the conditional distribution of $S_2$ is $(q_2, C_2)$-spread.
  \end{itemize}
  Then $S_1 \cup S_2$ is $(q_1 + q_2, \min(C_1, C_2))$-spread.
\end{fact}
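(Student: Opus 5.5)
We prove \cref{spread-union} by a direct union bound over the ways a small set $Y$ can be split between $S_1$ and $S_2$. Fix $Y \subseteq X$ with $|Y| \le \min(C_1, C_2)$. If $Y \subseteq S_1 \cup S_2$, then there is a partition $Y = Y_1 \sqcup Y_2$ with $Y_1 \subseteq S_1$ and $Y_2 \subseteq S_2$; for instance, take $Y_1 = Y \cap S_1$ and $Y_2 = Y \setminus S_1$. Summing over all $2^{|Y|}$ ordered partitions gives
\[
\PP[Y \subseteq S_1 \cup S_2] \le \sum_{Y_1 \sqcup Y_2 = Y} \PP[Y_1 \subseteq S_1 \text{ and } Y_2 \subseteq S_2].
\]

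Each term is bounded by conditioning on $S_1$. For a fixed partition,
\[
\PP[Y_1 \subseteq S_1,\ Y_2 \subseteq S_2] = \EE\bigl[\mathbbm{1}_{Y_1 \subseteq S_1} \, \PP[Y_2 \subseteq S_2 \mid S_1]\bigr].
\]
Since $|Y_2| \le |Y| \le C_2$, the conditional spreadness of $S_2$ gives $\PP[Y_2 \subseteq S_2 \mid S_1] \le q_2^{|Y_2|}$ for every outcome of $S_1$. Since $|Y_1| \le C_1$, the spreadness of $S_1$ gives $\PP[Y_1 \subseteq S_1] \le q_1^{|Y_1|}$. Together these bound the term by $q_1^{|Y_1|} q_2^{|Y_2|}$. (If one prefers to avoid conditioning on measure-zero events, note that $X$ is finite, so $S_1$ takes finitely many values and the conditioning is elementary.)

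Summing over partitions, the binomial theorem gives
\[
\sum_{Y_1 \sqcup Y_2 = Y} q_1^{|Y_1|} q_2^{|Y_2|} = (q_1 + q_2)^{|Y|}.
\]
Hence $S_1 \cup S_2$ is $(q_1+q_2, \min(C_1,C_2))$-spread.

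The only step needing care is the need for both size constraints $|Y_1| \le C_1$ and $|Y_2| \le C_2$. This is exactly why the spread parameter of the union is $\min(C_1, C_2)$; beyond that, the argument is routine.
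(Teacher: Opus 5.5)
Your proposal is correct and is essentially the paper's own proof: a union bound over the splittings $Y = Y_1 \sqcup Y_2$, conditioning on $S_1$ to bound each term by $q_1^{|Y_1|} q_2^{|Y_2|}$, and then the binomial theorem to obtain $(q_1+q_2)^{|Y|}$. Your remark that both $|Y_1| \le C_1$ and $|Y_2| \le C_2$ follow from $|Y| \le \min(C_1, C_2)$ is exactly the point the paper relies on implicitly.
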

\begin{proof}
  Consider a set $Y \subseteq X$ of size at most $\min(C_1, C_2)$. Then,   
  \begin{align*}
  \PP[Y \subseteq S_1 \cup S_2] \le \sum_{Y_1 \sqcup Y_2 = Y} \EE[\mathbf{1}_{Y_1 \subseteq S_1} \cdot \PP[Y_2 \subseteq S_2 \mid S_1]] &\le \sum_{Y_1 \sqcup Y_2 = Y} \PP[Y_1 \subseteq S_1] \cdot q_2^{|Y_2|} \\
  &\le \sum_{Y_1 \sqcup Y_2 = Y} q_1^{|Y_1|} q_2^{|Y_2|} = (q_1 + q_2)^{|Y|}. \qedhere
  \end{align*}
\end{proof}

\section{Approximate construction}
\label{sec:approximate}

In this section we give a short proof of the following approximate version of \cref{no-k+1-in-line-kn}. This constitutes the first part of the proof of \cref{no-k+1-in-line-kn}, and also serves as an illustration of our approach.

\begin{proposition} \label{no-k+1-in-line-approximate}
  Fix an arbitrary $\eta > 0$, and let $n, k$ be integers such that $k \ge 3$ and $n$ is sufficiently large in terms of $k$ and $\eta$. Then there exists a set $S \subseteq [n]^2$ of size at least $(1-\eta)kn$ such that every Euclidean line contains at most $k$ points of $S$. 
\end{proposition}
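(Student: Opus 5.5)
We follow the outline in \cref{subsec:proof-ideas}. Fix $\eps>0$ small in terms of $k$ and $\eta$. Call a primitive direction $(a,b)\in\ZZ^2$ (normalised, say $b>0$ or $(a,b)=(1,0)$) \emph{$\eps$-heavy} if $|a|,|b|\le 1/\eps$, and call a line heavy if its direction is heavy. There are $D=O(\eps^{-2})$ heavy directions.

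We build the hypergraph $\cH$ as follows. Its vertices are pairs $(L,i)$, where $L$ is a heavy line meeting $[n]^2$ and $i\in[k]$; these $k$ copies of each line give the $k$-blow-up. Its edges are indexed by pairs $(p,\sigma)$, where $p\in[n]^2$ and $\sigma$ assigns a copy index to each of the $D$ heavy lines through $p$. Each edge is therefore $D$-uniform. A matching in $\cH$ projects to a set $S_0$ with at most $k$ points on each heavy line, but the degrees here are $k^{D-1}$ times the number of points on the line, which is not regular. To get around this, a simpler option is to use all copy choices with multiplicity and instead pad to approximate regularity. Concretely, a line of direction $(a,b)$ contains at most about $n/\max(|a|,|b|)$ points, so row and column copies have the largest degree $\approx n k^{D-1}$. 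We add dummy vertices and edges so that every vertex has degree $\Delta\approx nk^{D-1}$; the dummy edges carry no points, so rows and columns are unaffected. Codegrees are bounded since two distinct lines share at most one point, which gives $\Delta_2\le k^{D-2}\ll\Delta^{1-\delta}$. With a test function counting real edges, \cref{EGJ-spread} yields a matching $\mc M$ whose number of real edges is $(1\pm o(1))\cdot n^2k^{D}/\Delta\approx kn$. This gives $|S_0|\ge(1-\eta/2)kn$. (This step needs care in choosing the padding and the uniformity, but it is routine.)

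Next we handle the light lines. For each light direction $(a,b)$ with $m=\max(|a|,|b|)>1/\eps$, and each line $L$ of that direction, we count $(k+1)$-subsets of $L\cap[n]^2$. Let $w_0$ be the $(k+1)$-uniform test function that assigns weight $1$ to each $(k+1)$-set of real edges whose points lie on a common light line, summed over all light lines. Then $w_0(E(\cH))$ is at most $(k^{D-1})^{k+1}$ times the number of such point tuples. A direction with parameter $m$ gives $O(n^2/m)$ lines hit, each with at most $n/m+1$ points, and there are $O(m)$ directions with a given $m$. The number of tuples is therefore $O_k\big(\sum_{m>1/\eps}m\cdot(n^2/m)(n/m)^{k}\big)+$ (terms with $m>n$, where lines have few points). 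For $m\le n$ this is $O_k(n^{k+2}\sum_{m>1/\eps}m^{-k})=O_k(\eps^{k-1}n^{k+2})$. Here $k\ge3$ is used: the sum over $m\le n$ converges with tail $O(\eps^{k-1})$. For $m>n$ the lines contain at most $2$ points, so they contribute nothing once $k\ge2$. By the spreadness of $\mc M$ (the $\ell=k+1\le\Delta^\gamma$ remark after \cref{EGJ-spread}) and Markov's inequality, with probability at least $1/2$ we get $w_0(\mc M)=O(w_0(E(\cH))/\Delta^{k+1})=O_k(\eps^{k-1}n^{k+2}/n^{k+1})=O_k(\eps^{k-1} n)$.

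Finally we delete one point from each bad $(k+1)$-tuple. This leaves a set $S$ with at most $k$ points on every heavy and every light line, of size at least $(1-\eta/2)kn-O_k(\eps^{k-1}n)\ge(1-\eta)kn$ for small $\eps$.

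The main obstacle is the first step: making $\cH$ near-regular with the right degrees while keeping the codegree small. That is the only place where the inhomogeneous line lengths cause real trouble. If the padding approach proves awkward, the first fallback is to weight by deleting points near the boundary so that heavy lines are uniformly long. The light-line estimate is comparatively mechanical once spreadness is in hand.
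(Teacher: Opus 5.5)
Your strategy is the paper's: a matching from \cref{EGJ-spread} in a $k$-fold blow-up of the heavy-line hypergraph, followed by deleting one point from each bad tuple on a light line. However, the light-line estimate, which is the one place where $k\ge 3$ matters, is miscalculated.

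\textbf{The light-line count.} For a direction $\dir$ with $\|\dir\|_\infty=m\le n$ there are $\Theta(mn)$ lines in direction $\dir$ meeting $[n]^2$, each with up to $n/m+1$ points. Choosing a grid point and then $k$ further points on its line gives $\sum_L\binom{|L|}{k+1}\le n^2(2n/m)^k$. This is genuinely of order $n^{k+2}m^{-k}$, a factor $m$ larger than your $(n^2/m)(n/m)^k$. Summing over the at most $4m$ directions with $\|\dir\|_\infty=m$ gives $O_k\bigl(n^{k+2}\sum_{m>1/\eps}m^{1-k}\bigr)=O_k(\eps^{k-2}n^{k+2})$, not $O_k(\eps^{k-1}n^{k+2})$.

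Your sum $\sum_m m^{-k}$ already converges for $k=2$. Run at $k=2$, your argument would produce a no-three-in-line set of size $(2-o(1))n$, which is an open problem; that should have been a warning sign. The correct exponent $m^{1-k}$ is exactly where $k\ge 3$ enters (compare \cref{rmk:k-illustration}). The corrected bound still gives $w_0(\mc M)=O_k(\eps^{k-2}n)$, which suffices, so the step survives after repair. (Also, each point carries $k^D$ edges, not $k^{D-1}$; this only affects constants.)

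\textbf{Repeated points.} In your product blow-up a point $p$ can occur in $\mc M$ more than once, as $(p,\sigma)$ and $(p,\sigma')$ with $\sigma,\sigma'$ disagreeing on every line. So the number of edges of $\mc M$ does not by itself give the lower bound on $|S_0|$. You need something like the paper's $w_{\rep}$, or spreadness: the expected number of such pairs is at most $(2/\Delta)^2n^2k^{2D}=4k^2$.

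\textbf{Padding is unnecessary.} What you call the main obstacle is a non-issue. \cref{EGJ-spread} assumes only $\Delta(\cH)\le\Delta$ plus the codegree bound, not regularity, and gives $|\mc M|\approx e(\cH)/\Delta$ for irregular $\cH$. The paper takes $\cH^{(k)}_\eps$ to be $k$ disjoint copies of the line hypergraph, with edge set $[n]^2\times[k]$. Then $\Delta=n$, $\Delta_2=1$ and $|\mc M|\approx kn^2/n=kn$ directly. Your blow-up also works unpadded, with $e=n^2k^D$ and $\Delta=nk^{D-1}$.

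\textbf{Comparison with the paper.} The paper deletes one point from each $4$-tuple on an $\eps$-light line, leaving at most $3\le k$ points there. It controls these tuples with the clean test function $w_{\light}$ via \cref{eq:EGJ}, using $w_{\light}(E)=O_k(\eps n^5)$ and $B_n=O_k(n^4)$. Your $(k+1)$-tuples handled by spreadness and Markov works equally well once the count is fixed.
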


Let $\mathcal{D}$ denote the set of possible directions of lines that intersect $[n]^2$ in at least two points:
\[
\mc{D} \defeq \{(a, b) \in \ZZ^2 : |a| < n,\; |b| < n, \; \gcd(a, b) = 1, \; a > 0 \text{ or } (a = 0 \text{ and } b > 0)\}.
\]
For $\eps \in (0, 1]$, we say that a direction $\mathbf{d} \in \mc{D}$ is \emph{$\eps$-heavy} if $1/\|\mathbf{d}\|_{\infty} \ge \eps$; otherwise, we say that it is \emph{$\eps$-light}. Let  $\mc{D}_\eps \subseteq \mc{D}$ denote the set of $\eps$-heavy directions, and note that $|\mc{D}_\eps| = O(1/\eps^2)$. We say that a Euclidean line is $\eps$-heavy (resp. $\eps$-light) if its direction is $\eps$-heavy (resp. $\eps$-light). To handle the light lines via a deletion argument, we will use the following counting lemma.

\begin{lemma} \label{light-lines-triples}
  For every $\eps \in (0, 1]$ and $p \in [n]^2$, there are $O(\eps n^3)$ triples of points $\{p_1, p_2, p_3\} \subseteq [n]^2 \setminus \{p\}$ such that $p, p_1, p_2, p_3$ lie on the same $\eps$-light line.
\end{lemma}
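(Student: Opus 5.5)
Fix $p$ and group the triples by the direction of the common line. Each direction $\dir = (a,b) \in \mc{D}$ gives exactly one line through $p$ with direction $\dir$. The grid points on that line are $p + t\dir$ for integers $t$. Since both coordinates must stay in $[n]$, the parameter $t$ lies in an interval of length at most $n/\|\dir\|_\infty$. So the line contains at most $n/m + 1 \le 2n/m$ grid points, where $m \defeq \|\dir\|_\infty \ge 1$, and hence at most $(2n/m)^3$ triples $\{p_1,p_2,p_3\}$ of other points on it. The plan is to sum this bound over all $\eps$-light directions, i.e.\ those with $m > 1/\eps$.

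First I count directions by their norm. The number of $\dir \in \mc{D}$ with $\|\dir\|_\infty = m$ is at most the number of integer vectors with sup-norm exactly $m$, which is $O(m)$. The total count is then
\[
\sum_{m > 1/\eps} O(m) \cdot \frac{8n^3}{m^3} = O\Big(n^3 \sum_{m > 1/\eps} \frac{1}{m^2}\Big) = O(\eps n^3).
\]
The last step uses $\sum_{m > M} m^{-2} = O(1/M)$ with $M = 1/\eps$. This holds for all $\eps \in (0,1]$, since $1/\eps \ge 1$ and the tail bound $O(1/M)$ is valid for every $M \ge 1$.

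The only subtle point is that each line has a single direction in $\mc{D}$, so no line is counted twice. Also, directions with $m \ge n$ contribute nothing, because such a line through $p$ contains no other grid point; restricting to $m < n$, as in the definition of $\mc{D}$, is therefore harmless. There is no real obstacle here: the argument is the $k=3$ instance of the convergent-series heuristic in \cref{rmk:k-illustration}, where the per-line count $(n/m)^{3}$ beats the $O(m)$ growth in the number of directions.
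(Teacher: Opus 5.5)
Your proposal is correct and is essentially the paper's proof: fix $p$, bound the number of other grid points on the line through $p$ in direction $\dir$ by $O(n/\|\dir\|_\infty)$, use that there are $O(m)$ directions with $\|\dir\|_\infty = m$, and sum the convergent tail $\sum_{m > 1/\eps} O(m)\cdot n^3/m^3 = O(\eps n^3)$. The only cosmetic difference is that you bound the total number of points on the line by $n/m + 1 \le 2n/m$, whereas the paper bounds the other points directly by $n/m$; this changes only the constant.
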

\begin{proof}
  For each $m \in \NN$, there are at most $4m$ directions $\dir \in \mc{D}$ with $\|\dir\|_\infty = m$. For each $\eps$-light direction $\dir$, the line through $p$ in direction $\dir$ contains at most $n/\|\dir\|_\infty$ other points of $[n]^2$, and hence there are at most $(n/\|\dir\|_\infty)^3$ choices for the triple $\{p_1, p_2, p_3\}$ on this line. Taking the sum over all $\eps$-light directions, we conclude that the total number of such triples is at most
  \[
  n^3 \sum_{\dir \in \mc{D} \setminus \mc{D}_\eps} \frac{1}{\|\dir\|_\infty^3} \le n^3 \sum_{m = \lfloor 1/\eps \rfloor + 1}^n \frac{4m}{m^3} = O(\eps n^3). \qedhere
  \]
\end{proof}

\begin{definition} \label{def:hypergraph-approximate}
  Let $\cH_\eps$ be the following $|\mathcal{D}_\eps|$-uniform $|\mathcal{D}_\eps|$-partite hypergraph: the vertices of $\cH_\eps$ are the $\eps$-heavy lines which intersect $[n]^2$, and for each $p \in [n]^2$ we put an edge $\{L \in V(\cH_\eps) : p \in L \}$.
  Then, let $\cH^{(k)}_{\eps}$ be the union of $k$ disjoint copies of $\cH_{\eps}$.
  We identify the edge set of $\cH^{(k)}_{\eps}$ with the Cartesian product $[n]^2 \times [k]$.
\end{definition}

\begin{definition} \label{def:test-functions-approximate}
  Define the $\{0,1\}$-valued test functions $w_{\size}, w_{\rep}$, and $w_{\light}$ on $E(\cH^{(k)}_\eps)$ as follows:
  \begin{itemize}
    \item $w_{\size}$ is $1$-uniform, and $w_{\size}(\{(p, i)\}) = 1$ for every $(p, i) \in [n]^2 \times [k]$;
    \item $w_{\rep}$ is $2$-uniform, and $w_{\rep}(\{(p_1, i_1), (p_2, i_2)\}) = 1$ if and only if $p_1 = p_2$;
    \item $w_{\light}$ is $4$-uniform, and $w_{\light}(\{(p_1, i_1), (p_2, i_2), (p_3, i_3), (p_4, i_4)\}) = 1$ if and only if $p_1, p_2, p_3, p_4$ are distinct and lie on the same $\eps$-light line.
  \end{itemize}
  Note that all these test functions are clean.
\end{definition}

\begin{lemma} \label{test-functions-approximate-properties}
  The test functions $w_{\size}$, $w_{\rep}$, and $w_{\light}$ defined above satisfy the following properties:
  \begin{itemize}
    \item $w_{\size}([n]^2 \times [k]) = kn^2$ and $B_n(w_{\size}) = n$;
    \item $w_{\rep}([n]^2 \times [k]) = \binom{k}{2}n^2$ and $B_n(w_{\rep}) = n^2$;
    \item $w_{\light}([n]^2 \times [k]) = O_k(\eps n^5)$ and $B_n(w_{\light}) = O_k(n^4)$.
  \end{itemize}
\end{lemma}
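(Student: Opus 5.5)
We check the three items one at a time. Each item asks for two quantities: the total weight $w(E(\cH^{(k)}_\eps))$, and the values $\Delta_j(w)$ for $j\in[\ell]$. From the latter, $B_n(w)=\max_j \Delta_j(w)n^j$ follows directly.

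\emph{The function $w_{\size}$.} The total weight is $|[n]^2\times[k]|=kn^2$. Since $\ell=1$, we have $\Delta_1(w_{\size})=1$, so $B_n(w_{\size})=n$.

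\emph{The function $w_{\rep}$.} Each $p\in[n]^2$ contributes the $\binom{k}{2}$ pairs $\{(p,i_1),(p,i_2)\}$ with $i_1\neq i_2$, so the total weight is $\binom{k}{2}n^2$. Fixing one edge $(p,i)$, the pairs containing it are obtained by choosing $i'\ne i$. This gives $\Delta_1=k-1$ and $\Delta_2=1$, hence $B_n=\max((k-1)n,n^2)$. This equals $n^2$ once $n\ge k$, and we may assume $n\ge k$ since $n$ is large in terms of $k$. (If the claimed equality is only meant up to constants, $B_n=O(n^2)$ suffices anyway.)

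\emph{The function $w_{\light}$.} This is the only item with real content, and \cref{light-lines-triples} does the work.

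For the total weight, we count ordered choices of a point $p$ together with a triple $\{p_1,p_2,p_3\}$ on a common $\eps$-light line through $p$. Summing \cref{light-lines-triples} over all $n^2$ choices of $p$ gives $O(\eps n^5)$. Each 4-set of points is counted at most $4$ times this way, so there are $O(\eps n^5)$ collinear 4-sets on $\eps$-light lines. Multiplying by the $k^4$ choices of copy labels gives total weight $O_k(\eps n^5)$.

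For the degrees, we fix a set $J$ of $j$ edges. If the underlying points of $J$ are not distinct, no weighted 4-set contains $J$, so the sum is zero. Otherwise the points of $J$ are distinct, and we bound each $\Delta_j$ separately.
\begin{itemize}
\item $j=1$: by \cref{light-lines-triples}, $\Delta_1=O_k(\eps n^3)=O_k(n^3)$.
\item $j=2$: two distinct points determine the line. Crudely, that line contains at most $n$ grid points, so there are at most $\binom{n}{2}k^2$ completions and $\Delta_2=O_k(n^2)$.
\item $j=3$: similarly, $\Delta_3\le nk$.
\item $j=4$: $\Delta_4\le 1$.
\end{itemize}
Thus each term $\Delta_j n^j$ is $O_k(n^4)$, so $B_n(w_{\light})=O_k(n^4)$.

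No step here is a real obstacle; all the content sits in \cref{light-lines-triples}. The one point needing care is that the line through two given points has at most $n$ grid points, which holds because it meets each column (or each row) at most once.
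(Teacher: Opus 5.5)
Your proof is correct and follows the paper's argument essentially step for step. The first two items are read off from the definitions. For $w_{\light}$, you sum \cref{light-lines-triples} over all points to get the total weight, use it again for $\Delta_1$, and use that two points determine a line to get $\Delta_j(w_{\light}) = O_k(n^{4-j})$ for $2 \le j \le 4$.

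One small slip concerns the deduction that there are $O(\eps n^5)$ light collinear 4-sets. For that you need each 4-set to be counted \emph{at least once} in your sum; in fact each is counted exactly four times. Saying ``at most 4 times'' gives the inequality in the wrong direction.
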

\begin{proof}
  The first two items are immediate from the definitions of $w_{\size}$ and $w_{\rep}$. For the third item, note that 
  \[
  w_{\light}([n]^2 \times [k]) = k^4 \cdot \Big|\Big\{\{p_0, p_1, p_2, p_3\} \in \binom{[n]^2}{4} : p_0, p_1, p_2, p_3 \text{ lie on the same $\eps$-light line}\Big\}\Big|.
  \]
  Summing the bound given by \cref{light-lines-triples} over all $p_0 \in [n]^2$, we conclude that $w_{\light}([n]^2 \times [k]) = O_k(\eps n^5)$. Similarly, again by \cref{light-lines-triples}, 
  \[
  \Delta_1(w_{\light}) = k^3 \cdot \max_{p \in [n]^2} \Big|\Big\{\{p_1, p_2, p_3\} \in \binom{[n]^2 \setminus \{p\}}{3} : p, p_1, p_2, p_3 \text{ lie on the same $\eps$-light line}\Big\}\Big| = O_k(\eps n^3).
  \]
  Finally, since two points determine a line, for $2 \le j \le 4$ we have $\Delta_j(w_{\light}) \le (kn)^{4-j}$. Therefore, $B_n(w_{\light}) = \max\limits_{j \in [4]} \Delta_j(w_{\light}) n^j = O_k(n^4)$.
\end{proof}

\begin{proof}[\textbf{Proof of \cref{no-k+1-in-line-approximate}}]
    Set $\eps \defeq c\eta$ for a sufficiently small $c = c(k) > 0$. Throughout the proof, we assume that $n$ is sufficiently large in terms of $k$ and $\eps$.

    We would like to apply \cref{EGJ-spread} to the hypergraph $\cH^{(k)}_\eps$ from \cref{def:hypergraph-approximate} with $\Delta = n$ and $r = |\mathcal{D}_\eps| = O(1/\eps^2)$ and $\delta = 0.1$ and $L = 4$, and test functions $w_{\size}$, $w_{\rep}$, and $w_{\light}$ from \cref{def:test-functions-approximate} (in fact, here we do not need the spreadness guarantee; we only use the existence of a matching satisfying \cref{eq:EGJ}). To check the required hypotheses, we first note that $\Delta(\cH^{(k)}_{\eps}) = n$ and, since a pair of lines share at most one point, $\Delta_2(\cH^{(k)}_{\eps}) = 1 \le n^{1-\delta}$. Since $\gamma = \delta/(100L^2r^2) = \Theta_{k, \eps}(1)$, we have $|E(\cH^{(k)}_\eps)| = kn^2 \le \exp(n^{\gamma^2})$, and the number of test functions is $3 \le \exp(n^{\gamma^2})$. So, \cref{EGJ-spread} gives us a matching $\mc{M}$ in $\cH^{(k)}_\eps$ such that 
    \[
    |\mc{M}| = w_{\size}(\mc{M}) = (1 \pm n^{-\gamma}) kn \pm 2n^{\delta}, \qquad w_{\rep}(\mc{M}) = O_k(n^{\delta}), \qquad w_{\light}(\mc{M}) = O_k(\eps n). 
    \]
    Viewing $\mc{M}$ as a subset of $[n]^2 \times [k]$, let $S_{\mc{M}}$ be its projection onto $[n]^2$. Then, 
    \[
    |S_{\mc{M}}| \ge |\mc{M}| - w_{\rep}(\mc{M}) = kn - O_k(n^{1 - \gamma}).
    \]
    Since $\mc{M}$ is a matching in $\cH^{(k)}_{\eps}$, $S_{\mc{M}}$ contains at most $k$ points on each $\eps$-heavy line. Let $S_{\del} \subseteq S_{\mc{M}}$ be the set obtained by including one point from each quadruple of points in $S_{\mc{M}}$ that lie on the same $\eps$-light line. Then, $|S_{\del}| \le w_{\light}(\mc{M}) = O_k(\eps n)$, and hence 
    \[
    |S_{\mc{M}} \setminus S_{\del}| = kn - O_k(n^{1-\gamma}) - O_k(\eps n) \ge (1-\eta)kn.
    \] 
    Since $S_{\mc{M}} \setminus S_{\del}$ contains at most $3 \le k$ points on each $\eps$-light line by construction, this completes the proof.
\end{proof}

\section{Completion procedure}
\label{sec:completion}

In this section, our goal is to prove \cref{no-k+1-in-line-kn}. Since for $n \ge k \ge 10^{37}$ the desired result is just \cite[Theorem 1.1]{grebennikov-kwan-25}, we focus on the case when $k$ is fixed and $n$ is sufficiently large in terms of $k$.
The dependence of $n$ on $k$ will be moderated via the intermediate parameters $\rho, \alpha, \eps \in (0, 1)$ used throughout this section that satisfy
\begin{equation} \label{eq:dependencies-of-parameters}
1/k \gg \rho \gg \alpha \gg \eps \gg 1/n,
\end{equation}
where $a \gg b$ means that we take $b$ to be sufficiently small in terms of $a$. 

We briefly explain the role of each intermediate parameter. The role of $\eps$ is essentially the same as in \cref{sec:approximate}: it is a threshold that separates the heavy directions that correspond to the parts of $\cH^{(k)}_\eps$ from the light directions that are handled via a deletion argument. The parameter $\alpha$ is a similar ``heaviness threshold'', which will be used for the completion procedure after the deletion argument (for technical reasons we need $\alpha$ to be much larger than $\eps$).
Finally, $\rho$ controls the randomness of the completion procedure: we will show that, with high probability, at each step we have at least $\rho n$ available absorbers to choose from.

We say that a direction $\dir \in \mc{D}$ is \emph{non-trivial} if $\dir \notin \{(1,0), (0,1)\}$; similarly, a line is \emph{non-trivial} if it is neither vertical nor horizontal. Let $\mc{D}'_{\eps} \defeq \mc{D}_\eps \setminus \{(1,0), (0,1)\}$ be the set of non-trivial $\eps$-heavy directions. Also, we say that a direction $\dir$ (or a line with such direction) is \emph{irrelevant} if $\|\dir\|_{\infty} \ge n/3$. Note that an irrelevant line contains at most $3$ points of the grid $[n]^2$, and thus is indeed not relevant for our problem.

\begin{definition} \label{def:absorber}
A point $(x, y) \in S \subseteq [n]^2$ is called an \emph{absorber} in $S$ for a point $(c, r) \in [n]^2$ if 
\begin{enumerate}[{\bfseries{A\arabic{enumi}}}]
  \item\label{A1} $(x, r) \notin S$, $(c, y) \notin S$; 
  \item\label{A2} $(x, y) \neq (c, r)$, and the line through $(x, r)$ and $(c, y)$ is irrelevant;
  \item\label{A3} every non-trivial line through $(x, r)$ or $(c, y)$ contains at most $k-1$ points of $S$.
\end{enumerate}
Let $\mc{A}_S(p)$ denote the set of absorbers in $S$ for $p$.
\end{definition}

To illustrate the usefulness of absorbers, consider a set $S \subseteq [n]^2$ with at most $k$ points on each line and a point $(c, r)$ such that its row and column contain at most $k-1$ points of $S$. If $(x, y) \in S$ is an absorber for $(c, r)$, then the set $S' \defeq (S \setminus \{(x, y)\}) \cup \{(x, r), (c, y)\}$ also contains at most $k$ points on each line, and has size $|S| + 1$.

For future use, we record the observation that a positive fraction of the grid points satisfy condition \cref{A2}. For a point $(c, r) \in [n]^2$, define 
\[
I(c, r) \defeq \{(x, y) \in [n]^2 \setminus \{(c, r)\} : \text{the line through $(x, r)$ and $(c, y)$ is irrelevant}\}.
\]

\begin{lemma} \label{irrelevant-lines}
  $|I(c, r)| \ge 0.05 n^2$ for each $(c, r) \in [n]^2$.
\end{lemma}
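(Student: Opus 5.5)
The plan is to exhibit explicitly a region of size at least $0.05n^2$ inside $I(c,r)$. The line through $(x,r)$ and $(c,y)$, for $x\neq c$ and $y\neq r$, has direction vector $(c-x,\,y-r)$. After dividing by $g=\gcd(c-x,y-r)$ we get a primitive direction $\dir$ with $\|\dir\|_\infty=\max(|c-x|,|y-r|)/g$. Therefore the line is irrelevant as soon as
\[
\gcd(c-x,y-r)=1 \qquad\text{and}\qquad \max(|c-x|,|y-r|)\ge n/3 .
\]
(The degenerate cases $x=c$ or $y=r$ give a horizontal or vertical line, or a single point, and these we simply discard.) So it suffices to count pairs $(x,y)\in[n]^2$ with $|c-x|\ge n/3$ and $\gcd(c-x,y-r)=1$.

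\textbf{Counting the admissible $x$.} For any $c\in[n]$, at least one of the intervals $[1,c-n/3]$ and $[c+n/3,n]$ has length at least about $n/3$. Indeed, their lengths add up to roughly $n-2n/3=n/3$, and the larger one has length at least $\max(c,n-c)-n/3\ge n/2-n/3=n/6$. So we get at least $n/6-1$ values of $u=c-x$ with $|u|\ge n/3$. For a fixed such $u\neq 0$, we need to count the $y\in[n]$ with $\gcd(u,y-r)=1$. As $y$ ranges over $[n]$, the quantity $v=y-r$ ranges over an interval of $n$ consecutive integers.

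\textbf{Main step (coprimality density).} The number of $v$ in an interval of length $n$ coprime to $u$ is $n\,\varphi(|u|)/|u|+O(2^{\omega(u)})$, by inclusion–exclusion over the squarefree divisors of $u$. A single $u$ can have a small ratio $\varphi(u)/u$, so I will avoid per-$u$ bounds and instead average over $u$, using Möbius inversion:
\[
\#\{(u,v): \gcd(u,v)=1\}=\sum_{d\ge1}\mu(d)\,\#\{u\equiv 0\}\,\#\{v\equiv0\}\ge U V\Bigl(1-\sum_{d\ge2}d^{-2}\Bigr)-O(n\log n).
\]
Here $u$ ranges over an interval of length $U\ge n/6-1$ and $v$ over an interval of length $V=n$. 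The sum over $d\ge 2$ is $\pi^2/6-1\approx 0.645$, so the main term is at least $0.355\,UV\ge 0.059n^2-O(n\log n)$. (Alternatively, one can use the density $6/\pi^2$ directly, which gives $\approx 0.1n^2$.) Once $n$ is large this exceeds $0.05n^2$.

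The only delicate point is handling the error terms in the Möbius sum carefully. The term for each $d$ contributes an error of $O(U/d+V/d+1)$, and this is needed only for $d\le n$, since no $d>n$ divides any nonzero $v$ in the range except when the interval contains $0$. The total error is therefore $O(n\log n)$, which is $o(n^2)$. If a cleaner constant is wanted, I would use a cruder lower bound, $1-\sum_{p}p^{-2}-\dots$, but the estimate above already suffices with room to spare.
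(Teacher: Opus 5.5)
Your proof is correct, but it takes a different route from the paper's. The paper reflects so that $c,r\le n/2$ and notes that $(c+a,r+b)\in I(c,r)$ for every primitive $(a,b)\in[0,n/2]\times[n/3,n/2]$. It then invokes the classical asymptotic for primitive lattice points in dilates of a convex polygon (density $6/\pi^2$ times area, with an $o(m^2)$ error). Since the rectangle has area $n^2/12$, this gives about $\frac{1}{2\pi^2}n^2\approx 0.0507n^2$, only barely above $0.05n^2$.

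You instead put the large offset in the $x$-coordinate and pair the larger side interval for $x$ (at least $n/6-1$ values) with all $n$ values of $y$. Your region therefore has area about $n^2/6$, twice the paper's. That extra room is what lets you use the crude bound $\sum_d\mu(d)a_db_d\ge UV-\sum_{d\ge 2}a_db_d\ge(2-\pi^2/6)UV-O(n\log n)$. This bound is elementary and self-contained, and its error term $O(n\log n)$ is explicit rather than an unspecified $o(n^2)$. The paper's route is shorter, but it leans on an external lattice-point asymptotic and leaves a thin margin.

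Two small expository slips do not affect validity. First, your claim that one of the two intervals has length ``about $n/3$'' is false when $c\approx n/2$: it is their union that has length about $n/3$. You immediately use the correct bound $n/6$, so nothing breaks. Second, the truncation of the M\"obius sum at $d\le n$ comes from $d\mid u$ with $0<|u|<n$, not from any property of $v$. The case $v=0$ (i.e.\ $y=r$) is harmless, since then $\gcd(u,0)=|u|\ge n/3>1$ and such pairs are automatically excluded from the coprime count.
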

\begin{proof}
  By reflecting the grid if necessary, we may assume that $c \le n/2$ and $r \le n/2$. By definition, if 
  \[
  \max(|x-c|, |y-r|) \ge n/3 \quad \text{ and } \quad \gcd(x-c, y-r) = 1,
  \]
  then the line through $(x, r)$ and $(c, y)$ is irrelevant. Hence, for every integer vector $(a, b) \in [0, n/2] \times [n/3, n/2]$ with $\gcd(a, b) = 1$, the point $(c+a, r+b)$ belongs to $I(c, r)$. A classical result (likely dating back to Minkowski, see e.g.~\cite{BMNR-20}) states that for a convex polygon $Q$, the number of integer points with coprime coordinates inside $m Q$ is equal to $\frac{6}{\pi^2} \cdot \mathrm{area}(Q) \cdot m^2 + o(m^2)$ as $m \to \infty$. Applying this result to the rectangle $[0, n/2] \times [n/3, n/2]$, we conclude that 
  \[
  |I(c, r)| \ge \frac{6}{\pi^2} \cdot \frac{n^2}{12} + o(n^2) > 0.05 n^2. \qedhere
  \]
\end{proof}

As in \cref{sec:approximate}, our construction involves a pseudorandom matching in the hypergraph $\cH^{(k)}_\eps$ from \cref{def:hypergraph-approximate}. To show that the resulting configuration can be completed to a no-$(k+1)$-in-line set of size $kn$, we need to ensure that at each step of the completion procedure we have enough available absorbers. In turn, to find these absorbers we need finer control over the pseudorandomness of our matching, which is achieved via additional test functions for heavy lines and via spreadness for light lines.

\subsection{New test functions} For a point $p \in [n]^2$ and a direction $\dir \in \mc{D}$, let $L_{\dir}(p)$ denote the intersection of $[n]^2$ with the line through $p$ in direction $\dir$, and let $L^*_{\dir}(p) := L_{\dir}(p) \setminus \{p\}$.

\begin{definition} \label{def:omega-com}
  Fix a point $(c, r) \in [n]^2$. For a point $(x, y) \in [n]^2$, we say that a point $p' \in [n]^2$ is \emph{$\eps$-common} between $(x, r)$ and $(c, y)$ if it belongs to $L^*_{\dir_1}(x, r) \cap L^*_{\dir_2}(c, y)$ for some non-trivial $\eps$-heavy directions $\dir_1, \dir_2 \in \mc{D}'_\eps$. 
  Let $w^{(c, r)}_{\com}$ be the $2$-uniform test function on $E(\cH^{(k)}_\eps) = [n]^2 \times [k]$ defined as follows. For a pair of edges $E\subseteq [n]^2\times [k]$, let $w^{(c, r)}_{\com}(E)$ be the number of choices of $(x, y) \in I(c, r)$ and $p' \in [n]^2$ and $i' \in [k]$, such that $p'$ is $\eps$-common between $(x, r)$ and $(c, y)$, and such that $E = \{((x, y), 1), (p', i')\}.$
  (Note that $w^{(c, r)}_{\com}$ takes values in $\{0, 1, 2\}$.)
\end{definition}

\begin{lemma} \label{omega-com-properties}
  For every point $(c, r) \in [n]^2$, we have 
  \[
  w_{\com}^{(c, r)}([n]^2 \times [k]) = O_{k, \eps}(n^2), \qquad B_n(w_{\com}^{(c, r)}) = O_{k, \eps}(n^2).
  \]
\end{lemma}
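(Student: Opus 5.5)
The plan is direct counting: bound the total weight by counting the triples $((x,y),p',i')$ that contribute, and bound each $\Delta_j$ separately, using only that $|\mc{D}'_\eps| = O(1/\eps^2)$ and that two non-parallel lines meet in at most one point.

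\textbf{Total weight.} Each pair $E$ has $w^{(c,r)}_{\com}(E)\le 2$, and $w^{(c,r)}_{\com}([n]^2\times[k])$ is at most the number of tuples $((x,y),p',i')$ with $(x,y)\in I(c,r)$, $i'\in[k]$, and $p'$ $\eps$-common between $(x,r)$ and $(c,y)$. Fix $(x,y)$. A common point $p'$ lies on $L_{\dir_1}(x,r)\cap L_{\dir_2}(c,y)$ for some $\dir_1,\dir_2\in\mc{D}'_\eps$, so there are at most $|\mc{D}'_\eps|^2=O(\eps^{-4})$ pairs of directions to consider.
\begin{itemize}
\item If $\dir_1\ne\dir_2$, the two lines are not parallel and meet in at most one point.
\item If $\dir_1=\dir_2$, the two lines coincide exactly when $(x,r)$ and $(c,y)$ lie on a common line of direction $\dir_1$. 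Since $\dir_1$ is $\eps$-heavy it has $\|\dir_1\|_\infty\le 1/\eps<n/3$ for large $n$. So this case is ruled out by $(x,y)\in I(c,r)$: the line through $(x,r)$ and $(c,y)$ is irrelevant, and that line is unique because $(x,r)\ne(c,y)$, which follows from $(x,y)\ne(c,r)$. Hence parallel distinct lines contribute no common points.
\end{itemize}
Thus each $(x,y)$ has $O(\eps^{-4})$ common points $p'$. Multiplying by $k$ choices of $i'$ and at most $n^2$ choices of $(x,y)$ gives total weight $O_{k,\eps}(n^2)$.

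\textbf{Bounding $B_n$.} Here $B_n(w)=\max(\Delta_1 n,\Delta_2 n^2)$. For $j=2$ the weight on a single pair is at most $2$, so $\Delta_2 n^2\le 2n^2$.

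For $j=1$, fix an edge $e=(p,i)$ and bound the weight of all pairs containing it. There are two roles $e$ can play.
\begin{itemize}
\item \emph{$e$ plays the role $((x,y),1)$.} Then $(x,y)=p$ is determined, and the argument above leaves $O_{k,\eps}(1)$ choices of $(p',i')$.
\item \emph{$e$ plays the role $(p',i')$.} We count the points $(x,y)$ for which $p'=p$ is common. Choose $\dir_1,\dir_2\in\mc{D}'_\eps$ in $O(\eps^{-4})$ ways. Then $(x,r)$ lies on the line through $p$ in direction $\dir_1$, which is non-horizontal because $\dir_1$ is non-trivial, so it meets row $r$ in at most one point. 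This determines $x$. Likewise $(c,y)$ lies on the non-vertical line through $p$ in direction $\dir_2$, which meets column $c$ at most once, determining $y$. So there are $O(\eps^{-4})$ choices of $(x,y)$.
\end{itemize}
Hence $\Delta_1=O_{k,\eps}(1)$ and $\Delta_1 n=O_{k,\eps}(n)$. Combining the two cases gives $B_n(w^{(c,r)}_{\com})=O_{k,\eps}(n^2)$.

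\textbf{Main subtlety.} The only delicate point is the coincident-line case in the total-weight count. Without the restriction $(x,y)\in I(c,r)$, a single $(x,y)$ with $(x,r)$ and $(c,y)$ on a common heavy line could have up to $n$ common points. That would push the total up to order $n^3$, and the restriction to $I(c,r)$ is exactly what prevents this.
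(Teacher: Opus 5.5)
Your proposal is correct and follows essentially the same route as the paper: you bound the number of $\eps$-common points for each $(x,y)\in I(c,r)$ by $|\mc{D}'_\eps|^2$, using that the line through $(x,r)$ and $(c,y)$ is not $\eps$-heavy. You bound the number of $(x,y)$ for each $p'$ by intersecting the heavy lines through $p'$ with the row and column of $(c,r)$, and you use $\Delta_2\le 2$. The only differences are cosmetic: the paper deduces the total weight as at most $kn^2\Delta_1$ instead of counting it directly, and it compresses the parallel/coincident-line case into one phrase where you spell it out.
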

\begin{proof}
  For each $(x, y) \in I(c, r)$ the line through $(x, r)$ and $(c, y)$ is irrelevant and thus surely not $\eps$-heavy. Hence, since two different lines meet in at most one point and $\cD_\eps' \subseteq \{(a, b) \in \ZZ^2 : |a|, |b| \le 1/\eps\}$, the number of $\eps$-common points between $(x, r)$ and $(c, y)$ is at most $|\cD_\eps'|^2 = O(1/\eps^4)$. Similarly, for each point $p' \in [n]^2$, $\eps$-heavy lines through $p'$ intersect the column and row of $(c, r)$ in at most $|\cD_\eps'|$ points, which leaves at most $|\cD_\eps'|^2 = O(1/\eps^4)$ options for the point $(x, y)$ such that $p'$ is $\eps$-common between $(x, r)$ and $(c, y)$. This implies that $\Delta_1(w_{\com}^{(c, r)}) = O_{k, \eps}(1)$. Also recall that $\Delta_2(w_{\com}^{(c, r)})\le 2$, so
  \begin{align*}
  &w_{\com}^{(c, r)}([n]^2 \times [k]) \le n^2 k \Delta_1(w_{\com}^{(c, r)}) = O_{k, \eps}(n^2), \\
  &B_n(w_{\com}^{(c, r)}) = \max(\Delta_1(w_{\com}^{(c, r)}) n, \Delta_2(w_{\com}^{(c, r)}) n^2) = O_{k, \eps}(n^2). \qedhere
  \end{align*}
\end{proof}

The next test function encodes the ``obstructions'' that preclude a point $(x, y)$ from being an absorber for $(c, r)$.

\begin{definition} \label{def:omega-D1-D2}
  For sets of non-trivial $\eps$-heavy directions $D_1, D_2 \subseteq \mathcal{D}_{\eps}'$ and a point $(c, r) \in [n]^2$, let $w_{D_1, D_2}^{(c, r)}$ be the $(k(|D_1|+|D_2|)+1)$-uniform test function on $E(\cH^{(k)}_\eps) = [n]^2 \times [k]$ defined as follows. For a set $E\subseteq [n]^2\times [k]$ of size $k(|D_1|+|D_2|)+1$, let $w_{D_1, D_2}^{(c, r)}(E)$ be the number of choices of $(x,y)\in I(c, r)$ and of $p_{\dir_1,i}\in [n]^2$ and $p_{\dir_2,i}\in [n]^2$ for all $\dir_1\in D_1$ and $\dir_2\in D_2$ and $i\in[k]$, such that
  \begin{enumerate}[{\bfseries{P\arabic{enumi}}}]
    \item\label{P1} $p_{\dir_1,i} \in L^*_{\dir_1}(x, r)$ for each $\dir_1 \in D_1$ and $i \in [k]$, and $p_{\dir_2,i} \in L^*_{\dir_2}(c, y)$ for each $\dir_2 \in D_2$ and $i \in [k]$,
    \item\label{P2} the points $p_{\dir_1,i}$ and $p_{\dir_2,i}$ are pairwise distinct and \emph{not} $\eps$-common between $(x, r)$ and $(c, y)$,
  \end{enumerate}
  and such that
  \[
   E = \{((x, y), 1)\} \cup \{(p_{\dir_1,i}, i)\}_{\dir_1 \in D_1, i \in [k]} \cup \{(p_{\dir_2,i}, i)\}_{\dir_2 \in D_2, i \in [k]}.
  \]
\end{definition}

To motivate this definition, for a matching $\mc{M}$ in $\cH^{(k)}_\eps$ and a point $(c, r) \in [n]^2$, consider the expression
\begin{equation} \label{eq:inclusion-exclusion}
\sum_{D_1, D_2 \subseteq \mc{D}'_{\eps}} (-1)^{|D_1|+|D_2|}w_{D_1, D_2}^{(c, r)}(\mc{M}).
\end{equation}
By inclusion-exclusion, it counts (up to lower-order terms) the number of points $(x, y) \in I(c, r)$ such that $((x, y), 1) \in \mc{M}$ and the projection of $\mc{M} \subseteq [n]^2 \times [k]$ onto $[n]^2$ contains at most $k-1$ points on each non-trivial $\eps$-heavy line through $(x, r)$ or $(c, y)$. To estimate the expression \cref{eq:inclusion-exclusion}, we will combine \cref{EGJ-spread} with the following statement.

\begin{lemma} \label{omega-D1-D2-properties}
  For every pair of sets $D_1, D_2 \subseteq \mathcal{D}_{\eps}'$ and a point $(c, r) \in [n]^2$, we have
  \begin{align*}
  \tilde{w}_{D_1, D_2}^{(c, r)}([n]^2 \times [k]) &= \sum_{(x, y)\in I(c, r)}\prod_{\dir_1 \in D_1}|L^*_{\dir_1}(x, r)|^k\prod_{\dir_2 \in D_2} |L^*_{\dir_2}(c, y)|^k + O_{k,\eps}(n^{k(|D_1|+|D_2|)+1}),\\    
  B_n(\tilde{w}_{D_1, D_2}^{(c, r)}) &\le B_n(w_{D_1, D_2}^{(c, r)}) = O_{k,\eps}(n^{k(|D_1|+|D_2|)+1}),
  \end{align*}
  where $\tilde{w}_{D_1, D_2}^{(c, r)} \defeq \mathbf{1}_{\{\text{$E$ : $E$ is a matching in $\cH^{(k)}_\eps$}\}} \cdot w_{D_1, D_2}^{(c, r)}$ is the cleaning of $w_{D_1, D_2}^{(c, r)}$.
\end{lemma}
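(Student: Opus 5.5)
The plan is to compute both quantities by direct counting over the parametrisation in \cref{def:omega-D1-D2}. Write $m \defeq k(|D_1|+|D_2|)$. Every term counted by $w_{D_1,D_2}^{(c,r)}$ is indexed by a choice of $(x,y)\in I(c,r)$ together with points $p_{\dir_1,i}\in L^*_{\dir_1}(x,r)$ and $p_{\dir_2,i}\in L^*_{\dir_2}(c,y)$ satisfying \cref{P2}. Since the second coordinates $i$ are built into $E$, the edge set $E$ together with the labelling determines the tuple up to $O_{k,\eps}(1)$ ambiguity. Indeed, one must decide which element carries label $1$ as $(x,y)$, and assign directions to the points; the points on distinct heavy lines through $(x,r)$ are determined once $(x,r)$ is known.

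\textbf{Main term.} Ignoring \cref{P2} and cleanness, the number of such tuples is exactly
\[
\sum_{(x,y)\in I(c,r)}\prod_{\dir_1\in D_1}|L^*_{\dir_1}(x,r)|^k\prod_{\dir_2\in D_2}|L^*_{\dir_2}(c,y)|^k.
\]
Each tuple satisfying \cref{P2} gives a set $E$ of the correct size. The cleaning removes exactly those $E$ that are not matchings in $\cH^{(k)}_\eps$, i.e.\ those in which two distinct edges $(p,i),(p',i)$ with the same copy index $i$ have $p,p'$ on a common $\eps$-heavy line. (Distinct $p$'s with different copy indices never conflict.) So the error consists of the following tuples:
\begin{itemize}
  \item tuples violating \cref{P2}: two points coincide, or some point is $\eps$-common;
  \item tuples in which two of the $m+1$ points lie on a common $\eps$-heavy line, restricted to pairs with the same copy index. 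The special edge $((x,y),1)$ has index $1$ and is included here.
\end{itemize}
Each such condition is a single ``coincidence'', and I will show it costs a factor $n$. The free parameters are $(x,y)$, which gives $n^2$ choices, and each of the $m$ points, which gives at most $n$ choices each; the total is $n^{m+2}$. But $(x,y)$ ranges over a set of size $\le n^2$ while the main term is $\Theta(n^{m+2})$, so I need to recount. The claimed error $O(n^{m+1})$ is one power less than the trivial bound $n^{m+2}$, so each bad event must lose a factor of $n$.

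\textbf{Bad events.} I will check the cases in turn.
\begin{itemize}
  \item Coinciding points $p=p'$ lying on two different lines. If both lines pass through the same base point, then $p$ is the base point, which is excluded by $L^*$. Otherwise $p$ is the intersection of two lines, so it is determined, and one factor $n$ is lost.
  \item An $\eps$-common point. It is determined up to $O(1/\eps^4)$ choices by $(x,y)$, which loses a factor $n$.
  \item Two points on a common heavy line $\ell$ in direction $\dir$. If both points lie on lines through the same base point in the same direction, they share the copy index only if $i$ is equal, but they are distinct points with the same $i$ and the same $\dir_1$, which is impossible because each pair $(\dir_1,i)$ is used once. Otherwise the second point lies on the intersection of its own line (through a base point, of direction $\dir'$) with a heavy line of direction $\dir$ through the first point. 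If $\dir\neq\dir'$, this determines it up to $O(1/\eps^2)$ choices, losing a factor $n$. If $\dir=\dir'$, both lines are parallel, so they must coincide; this forces a linear condition on the base points, e.g.\ $(x,r)$ lying on a specific heavy line through $(c,y)$ or the like, which cuts the $(x,y)$ count by a factor $n$.
  \item The same reasoning applies to $(x,y)$ itself on a heavy line with some $p_{\dir,1}$: $(x,y)$ lies off the base point $(x,r)$ unless $y=r$, which is $O(n)$ cases. So the line through $(x,y)$ and $p$ intersects the defining line of $p$ in one point unless it is parallel, which again gives a linear constraint.
\end{itemize}
Summing $O_{k,\eps}(1)$ many such cases gives the $O_{k,\eps}(n^{m+1})$ error. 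The main obstacle is this case analysis, especially the parallel/degenerate subcases, where a base-point constraint must be extracted.

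\textbf{$B_n$ bound.} The inequality $B_n(\tilde w)\le B_n(w)$ is immediate since $\tilde w\le w$ pointwise. For $\Delta_j(w)$ with $|J|=j$, I fix $j$ edges of $E$. The remaining freedom is $(x,y)$ plus the other points, but once any point of $J$ is fixed its role pins down a coordinate. If $J$ contains $(x,y)$, the remaining $m+1-j$ points have $\le n$ choices each, giving $n^{m+1-j}$. If $J$ contains some $p_{\dir_1,i}$, then $x$ is determined up to $O(1)$ by $p$ and the direction, since $(x,r)$ lies on the line through $p$ in direction $\dir_1$ and meets row $r$ once. This leaves $n$ choices for $y$ and $n$ choices for each remaining point. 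Similarly $p_{\dir_2,i}$ determines $y$. So we get $O(n^{1+(m-j)})$ in all cases except when $J$ has points of only one type, which gives exactly $O(n^{m+1-j})$. Hence $\Delta_j(w)n^j=O_{k,\eps}(n^{m+1})$ for all $j$, as required.
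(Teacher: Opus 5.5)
Your proposal is correct and follows essentially the same route as the paper. In both, $\Delta_j$ is bounded by noting that any fixed $p_{\dir,i}$ pins down the column or row of $(x,y)$, and the main-term error comes from \cref{P2}-violations and non-matchings, each costing a factor of $n$ because one point is forced to be a line intersection. The only cosmetic difference is in the parallel subcases: you bound them by restricting $(x,y)$ to $O(n)$ choices, whereas the paper rules them out entirely, using \cref{P2} and the facts that $(x,y)\in I(c,r)$ makes the line through $(x,r)$ and $(c,y)$ irrelevant and forces $x\neq c$, $y\neq r$.
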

\begin{proof}
  For brevity, denote $K \defeq k(|D_1|+|D_2|) + 1 = O_{k, \eps}(1)$. First, we check that for every $j \in [K]$ we have $\Delta_j(w_{D_1, D_2}^{(c, r)}) = O_{k, \eps}(n^{K-j})$, which would imply the desired bound on $B_n(w_{D_1, D_2}^{(c, r)})$.
  Consider a set $J \subseteq [n]^2 \times [k]$ of size $j \in [K]$. We need to bound the number of sequences 
  \begin{equation} \label{eq:sequence-E}
    \vec{E} = \Big(((x, y), 1), ((p_{\dir_1, i}, i))_{\dir_1 \in D_1, i \in [k]}, ((p_{\dir_2, i}, i))_{\dir_2 \in D_2, i \in [k]} \Big)
  \end{equation}
  of elements of $[n]^2 \times [k]$ satisfying \cref{P1,P2} from \cref{def:omega-D1-D2} whose underlying set $E$ contains $J$.
  Fix one of the $O_{k, \eps}(1)$ ways to assign distinct ``positions'' in this sequence to the elements of $J$. If some element of $J$ plays the role of $((x, y), 1)$, then by \cref{P1} the number of ways to fill each of the $K-j$ unoccupied positions is at most $n$, which gives a total of at most $n^{K-j}$ such sequences $\vec{E}$. Otherwise, consider an arbitrary $(p, i) \in J$ which plays the role of $(p_{\dir_1, i}, i)$ for some $\dir_1 \in D_1$ or $(p_{\dir_2, i}, i)$ for some $\dir_2 \in D_2$. In the first case $(x, y)$ must share a column with the intersection point of $L_{\dir_1}(p)$ and the row of $(c, r)$, and in the second case $(x, y)$ must share a row with the intersection point of $L_{\dir_2}(p)$ and the column of $(c, r)$. Either way, this leaves at most $n$ options for $(x, y)$, and hence at most $n \cdot n^{K-j-1} = n^{K-j}$ options for the sequence $\vec{E}$. 

  Next, we prove the asymptotic formula for $w_{D_1, D_2}^{(c, r)}([n]^2 \times [k])$, which is again just the number of sequences $\vec{E}$ as in \cref{eq:sequence-E} that satisfy \cref{P1,P2}. Fix a point $(x, y) \in I(c, r)$, and note that there are at most $|\mc{D}'_\eps|^2 = O(1/\eps^4)$ $\eps$-common points between $(x, r)$ and $(c, y)$ (because the line through $(x, r)$ and $(c, y)$ is not $\eps$-heavy). Picking the points $(p_{\dir_1, i})_{\dir_1 \in D_1, i \in [k]}$ and $(p_{\dir_2, i})_{\dir_2 \in D_2, i \in [k]}$ one by one, we have $|L^*_{\dir_1}(x, r)| - O_{k, \eps}(1)$ choices for each $p_{\dir_1, i}$ and $|L^*_{\dir_2}(c, y)| - O_{k, \eps}(1)$ choices for each $p_{\dir_2, i}$ (because \cref{P2} requires these points to be distinct and not $\eps$-common between $(x, r)$ and $(c, y)$). As a consequence, 
  \begin{equation} \label{eq:omega-D1-D2-no-cleaning}
  w^{(c, r)}_{D_1, D_2}([n]^2 \times [k]) = \sum_{(x, y) \in I(c, r)} \prod_{\dir_1 \in D_1} \left(|L^*_{\dir_1}(x, r)| - O_{k, \eps}(1)\right)^k \prod_{\dir_2 \in D_2} \left(|L^*_{\dir_2}(c, y)| - O_{k, \eps}(1)\right)^k.
  \end{equation}
  It remains to take into account the cleaning. We will show that the number of sequences $\vec{E}$ such that $\tilde{w}(E) \neq w(E)$ is $O_{k, \eps}(n^K)$. Together with \cref{eq:omega-D1-D2-no-cleaning}, this would yield the desired asymptotic formula for $\tilde{w}_{D_1, D_2}^{(c, r)}([n]^2 \times [k])$. By the definition of cleaning, the underlying set $E$ of each such sequence contains two elements $(p'_1, i)$ and $(p'_2, i)$ such that the points $p'_1$ and $p'_2$ lie on the same $\eps$-heavy line in some direction $\dir_0$. 
  
  Consider the case when neither of $p'_1$ and $p'_2$ plays the role of $(x, y)$ in $\vec{E}$. In this case, $p'_1 \in L^*_{\dir_1}(p_1)$ and $p'_2 \in L^*_{\dir_2}(p_2)$ for some non-trivial $\eps$-heavy directions $\dir_1$, $\dir_2$ and points $p_1, p_2 \in \{(x, r), (c, y)\}$ such that $(\dir_1, p_1) \neq (\dir_2, p_2)$. Since $p'_1$ and $p'_2$ are not $\eps$-common between $(x, r)$ and $(c, y)$ by \cref{P2}, this implies that $\dir_0 \neq \dir_1$ and $\dir_0 \neq \dir_2$. Thus, fixing one of $O_{k, \eps}(n^3)$ choices of $(x, y)$, $\dir_0$, $\dir_1$, $\dir_2$, $p_1$, $p_2$, and $p'_1 \in L^*_{\dir_1}(p_1)$ determines $p'_2$. Each of the $K-3$ remaining positions in $\vec{E}$ can be filled in at most $n$ ways, and hence the number of such sequences $\vec{E}$ is $O_{k, \eps}(n^3) \cdot n^{K-3} = O_{k, \eps}(n^K)$.
  
  Similarly, consider the case when (say) $p'_1 = (x, y)$ and $p'_2 \in L^*_{\dir}(p)$ for some non-trivial $\eps$-heavy direction $\dir$ and a point $p \in \{(x, r), (c, y)\}$. Then, clearly, $\dir_0 \neq \dir$, and thus fixing one of $O_{k, \eps}(n^2)$ choices of $p'_1 = (x, y)$, $\dir_0$, $\dir$, and $p$ determines $p'_2$. Again, each of the $K-2$ remaining positions in $\vec{E}$ can be filled in at most $n$ ways, and hence the number of such sequences $\vec{E}$ is $O_{k, \eps}(n^2) \cdot n^{K-2} = O_{k, \eps}(n^K)$.
\end{proof}

\subsection{Initial configuration} In this subsection we construct a no-$(k+1)$-in-line set $S_{\init} \subseteq [n]^2$ of size close to $kn$ that satisfies the pseudorandomness properties required for the completion procedure. It is convenient to introduce the following analogue of \cref{def:absorber}, in which condition \cref{A3} is restricted to $\eps$-heavy lines.

\begin{definition} \label{def:eps-absorber}
  For a set $S \subseteq [n]^2$ and a point $(c, r) \in [n]^2$, we say that a point $(x, y) \in S$ is an \emph{$\eps$-absorber} in $S$ for $(c, r)$ if $(x, r) \notin S$, $(c, y) \notin S$, $(x, y) \in I(c, r)$, and every non-trivial $\eps$-heavy line through $(x, r)$ or $(c, y)$ contains at most $k-1$ points of $S$. Let $\mc{A}_{S, \eps}(p)$ denote the set of $\eps$-absorbers in $S$ for $p$.
\end{definition}

\begin{lemma}
\label{initial-configuration}
    There exists $\gamma = \gamma(k, \eps) > 0$ and a $(2k/n, n^{\gamma})$-spread random set $S_{\init} \subseteq [n]^2$ that always satisfies the following properties:
    \begin{enumerate}
        \item[(a)] $|S_{\init}| = kn - O_k(\eps n)$;
        \item[(b)] $S_{\init}$ contains at most $k$ points on each line;
        \item[(c)] for each $p \in [n]^2$, we have $|\mc{A}_{S_{\init}, \eps}(p)| \ge 2 \rho n$.
    \end{enumerate}
\end{lemma}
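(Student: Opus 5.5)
We apply \cref{EGJ-spread} to $\cH^{(k)}_\eps$ with $\Delta = n$, $r = |\mc{D}_\eps|$, $\delta = 0.1$, and $L$ equal to the maximum uniformity $k(|D_1|+|D_2|)+1 \le 2k|\mc{D}'_\eps|+1 = O_{k,\eps}(1)$. The hypotheses on degrees, codegree and edge count are verified exactly as in the proof of \cref{no-k+1-in-line-approximate}. For the test functions we take $w_{\size}$, $w_{\rep}$, $w_{\light}$, together with $\tilde{w}^{(c,r)}_{\com}$ and $\tilde w^{(c,r)}_{D_1,D_2}$ for every $(c,r)\in[n]^2$ and every $D_1,D_2\subseteq\mc{D}'_\eps$. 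These are $O_{k,\eps}(n^2)$ clean functions in total, well below $\exp(n^{\gamma^2})$. The output is a $((1+n^{-\gamma})/n, n^\gamma)$-spread random matching $\mc{M}$ satisfying \cref{eq:EGJ} for all of them.

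We then set $S_{\mc M}$ to be the projection of $\mc M$ to $[n]^2$, and let $S_{\init} := S_{\mc M}\setminus S_{\del}$, where $S_{\del}$ contains one point from each quadruple of $S_{\mc M}$ on a common $\eps$-light line. Properties (a) and (b) follow exactly as in \cref{sec:approximate}: heavy lines carry at most $k$ points because $\mc M$ is a matching, and light lines carry at most $3\le k$ points after the deletion.

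Spreadness: $\PP[Y\subseteq S_{\init}]\le \PP[Y\subseteq S_{\mc M}]$. The event $Y \subseteq S_{\mc M}$ requires, for each $p\in Y$, that some copy $(p,i)$ lies in $\mc M$. A union bound over the $k^{|Y|}$ choices of copies, combined with the spreadness of $\mc M$, gives
\[
\PP[Y\subseteq S_{\mc M}] \le \bigl(k(1+n^{-\gamma})/n\bigr)^{|Y|} \le (2k/n)^{|Y|}.
\]

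For (c), fix $p=(c,r)$. The main step is to lower-bound the alternating sum \cref{eq:inclusion-exclusion}. By \cref{eq:EGJ} and \cref{omega-D1-D2-properties}, each term satisfies
\[
\tilde w_{D_1,D_2}(\mc M) = n^{-K}\Bigl(\sum_{(x,y)\in I(c,r)}\prod_{\dir_1 \in D_1}|L^*_{\dir_1}(x,r)|^k\prod_{\dir_2 \in D_2}|L^*_{\dir_2}(c,y)|^k\Bigr) + O_{k,\eps}(n^{1-\gamma'}),
\]
where $K = k(|D_1|+|D_2|)+1$ and the error comes from the term $2B_n\Delta^{\delta}$ together with the $O(n^K)$ cleaning error. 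Summing with signs, the main term factorises:
\[
\frac1n\sum_{(x,y)\in I(c,r)}\prod_{\dir_1}\Bigl(1-\bigl(|L^*_{\dir_1}(x,r)|/n\bigr)^k\Bigr)\prod_{\dir_2}\Bigl(1-\bigl(|L^*_{\dir_2}(c,y)|/n\bigr)^k\Bigr).
\]
The numerical work is to show this is at least $4\rho n$. Each factor is at least $1-(1/\|\dir\|_\infty)^k$, except that diagonal-type directions can give ratios close to $1$. So the product is bounded below by a constant $\beta(k)$ uniformly in $\eps$, provided we restrict to a positive-density subset of $I(c,r)$ where all the lines $L_{\dir}(x,r)$ and $L_{\dir}(c,y)$ have length at most, say, $0.9n$. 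Since $\prod_{\dir}(1-\|\dir\|_\infty^{-k})$ converges for $k\ge3$, we get a constant depending only on $k$, and $\rho$ is chosen small relative to it. This is the step I expect to be the main obstacle: it needs a region of $(x,y)$ of area $\Omega(n^2)$, with all points away from the centre lines, that is compatible with the proof of \cref{irrelevant-lines}, and it needs care about exchanging the alternating sum with the uniform errors.

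Next we convert the inclusion--exclusion count into actual absorbers. By Bonferroni the alternating sum overcounts the points $(x,y)\in I(c,r)$ with $((x,y),1)\in\mc M$ for which every non-trivial heavy line through $(x,r)$ or $(c,y)$ carries at most $k-1$ non-common points. Several corrections must be subtracted:
\begin{itemize}
\item[(i)] $\eps$-common points in $\mc M$, controlled by $w_{\com}$: \cref{omega-com-properties} gives $w_{\com}(\mc M)=O_{k,\eps}(1)+O(n^{0.1}) = o(n)$;
\item[(ii)] repeated points, controlled by $w_{\rep}$, which is $o(n)$;
\item[(iii)] points with $(x,r)$ or $(c,y)$ already in $S$, of which there are at most $2k$ per row or column, so $O(k)$ cases;
\item[(iv)] points deleted in $S_{\del}$, and lines whose count drops below $k$ after deletion, which only helps; the deletion removes at most $O_k(\eps n)$ candidates.
\end{itemize}
Together these cost $O_k(\eps n)+o(n)\le 2\rho n$, since $\eps\ll\rho$. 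This leaves at least $2\rho n$ $\eps$-absorbers for every $p$.
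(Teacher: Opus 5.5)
Your proposal follows the paper's proof essentially step for step. It uses the same hypergraph, parameters and test functions, the same deletion, the same union bound over copies for spreadness, the same factorised main term, and the same corrections (deleted points, occupied $(x,r)$ or $(c,y)$, and $\eps$-common points via $w^{(c,r)}_{\com}$). Two places need tightening.

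\textbf{The signed sum is an exact count.} It is not a Bonferroni bound, and it must not be an overcount. You subtract corrections from it to lower-bound the absorbers, so it has to be at most the number of candidates. In fact it equals the number of $(x,y)\in I(c,r)$ with $((x,y),1)\in\mathcal{M}$ and no obstructing tuple. An obstructing tuple is a choice of $\mathbf{d}\in\mathcal{D}'_\eps$ and $p\in\{(x,r),(c,y)\}$ together with $k$ distinct points $p'_i\in L^*_{\mathbf{d}}(p)$, none $\eps$-common, such that $(p'_i,i)\in\mathcal{M}$ for all $i$.

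The reason is the matching property. In copy $i$ the heavy line $L_{\mathbf{d}}(p)$ is a single vertex, so for each pair $(\mathbf{d},p)$ the number $N_{\mathbf{d},p}$ of obstructing tuples in $\mathcal{M}$ lies in $\{0,1\}$. Tuples for different pairs never share a point: two lines through $p$ meet only at $p$, and a common point of a line through $(x,r)$ and one through $(c,y)$ would be $\eps$-common. Hence $w^{(c,r)}_{D_1,D_2}(\mathcal{M})$ is a sum over $(x,y)$ of products of the indicators $N_{\mathbf{d},p}$. The full signed sum is therefore exactly $\sum_{(x,y)}\mathbf{1}[((x,y),1)\in\mathcal{M}]\prod_{\mathbf{d},p}(1-N_{\mathbf{d},p})$. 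Your correction (ii) is then unnecessary, since P2 already forces distinct points, though it is harmless.

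\textbf{The step you flagged as the main obstacle.} With threshold $0.9n$ the simple row/column count fails. For a fixed row $r$, the values of $x$ for which $(x,r)$ lies on a diagonal or antidiagonal line with more than $0.9n$ points number about $0.4n$. Discarding those columns, and the analogous rows for $(c,y)$, can remove up to $0.8n^2$ pairs, which exceeds the $0.05n^2$ from \cref{irrelevant-lines}. The threshold must be within about $0.006n$ of $n$.

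The paper takes $F=\{(x,y):|x-y|\ge 0.001n,\ |x+y-(n+1)|\ge 0.001n\}$. Every non-trivial line through a point of $F$ has $|L^*_{\mathbf{d}}(p)|\le 0.999n$. Since $[n]^2\setminus F$ meets each row and column in at most $0.004n$ points, requiring $(x,r),(c,y)\in F$ discards at most $0.008n^2$ pairs. Then $1-x\ge e^{-10x}$ on $[0,0.999]$, together with $\sum_{\mathbf{d}}\|\mathbf{d}\|_\infty^{-k}\le\sum_m 4m/m^3$ for $k\ge 3$, gives a lower bound $\Omega(n)$ with an absolute constant. The error terms need no extra care: there are only $2^{2|\mathcal{D}'_\eps|}=O_\eps(1)$ summands, each with error $O_{k,\eps}(n^{1-\gamma})$.
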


\begin{proof}
  We can apply \cref{EGJ-spread} to $\cH^{(k)}_\eps$ with
  \[\Delta = n,\qquad r = |\mc{D}_{\eps}| = O(1/\eps^2),\qquad \delta = 0.1,\qquad L = 2k|\mathcal{D}'_\eps|+1 = O(k/\eps^2),\] and test functions $w_{\size}$, $w_{\rep}$, $w_{\light}$, $(\tilde{w}^{(c, r)}_{\com})_{(c, r) \in [n]^2}$, and $(\tilde{w}^{(c, r)}_{D_1, D_2})_{(c, r) \in [n]^2, D_1, D_2 \subseteq \mc{D}'_{\eps}}$ from \cref{def:test-functions-approximate,def:omega-com,def:omega-D1-D2}. Indeed, we have $\Delta(\cH^{(k)}_{\eps}) = n$ and $\Delta_2(\cH^{(k)}_{\eps}) = 1 \le n^{1-\delta}$ and $|E(\cH^{(k)}_{\eps})| = kn^2 \le \exp(n^{\gamma^2})$ for $\gamma = \delta/(100L^2r^2)$, and the total number of test functions is $3 + n^2 + n^2 2^{O(1/\eps^2)} \le \exp(n^{\gamma^2})$. So, recalling the bounds from \cref{test-functions-approximate-properties,omega-com-properties,omega-D1-D2-properties}, \cref{EGJ-spread} gives us a $(2/n, n^{\gamma})$-spread random matching $\mc{M}$ in $\cH^{(k)}_\eps$ such that  
  \[
  |\mc{M}| = w_{\size}(\mc{M}) = (1 \pm n^{-\gamma}) kn \pm 2n^{\delta}, \qquad w_{\rep}(\mc{M}) = O_k(n^{\delta}), \qquad w_{\light}(\mc{M}) = O_k(\eps n),
  \]
  and for every $(c, r) \in [n]^2$ and $D_1, D_2 \subseteq \mc{D}'_{\eps}$,
  \begin{align}
  w^{(c, r)}_{\com}(\mc{M}) &= \tilde{w}^{(c, r)}_{\com}(\mc{M}) \!\!\!\!\!\!\!\!\!\!\!&&= O_{k, \eps}(n^{\delta}),    \label{eq:omega-com}\\
  w^{(c, r)}_{D_1, D_2}(\mc{M}) &= \tilde{w}^{(c, r)}_{D_1, D_2}(\mc{M}) \!\!\!\!\!\!\!\!\!\!\!&&= \frac{1}{n}\sum_{(x, y) \in I(c, r)}\prod_{\dir_1 \in D_1} (|L^*_{\dir_1}(x, r)|/n)^k\prod_{\dir_2\in D_2} (|L^*_{\dir_2}(c, y)|/n)^k + O_{k,\eps}(n^{1-\gamma}).\label{eq:omega-D1-D2}
  \end{align}
  Let $S_{\mc{M}}$ be the projection of $\mc{M} \subseteq [n]^2 \times [k]$ onto $[n]^2$. Note that $|S_{\mc{M}}| \ge |\mc{M}| - w_{\rep}(\mc{M}) = kn - O_{k}(n^{1-\gamma})$, and, since $\mc{M}$ is a matching in $\cH^{(k)}_{\eps}$, $S_{\mc{M}}$ contains at most $k$ points on each $\eps$-heavy line. Furthermore, since $\mc{M}$ is $(2/n, n^{\gamma})$-spread, $S_{\mc{M}}$ is $(2k/n, n^{\gamma})$-spread. Let $S_{\del}$ be obtained by including one point from each quadruple of points in $S_{\mc{M}}$ that lie on the same $\eps$-light line, and note that $|S_{\del}| \le w_{\light}(\mc{M}) = O_k(\eps n)$. Therefore, $S_{\init} \defeq S_{\mc{M}} \setminus S_{\del} \subseteq S_{\mc{M}}$ is $(2k/n, n^{\gamma})$-spread and satisfies the desired conditions (a) and (b). It remains to check that it satisfies condition (c) as well. 

  For a point $(c, r) \in [n]^2$, define $\mc{A}'_\mc{M}(c, r)$ as the set of $(x, y) \in I(c, r)$ such that $((x, y), 1) \in \mc{M}$ and such that $\mc{M}$ does not contain a set ``obstructing'' a non-trivial $\eps$-heavy direction $\dir$ for a point $p \in \{(x, r), (c, y)\}$: i.e., a set of the form $\{(p'_i, i)\}_{i \in [k]}$ for distinct points $p'_1, \ldots, p'_k \in L^*_{\dir}(p)$ that are not $\eps$-common between $(x, r)$ and $(c, y)$. By \cref{def:omega-D1-D2}, $w^{(c,r)}_{D_1, D_2}(\mc{M})$ counts the number of $(x, y) \in I(c, r)$ such that $((x, y), 1) \in \mc{M}$ and such that $\mc{M}$ contains the sets obstructing all the directions in $D_1$ for $(x, r)$ and all the directions in $D_2$ for $(c, y)$. Hence, by inclusion-exclusion, we have  
  \[
  |\mc{A}'_\mc{M}(c, r)| = \sum_{D_1, D_2 \subseteq \mc{D}'_{\eps}}(-1)^{|D_1|+|D_2|}w^{(c, r)}_{D_1,D_2}(\mc{M}).
  \]
  Combining this with \cref{eq:omega-D1-D2}, we obtain that
  \begin{equation} \label{eq:product-for-absorbers}
    |\mc{A}'_\mc{M}(c, r)| = \frac{1}{n}\sum_{(x, y)\in I(c, r)}\prod_{\dir \in \mc{D}'_{\eps}}(1-(|L^*_{\dir}(x, r)|/n)^k) \prod_{\dir \in \mc{D}'_{\eps}}(1-(|L^*_{\dir}(c, y)|/n)^k) + O_{k,\eps}(n^{1-\gamma}).
  \end{equation}
  To prove a lower bound on this expression, we define 
  \[
  F \defeq \{(x, y) \in [n]^2 : |x-y| \ge 0.001n, |x+y-(n+1)| \ge 0.001n\}. 
  \] 
  Note that for each $p \in F$ and non-trivial direction $\dir$, we have $|L^*_{\dir}(p)| \le 0.999 n$: indeed, this is true for $\dir \in \{(1, 1), (1, -1)\}$ by the definition of $F$, and any other non-trivial direction $\dir$ satisfies $|L^*_{\dir}(p)| \le n/\|\dir\|_{\infty} \le n/2$. Since $1 - x \ge \exp(-10 x)$ for $x \in [0, 0.999]$, this implies that $1 - (|L^*_{\dir}(p)|/n)^k \ge \exp(-10 (|L^*_{\dir}(p)|/n)^k)$.
  Define 
  \[
  I_F(c, r) \defeq \{(x, y) \in I(c, r) : (x, r) \in F, (c, y) \in F\}.
  \]
  Since $[n]^2 \setminus F$ contains at most $0.004 n$ points in each column or row, \cref{irrelevant-lines} implies that $|I_F(c, r)| \ge |I(c, r)| - 0.008 n^2 \ge 0.01 n^2$. Therefore, we can lower bound the right-hand side of \cref{eq:product-for-absorbers} as follows:
  \begin{align*}
  |\mc{A}'_{\mc{M}}(c, r)| &\ge \frac{1}{n} \sum_{(x, y) \in I_F(c, r)} \exp\left(-10 \sum_{\dir \in \mc{D}'_{\eps}} \left((|L^*_{\dir}(x, r)|/n)^k + (|L^*_{\dir}(c, y)|/n)^k\right)\right) + O_{k, \eps}(n^{1-\gamma}) \\
  &\ge 0.01 n \cdot \exp\left(-20 \sum_{\dir \in \mc{D}'_{\eps}} \frac{1}{\|\dir\|_{\infty}^k}\right) + O_{k, \eps}(n^{1-\gamma}) \ge 0.01 n \cdot \exp\left(-20 \sum_{m = 1}^n \frac{4m}{m^3}\right) + O_{k, \eps}(n^{1-\gamma}) = \Omega(n),
  \end{align*}
  where in the last inequality we used that $k \ge 3$ and that for each $m \in \NN$ there are at most $4m$ directions $\dir$ with $\|\dir\|_{\infty} = m$.

  Suppose that $(x, y) \in \mc{A}'_\mc{M}(c, r)$ but $(x, y)$ is not an $\eps$-absorber in $S_{\init}$ for $(c, r)$. Then one of the following must hold:
  \begin{itemize}
    \item $(x, y) \in S_{\del}$;
    \item $(x, r) \in S_{\mc{M}}$ or $(c, y) \in S_{\mc{M}}$;
    \item some point $p' \in S_{\mc{M}}$ is $\eps$-common between $(x, r)$ and $(c, y)$.
  \end{itemize}
  The number of points satisfying the first item is at most $|S_{\del}| = O_k(\eps n)$. Since $S_{\mc{M}}$ contains at most $k$ points in each column or row, the number of points satisfying the second item is at most $2k$. By \cref{eq:omega-com}, the number of points satisfying the third item is $O_{k, \eps}(n^{\delta})$. Therefore, by our choice of $\rho$ and $\eps$ from \cref{eq:dependencies-of-parameters},  
  \[
  |\mc{A}_{S_{\init}, \eps}(c, r)| \ge |\mc{A}'_{\mc{M}}(c, r)| - O_k(\eps n) - 2k - O_{k, \eps}(n^{\delta}) \ge 2 \rho n. \qedhere
  \]
\end{proof}

\subsection{Randomised completion procedure} 

In this subsection, we describe a randomised procedure that completes the (random) initial configuration $S_{\init}$ given by \cref{initial-configuration} into a no-$(k+1)$-in-line set of size $kn$ with high probability. 

\begin{definition}[Completion procedure] \label{def:completion-procedure}
Let $T \defeq kn - |S_{\init}|$, and note that $T = O_k(\eps n)$ by \cref{initial-configuration}(a). Let $(c_1, \ldots, c_T)$ and $(r_1, \ldots, r_T)$ be the sequences of indices of ``unsaturated'' columns and rows (in arbitrary order), where the number of times each column/row index appears equals $(k - \text{\#\{points of $S_{\init}$ in that column/row\}})$. Consider the following randomised algorithm:  
\begin{algorithm}[H]
\caption{}
\label{algo:completion}
\begin{algorithmic}[1]
\State $S_0 \gets S_{\init}$
\For{$t = 1$ to $T$}
  \If{$\mathcal{A}_{S_{t-1}}(c_t, r_t) = \emptyset$}
    \State Abort
  \EndIf    
  \State Pick $(x_t, y_t) \in \mathcal{A}_{S_{t-1}}(c_t, r_t)$ uniformly at random
  \State $S_t \gets \bigl(S_{t-1} \setminus \{(x_t, y_t)\}\bigr) \cup \{(x_t, r_t), (c_t, y_t)\}$
\EndFor
\State \Return $S_T$
\end{algorithmic}
\end{algorithm}
\end{definition}
Note that each successful step of this algorithm increases the number of points of the current set in the target column $c_t$ and row $r_t$ by one, while keeping the number of its points in every other row and column unchanged. Thus, by the choice of $(c_1, \ldots, c_T)$ and $(r_1, \ldots, r_T)$ and the definition of absorbers (\cref{def:absorber}), each step preserves the no-$(k+1)$-in-line property of the current set and increases its size by one. In the next subsection, we will prove the following proposition stating that, with high probability (taking into account both the randomness of $S_{\init}$ and the random choices made by the algorithm), this algorithm does not abort.

\begin{proposition} \label{many-absorbers} 
  With probability at least $1 - \exp(-n^{\Omega_{k, \eps}(1)})$, \cref{algo:completion} does not abort, and, furthermore, for every $t \in [T]$ and $(c, r) \in [n]^2$ we have
  \[
  |\mc{A}_{S_{t-1}}(c, r)| \ge \rho n.
  \]
\end{proposition}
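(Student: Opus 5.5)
We aim to show that, for a fixed step $t$ and target $(c,r)$, the number of absorbers lost between $S_{\init}$ and $S_{t-1}$ is at most $\rho n$. We then take a union bound over all $t\le T$ and $(c,r)\in[n]^2$, which is only $\mathrm{poly}(n)$ events. For the start we use \cref{initial-configuration}(c): $|\mc{A}_{S_{\init},\eps}(c,r)|\ge 2\rho n$. Two discrepancies must be controlled. The first is between $\eps$-absorbers and genuine absorbers, namely lines that are $\eps$-light. The second is the change of the set from $S_{\init}$ to $S_{t-1}$, which removes the points $(x_s,y_s)$ and adds the points $(x_s,r_s),(c_s,y_s)$ for $s<t$. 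An $\eps$-absorber $(x,y)$ of $S_{\init}$ can fail to be an absorber of $S_{t-1}$ in only a few ways. Either $(x,y)$ was itself removed, which accounts for at most $T=O_k(\eps n)$ points, or a point in the row/column of $(x,r)$ or $(c,y)$ was added, which accounts for $O(kT)$ points since rows/columns have at most $k$ points. Otherwise some non-trivial line through $(x,r)$ or $(c,y)$ has at least $k$ points of $S_{t-1}$. We split the last case by whether this line is $\alpha$-heavy (with $\alpha\gg\eps$) or $\alpha$-light.

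For an $\alpha$-light line $\ell$ through $p\in\{(x,r),(c,y)\}$ with $k\ge 3$ points of $S_{t-1}\subseteq S_{\init}\cup S_{\mathrm{add}}$, we count configurations of $(x,y)$ together with $k$ points on a light line through $(x,r)$ or $(c,y)$. To bound this we use spreadness. Let $S_{\half}$ contain one uniformly random point of $\{(x_s,r_s),(c_s,y_s)\}$ for each $s<t$. We show that $S_{\half}$ is $(O(1/(\rho n)),C)$-spread conditionally on $S_{\init}$: at each step the absorber is uniform among at least $\rho n$ choices (on the good event), and a fixed point $q$ arises as $(x_s,r_s)$ only if $x_s$ is determined, which gives probability at most $k/(\rho n)$. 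By \cref{spread-union}, $S_{\init}\cup S_{\half}$ is $(O(1/(\rho n)),n^{\gamma})$-spread. Every $k$-set in $S_{t-1}$ has probability at least $2^{-k}$ of lying in $S_{\init}\cup S_{\half}$. Hence the expected $m$-th power of the number of such bad configurations is bounded by summing $(O(1/\rho n))^{|Y|}$ over unions $Y$ of $m$ configurations. Each configuration has $k+1$ points, and as in \cref{light-lines-triples} the number of them is about $n^{k+1}\sum_{\|\dir\|\ge 1/\alpha}\|\dir\|^{-k}\cdot n\approx \alpha n^{k+1}$, using $k\ge3$. This gives an expectation $O_k(\alpha n/\rho^{k+1})$. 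With $m=n^{c}$ moments and Markov's inequality, we get a tail bound of $\exp(-n^{\Omega(1)})$ for exceeding $\rho n/4$, using $\alpha\ll\rho$. Overlapping configurations cost extra factors of $n$ but save more in probability, as usual.

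For an $\alpha$-heavy but $\eps$-light line through $p$, the $\eps$-absorber property is irrelevant, so we apply the same moment argument with $\eps$ in place of $\alpha$, to $S_{\init}$ alone or together with added points. Separately, we handle non-trivial $\alpha$-heavy lines through $p$ that reach $k$ points because an added point lies on them. There are only $O(1/\alpha^2)$ heavy directions, and each added point $q$ lies on a heavy line through $(x,r)$ only for $O(1/\alpha^2)$ columns $x$, and similarly for rows. Thus each added point kills the absorbers in $O(1/\alpha^2)$ columns/rows, which still could be $O(kn/\alpha^2)$ absorbers. So we instead use the randomness: by spreadness of $S_{\half}$, the expected number of absorbers in $\mc{A}_{S_{\init},\eps}(c,r)$ whose $(x,r)$ or $(c,y)$ lies on a heavy line through a point of $S_{\half}$ is $O(T\cdot\alpha^{-2}\cdot k/(\rho n)\cdot n)$. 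Each such absorber lies in a row/column containing $k$ points of $S_{\init}$, so this is $O_k(\eps n/(\alpha^2\rho))\ll\rho n$, again upgraded to a concentration bound via high moments. This step, establishing the conditional spreadness of the added points (which requires the inductive lower bound $\rho n$ on absorbers at earlier steps) and organizing the joint moment computation over $S_{\init}$ and $S_{\half}$, is the main obstacle. We plan to run it as a stopping-time induction, with the absorbers frozen once the bound first fails.
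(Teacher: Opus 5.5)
Your overall plan matches the paper's. You start from the $2\rho n$ $\eps$-absorbers of \cref{initial-configuration}(c), lose $O_k(\eps n)$ to removed points and $O(k^2)$ to \cref{A1}, charge $\alpha$-heavy lines to added points, and control $\alpha$-light lines via spreadness of $S_{\init}\cup S_{\half}$ under a stopping time. However, the step that makes the halving trick work is false as you state it. It is not true that every $k$-set in $S_{t-1}$ lies in $S_{\init}\cup S_{\half}$ with probability at least $2^{-k}$. A set containing both $(x_s,r_s)$ and $(c_s,y_s)$ for some $s$ may have probability $0$ of doing so, since exactly one of the two enters $S_{\half}$ at step $s$. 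If completion steps kept placing both new points on a common light line, the resulting collinear sets would be invisible to $S_{\half}$, and spreadness would give you nothing about them. What rules this out is condition \cref{A2}, which you never use. The line through $(x_s,r_s)$ and $(c_s,y_s)$ is irrelevant and so contains at most $3$ grid points. By contrast, each of your configurations lies on a line through $(x,r)$ or $(c,y)$ and $k\ge 3$ further grid points. Hence each configuration meets each added pair in at most one point, and conditionally on all absorber choices the independent coins give it probability at least $2^{-k}$. You then still need to say how this first-moment comparison controls the tail. One option is conditional Jensen: $\EE[X_{\half}^m\mid\mc F]\ge(\EE[X_{\half}\mid\mc F])^m\ge 2^{-km}X^m$, where $X$ counts your configurations in $S_{t-1}$ and $X_{\half}$ those in $S_{\init}\cup S_{\half}$. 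The other is the reverse-Markov argument used in \cref{few-bad-triples}.

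Your $\alpha$-heavy case is muddled, though not fatally. Since $\alpha\ge\eps$, every $\alpha$-heavy direction is $\eps$-heavy, so ``$\alpha$-heavy but $\eps$-light'' lines do not exist. This inclusion is exactly why an $\alpha$-heavy line through $(x,r)$ or $(c,y)$ has at most $k-1$ points of $S_{\init}$. Moreover, the deterministic count you abandoned already works. Each of the at most $2T$ added points lies on $O(1/\alpha^2)$ non-trivial $\alpha$-heavy lines, and each such line meets row $r$ and column $c$ at most once. Each blocked point $(x,r)$ or $(c,y)$ accounts for at most $k$ candidates, because $S_{\init}$ has at most $k$ points per column and row. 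This gives $O_k((\eps/\alpha^2)n)$ with no probability at all. Your randomised substitute, as written, only accounts for the added points that happen to land in $S_{\half}$. Separately, the paper counts light lines more economically: it uses triples in $\mc{T}_L$ for the $2n$ rows and columns, with $S_{\comp}$ taken up to the stopping time, rather than $(k+1)$-point configurations for every $t$ and $(c,r)$. Your version also works once the overlap terms in the moment computation are checked.
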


If this event occurs, then the final set $S_T$ has size $kn$ and contains at most $k$ points on each line. Recalling the discussion at the beginning of this section, we conclude that \cref{many-absorbers} implies \cref{no-k+1-in-line-kn}.

\subsection{Analysing the completion procedure via spreadness} To prove \cref{many-absorbers}, we will bound the number of points $(x, y)$ which are $\eps$-absorbers in $S_{\init}$ (for some point $(c, r)$), but are not  $\eps$-absorbers at some later step of the completion procedure. The main reason why this can happen is that $(x,y)$ may violate \cref{A3} in \cref{def:absorber}: there may be a line through $(x, r)$ or $(c, y)$ containing at least $k$ points. It is easy to bound the impact of the $\alpha$-heavy lines: since $\eps \le \alpha$, an $\alpha$-heavy line through $(x, r)$ or $(c, y)$ can only contain $k$ points if some point was added to this line in the completion phase itself, and this can happen for at most $O_{k}((\eps/\alpha^2) n)$ different $\eps$-absorbers. Handling the $\alpha$-light lines is more difficult, and this is where we will use the spreadness of $S_{\init}$ (via \cref{light-lines-via-spread}).

\begin{definition}
  For a column or row $L$ of the grid $[n]^2$, let $\mc{T}_L$ be the set of triples of points in $[n]^2 \setminus L$ that lie on the same $\alpha$-light line intersecting $L$; that is, 
  \[
  \mc{T}_L \defeq \Big\{\{p_1, p_2, p_3\} \in \binom{[n]^2 \setminus L}{3} : \text{$p_1, p_2, p_3 \in L^*_{\dir}(p)$ for some $\alpha$-light direction $\dir$ and a point $p \in L$}\Big\}.
  \]
  For a set $S \subseteq [n]^2$, let $\mc{T}_L(S) \defeq \{T \in \mc{T}_L : T \subseteq S\}$.
\end{definition}

\begin{lemma} \label{light-lines-via-spread}
  Let $L$ be a column or row of the grid, and let $S \subseteq [n]^2$ be a $(c/n, 3m)$-spread random set for some $c \ge 1$ and a positive integer $m \le (\alpha n)^{1/3}$. Then, with probability at least $1 - \exp(-m)$, 
  \[  
  \left|\mc{T}_L(S)\right| = O(c^3 \alpha n).
  \]
\end{lemma}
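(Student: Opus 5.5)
The plan is to bound a high moment of $X \defeq |\mc{T}_L(S)|$ using spreadness, and then apply Markov's inequality. The idea is that spreadness makes $S$ look like a $p$-random set with $p = c/n$. Since an $\alpha$-light line through a point of $L$ is short, the expected count is $O(c^3\alpha n)$. A moment of order $m$ then converts this into a tail bound of $e^{-m}$.

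\emph{Step 1 (first moment).} First I bound $|\mc{T}_L|$. A triple in $\mc{T}_L$ is determined by a point $p \in L$ ($n$ choices), an $\alpha$-light direction $\dir$, and three points of $L^*_{\dir}(p)$. The last gives at most $(n/\|\dir\|_\infty)^3$ choices. There are at most $4m'$ directions with $\|\dir\|_\infty = m'$, so
\[
|\mc{T}_L| \le n \cdot n^3 \sum_{m' > 1/\alpha} \frac{4m'}{m'^3} = O(\alpha n^4).
\]
Spreadness with $|Y| = 3$ then gives $\EE[X] \le (c/n)^3 |\mc{T}_L| = O(c^3\alpha n)$.

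\emph{Step 2 (the $m$-th moment).} I write $\EE[X^m] = \sum_{(T_1,\dots,T_m)} \PP[T_1\cup\dots\cup T_m \subseteq S]$. Each union has size at most $3m$, so spreadness bounds each term by $(c/n)^{|T_1\cup\dots\cup T_m|}$. I will bound this sum by choosing the triples sequentially. Given $U = T_1\cup\dots\cup T_{i-1}$ with $|U| \le 3m$, I bound the weighted count
\[
\sum_{T_i \in \mc{T}_L} (c/n)^{|T_i \setminus U|},
\]
splitting according to $j = |T_i \cap U|$.
\begin{itemize}
\item $j = 0$: the contribution is at most $A \defeq C c^3 \alpha n$ by Step 1.
\item $j = 1$: fix $q \in U$ ($\le 3m$ choices). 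The line through $q$ has a light direction $\dir$, and the remaining two points lie on it, giving at most $(n/\|\dir\|_\infty)^2$ choices. Summing over $\dir$ gives $O(n^2\log n)$. With weight $(c/n)^2$, the total is $O(c^2 m\log n)$.
\item $j = 2$: two points of $U$ determine the line, leaving at most $n$ choices for the third point. With weight $c/n$, the total is $O(c\, m^2)$.
\item $j = 3$: the contribution is at most $\binom{3m}{3} = O(m^3)$.
\end{itemize}
Hence
\[
\EE[X^m] \le \bigl(A + O(c^2 m\log n + c m^2 + m^3)\bigr)^m.
\]
Using $c \ge 1$ and $m \le (\alpha n)^{1/3}$, we have $m^3 \le \alpha n$ and $c m^2 \le c^3\alpha n$. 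Also $c^2 m \log n \le c^3 \alpha n$ for $n$ large in terms of $\alpha$, since $m\log n = O((\alpha n)^{1/3}\log n)$. So $\EE[X^m] \le (C' c^3\alpha n)^m$.

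\emph{Step 3 (conclusion).} By Markov's inequality,
\[
\PP\bigl[X \ge eC'c^3\alpha n\bigr] \le \EE[X^m]/(eC'c^3\alpha n)^m \le e^{-m}.
\]

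The main obstacle is Step 2, specifically the overlap term with $j=1$. Triples through a fixed point $q$ are not controlled by $\alpha$-lightness alone, and give a $\log n$ factor. This is harmless only because $m$ is at most a small power of $n$. I would double-check that the sequential bookkeeping correctly accounts for ordered versus unordered triples; this costs at most constant factors absorbed into $C'$.
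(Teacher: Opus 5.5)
Your proposal is correct and is essentially the paper's proof: the paper also bounds $\EE[|\mc{T}_L(S)|^m]$ by $\bigl(\max_{|P|\le 3m} W(P)\bigr)^m$ by peeling off one triple at a time, splits by the overlap $|T\cap P|\in\{0,1,2,3\}$ into exactly your four terms $c^3\alpha n$, $c^2 m\log n$, $cm^2$, $m^3$ (with the $O(n^2\log n)$ pair count for the $j=1$ term), and finishes with Markov's inequality. Your remark that the $j=1$ term requires $n$ to be large in terms of $\alpha$ matches an assumption the paper makes implicitly through its parameter hierarchy $\alpha \gg 1/n$.
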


\cref{light-lines-triples} implies that for each point $p \in [n]^2$ there are $O(\alpha n^3)$ triples of points $\{p_1, p_2, p_3\} \subseteq [n]^2 \setminus \{p\}$ that lie on the same $\alpha$-light line passing through $p$. Here we also need a similar bound for pairs instead of triples.

\begin{lemma} \label{light-lines-pairs}
  For every point $p \in [n]^2$, there are $O(n^2 \log n)$ pairs of points $\{p_1, p_2\} \subseteq [n]^2 \setminus \{p\}$ that lie on the same line passing through $p$.
\end{lemma}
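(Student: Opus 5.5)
We count the pairs direction by direction, exactly as in the proof of \cref{light-lines-triples}. Every line through $p$ that contains at least one other grid point has a direction $\dir \in \mc{D}$, and distinct directions give distinct lines. The pair $\{p_1, p_2\}$ lies on the same line through $p$ if and only if both points belong to $L^*_{\dir}(p)$ for some $\dir \in \mc{D}$. Hence the number of such pairs is at most
\[
\sum_{\dir \in \mc{D}} \binom{|L^*_{\dir}(p)|}{2} \le \sum_{\dir \in \mc{D}} |L^*_{\dir}(p)|^2 .
\]

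Next we bound the size of each line. Consecutive grid points on a line with primitive direction $\dir$ differ by $\pm\dir$. Since the grid has side length $n$, we get $|L^*_{\dir}(p)| \le n/\|\dir\|_\infty$, which is the same bound used in \cref{light-lines-triples}. For each $m \in \NN$ there are at most $4m$ directions $\dir \in \mc{D}$ with $\|\dir\|_\infty = m$, and only $m < n$ occurs. Therefore the sum is at most
\[
\sum_{m=1}^{n} 4m \cdot \frac{n^2}{m^2} = 4n^2 \sum_{m=1}^{n} \frac{1}{m} = O(n^2 \log n).
\]

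There is no real obstacle here. The only point to note is that, unlike for triples, the exponent is now $2$, so the series $\sum_m m \cdot m^{-2}$ is harmonic rather than convergent. This is exactly where the $\log n$ factor comes from, and it reflects the $k=2$ phenomenon of \cref{rmk:k-illustration}. We must sum over all directions up to $\|\dir\|_\infty < n$ rather than just the light ones, but this only costs a $\log n$ factor, which the statement allows.
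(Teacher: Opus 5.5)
Your proposal is correct and follows essentially the same argument as the paper: you bound each line through $p$ in direction $\dir$ by $n/\|\dir\|_\infty$ points, use that at most $4m$ directions have $\|\dir\|_\infty = m$, and sum $4m \cdot n^2/m^2$ over $m \le n$ to get the harmonic sum $O(n^2 \log n)$.
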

\begin{proof}
  For each $m \in \NN$, there are at most $4m$ directions $\dir \in \mc{D}$ with $\|\dir\|_\infty = m$. For each direction $\dir \in \mc{D}$, we have $|L^*_{\dir}(p)| \le n/\|\dir\|_\infty$, and hence there are at most $(n/\|\dir\|_\infty)^2$ choices for the pair $\{p_1, p_2\}$ on this line. Taking the sum over all directions $\dir \in \mc{D}$, we conclude that the total number of such pairs is at most
  \[
  n^2 \sum_{\dir \in \mc{D}} \frac{1}{\|\dir\|_\infty^2} \le n^2 \sum_{m = 1}^n \frac{4m}{m^2} = O(n^2 \log n). \qedhere
  \]
\end{proof}

\begin{proof}[\textbf{Proof of \cref{light-lines-via-spread}}]
  Write $q \defeq c/n$. For a set of points $P \subseteq [n]^2$, let 
  \[
    W(P) \defeq \sum_{T \in \mc{T}_L} q^{|T \setminus P|}.
  \]
  Then, by $(q, 3m)$-spreadness of $S$,  
  \[
  \EE[|\mc{T}_L(S)|^m] \le \sum_{T_1, \ldots, T_m \in \mc{T}_L} \PP[T_1 \cup \ldots \cup T_m \subseteq S] \le \sum_{T_1, \ldots, T_m \in \mc{T}_L} q^{|\bigcup_{i=1}^m T_i|}.
  \]
  This expression can be bounded inductively as
  \begin{equation*} \begin{aligned}
  \sum_{T_1, \ldots, T_m \in \mc{T}_L} q^{|\bigcup_{i=1}^m T_i|} &\le \sum_{T_1, \ldots, T_{m-1} \in \mc{T}_L} \left(q^{|\bigcup_{i=1}^{m-1} T_i|} \sum_{T_m \in \mc{T}_L} q^{|T_m \setminus \bigcup_{i=1}^{m-1} T_i|}\right) \\
  &\le \left(\max_{|P| \le 3m} W(P)\right) \cdot \sum_{T_1, \ldots, T_{m-1} \in \mc{T}_L} q^{|\bigcup_{i=1}^{m-1} T_i|} \le \left(\max_{|P| \le 3m} W(P)\right)^m.
  \end{aligned} \end{equation*}
  Therefore, $\EE[|\mc{T}_L(S)|^m] \le \left(\max\limits_{|P| \le 3m} W(P)\right)^m$. To estimate $W(P)$, we note that 
  \begin{equation} \label{eq:W(P)-bound}
  W(P) \le q^3 W_0 + q^2 |P| W_1 + q |P|^2 W_2 + |P|^3 W_3, \quad \text{ where } \quad
  W_j = \max_{|X| = j} \big|\big\{T \in \mc{T}_L : X \subseteq T\big\}\big|.
  \end{equation}
  Summing the bound given by \cref{light-lines-triples} over all points $p \in L$, we obtain that $W_0 = O(\alpha n^4)$. \cref{light-lines-pairs} implies that $W_1 = O(n^2 \log n)$. Since a pair of points determines a line, we have $W_2 \le n$. Finally, we trivially have $W_3 \le 1$. Recalling that $c \ge 1$ and $m \le (\alpha n)^{1/3}$, we substitute these bounds into \cref{eq:W(P)-bound} and obtain that for every set $P \subseteq [n]^2$ of size at most $3m$,
  \[
  W(P) = O\left(c^3 \alpha n + c^2 m \log n + c m^2 + m^3\right) = O(c^3 \alpha n).
  \]
  Therefore, $\EE[|\mc{T}_L(S)|^m] \le \left(C \cdot c^3 \alpha n\right)^m$ 
  for some absolute constant $C$, and by Markov's inequality,
  \[
  \PP[|\mc{T}_L(S)| > 3C \cdot c^3 \alpha n] \le \frac{\EE[|\mc{T}_L(S)|^m]}{(3C \cdot c^3 \alpha n)^m} \le \exp(-m). \qedhere
  \] 
\end{proof}

\begin{definition}[Stopping time]
  Recall the setup in \cref{def:completion-procedure}. Define $\tau$ as the minimal $t \in [T]$ such that $|\mc{A}_{S_{t-1}}(c, r)| < \rho n$ for some $(c, r) \in [n]^2$ (i.e., the event in \cref{many-absorbers} does not occur), or $\tau = T + 1$ if no such $t$ exists. Let $S_{\comp}$ be the set of points added by \cref{algo:completion} up to step $\tau$:
  \[
  S_{\comp} \defeq \bigcup_{t < \tau} \{(x_t, r_t), (c_t, y_t)\}.
  \]
\end{definition}
In these terms, we need to show that $\tau = T + 1$ with high probability. One could attempt to approach this by applying \cref{light-lines-via-spread} to the set $S_{\init} \cup S_{\comp} \supseteq S_{\tau-1}$. Unfortunately, since $S_{\comp}$ is constructed by adding two points at a time, it does not satisfy the required spreadness condition. To avoid this issue, we instead work with the set $S_{\half}$ obtained by including a uniformly random one of the two points at each step.

\begin{definition}
  For each $1 \le t < \tau$, let $p_{\half}(t)$ be either $(x_t, r_t)$ or $(c_t, y_t)$ with probability $1/2$, independently of each other and of all other random choices in our procedure. Then, let $S_{\half} \defeq \{p_{\half}(t) : 1 \le t < \tau\}$.
\end{definition}

\begin{lemma} \label{spread-half}
If we condition on any outcome of $S_{\init}$, then the conditional distribution of $S_{\half}$ is $(4k^2/(\rho n), n^2)$-spread.
\end{lemma}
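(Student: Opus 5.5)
Fix an outcome of $S_{\init}$ and a set $Y \subseteq [n]^2$ with $|Y| = j \le n^2$. We need $\PP[Y \subseteq S_{\half} \mid S_{\init}] \le (4k^2/(\rho n))^j$. The event $Y \subseteq S_{\half}$ forces an injective map $\phi: Y \to [T]$ with $p_{\half}(\phi(p)) = p$ and $\phi(p) < \tau$ for each $p \in Y$. We take a union bound over such maps and bound each term by a product of conditional probabilities along the time order.

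First we control the number of admissible maps. If $p = (x,y)$ equals $p_{\half}(t)$, then either $p = (x_t, r_t)$, which requires $r_t = y$, or $p = (c_t, y_t)$, which requires $c_t = x$. Row index $y$ occurs at most $k$ times in $(r_1,\dots,r_T)$, and column index $x$ occurs at most $k$ times in $(c_1,\dots,c_T)$. So for each $p$ there are at most $2k$ candidate times $t$, together with a specified ``side'' (row-type or column-type). Hence the number of choices of $\phi$ together with sides is at most $(2k)^j$. Note that the sequences $(c_t),(r_t)$ are determined by $S_{\init}$, so this count is deterministic after conditioning.

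Next, fix such a $\phi$ with sides, and order $Y$ as $p_1,\dots,p_j$ with $t_1 < \dots < t_j$, where $t_i = \phi(p_i)$. For each $i$ we bound the probability, conditional on the whole history up to step $t_i - 1$ (including earlier coin flips), that $t_i < \tau$ and $p_{\half}(t_i) = p_i$. This requires two things:
\begin{itemize}
\item the fair coin chooses the correct side, with probability $1/2$;
\item the absorber chosen at step $t_i$ has the specific coordinate prescribed by $p_i$.
\end{itemize}
For the second, take the case $p_i = (x, r_{t_i})$. Then $x_{t_i} = x$ is required, and $(x_{t_i}, y_{t_i}) \in S_{t_i - 1}$ lies in column $x$, which contains at most $k$ points. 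Since $t_i < \tau$, we have $|\mc{A}_{S_{t_i-1}}(c_{t_i}, r_{t_i})| \ge \rho n$. The absorber is uniform among these, so the probability that it lies in column $x$ is at most $k/(\rho n)$. The row case is symmetric, using that each row contains at most $k$ points.

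To make this rigorous with the stopping time, we use the event $\{t_i < \tau\} \subseteq \{t_i - 1 < \tau\}$. On the event $t_i - 1 < \tau$, i.e.\ $\tau > t_i - 1$, the history is measurable, and the condition $\tau \ge t_i$ means that $|\mc{A}_{S_{t_i-1}}(c,r)| \ge \rho n$ for all $(c,r)$. This event is determined by the history before step $t_i$ (the definition of $\tau$ uses $S_{t-1}$), so the conditional bound applies. Multiplying gives at most $(k/(2\rho n))^j$ per map. Summing over the maps yields
\[
(2k)^j (k/(2\rho n))^j = (k^2/(\rho n))^j \le (4k^2/(\rho n))^j.
\]

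The main obstacle is precisely this careful handling of the stopping time: the lower bound on absorbers must be available at each relevant step, while the coin flips and absorber choices are revealed step by step. Handling it via the chain rule over the filtration, as above, settles it. A smaller point is that $S_{\init}$ must have at most $k$ points per column and row at every stage, which holds since each step preserves the no-$(k+1)$-in-line property before $\tau$.
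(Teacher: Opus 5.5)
Your proof is correct and is essentially the paper's argument: a union bound over at most $2k$ candidate times per point, combined with a per-step bound coming from the uniformly random choice among at least $\rho n$ absorbers and the fact that each row or column holds at most $k$ points. By also fixing the side and using the independent fair coin, you even obtain the slightly stronger bound $(k^2/(\rho n))^j$.

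One small slip: the event $\{\tau \ge t_i\}$ does not guarantee $|\mc{A}_{S_{t_i-1}}(c_{t_i},r_{t_i})| \ge \rho n$; that guarantee is the event $\{\tau > t_i\}$. However, $\{\tau > t_i\}$ is itself determined by $S_0,\dots,S_{t_i-1}$, so you can condition on it directly, and the detour through $\{t_i-1<\tau\}$ is unnecessary.
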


\begin{proof}
  All probabilities in this proof are assumed to be conditional on an arbitrary fixed outcome of $S_{\init}$. Consider a set of points $P = \{p_1, \ldots, p_N\} \subseteq [n]^2$. If $P \subseteq S_{\half}$, then there are \emph{distinct} steps $t_1, \ldots, t_N < \tau$ such that $p_i = p_{\half}(t_i)$ for each $i \in [N]$. Denote this event by $\mc{E}(t_1, \ldots, t_N)$. If it occurs, then for each $i \in [N]$ we have $p_i = (x_{t_i}, r_{t_i})$ or $p_i = (c_{t_i}, y_{t_i})$, and hence $p_i$ shares a column or a row both with the point $(c_{t_i}, r_{t_i})$ and the absorber $(x_{t_i}, y_{t_i})$ in $S_{t_i-1}$ for $(c_{t_i}, r_{t_i})$.
  
  By the definition of $\tau$, the condition $t_i < \tau$ implies that at step $t_i$ we have at least $\rho n$ absorbers to choose from. Since $S_{t_i-1}$ contains at most $k$ points in each column or row, at most $2k$ of them share a column or row with $p_i$. Therefore, 
  \begin{equation} \label{eq:probability-of-event-E}
  \PP[\mc{E}(t_1, \ldots, t_N)] \le \EE\left[\prod_{i = 1}^{N} \frac{\mathbf{1}_{t_i < \tau} \cdot 2k}{|\mc{A}_{S_{t_i-1}}(c_{t_i}, r_{t_i})|}\right] \le (2k/(\rho n))^{N}.
  \end{equation}
  On the other hand, the sequences $(c_1, \ldots, c_T)$ and $(r_1, \ldots, r_T)$ each contain at most $k$ copies of each column/row index, and hence for each $i \in [N]$ there are at most $2k$ values of $t_i$ such that $\mc{E}(t_1, \ldots, t_N)$ can possibly occur. So, taking the sum of \cref{eq:probability-of-event-E} over at most $(2k)^N$ possible sequences of times $t_1, \ldots, t_N$, we conclude that 
  $\PP[P \subseteq S_{\half}] \le (4k^2/(\rho n))^N$, as desired.
\end{proof}

\begin{lemma} \label{few-bad-triples}
  Let $L$ be a column or row of the grid. With probability at least $1 - \exp(-n^{\Omega_{k, \eps}(1)})$, we have 
  \[
  |\mc{T}_L(S_{\init} \cup S_{\comp})| = O_k\left((\alpha/\rho^3) n\right).
  \]
\end{lemma}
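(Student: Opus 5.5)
The plan is to replace $S_{\comp}$, which is not spread, by $S_{\half}$, which is (\cref{spread-half}), and then transfer the bound back to $S_{\comp}$ with a simple averaging argument. Set $c \defeq 2k + 4k^2/\rho = O_k(1/\rho)$. By \cref{initial-configuration}, $S_{\init}$ is $(2k/n, n^{\gamma})$-spread. By \cref{spread-half}, conditionally on any outcome of $S_{\init}$, $S_{\half}$ is $(4k^2/(\rho n), n^2)$-spread. Hence \cref{spread-union} shows that $S_{\init}\cup S_{\half}$ is $(c/n, n^{\gamma})$-spread. Choose $m \defeq \lfloor \min(n^{\gamma}/3, (\alpha n)^{1/3}) \rfloor = n^{\Omega_{k,\eps}(1)}$. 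Then \cref{light-lines-via-spread} gives
\[
Y \defeq |\mc{T}_L(S_{\init}\cup S_{\half})| \le B \defeq C c^3\alpha n = O_k((\alpha/\rho^3) n)
\]
with probability at least $1-\exp(-m)$.

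Next, we compare $X \defeq |\mc{T}_L(S_{\init}\cup S_{\comp})|$ with $Y$. Split $\mc{T}_L(S_{\init}\cup S_{\comp})$ into two parts. The first consists of the triples containing both points $(x_t,r_t)$ and $(c_t,y_t)$ added at a single step $t$. By \cref{A2}, the line through these two points is irrelevant, so it contains at most $3$ grid points. Thus each step contributes at most one such triple, and there are at most $T = O_k(\eps n)$ of them, which is negligible since $\eps \ll \alpha$. Let $X'$ be the number of remaining triples. In each of these, every point of $S_{\comp}\setminus S_{\init}$ comes from a distinct step.

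Condition on the entire run of the procedure (that is, on $S_{\init}$ and all choices of absorbers), so that only the independent coin flips defining $S_{\half}$ remain random. Each of the $X'$ triples survives in $S_{\init}\cup S_{\half}$ with probability at least $1/8$, because it needs at most three independent fair coins to land correctly. Hence $\EE[Y \mid \text{run}] \ge X'/8$. On the other hand, $Y \le X$: we have $S_{\half}\subseteq S_{\comp}$, and triples in $S_{\init}\cup S_{\half}$ automatically avoid the same-step configuration. So $Y \le X' + T$. A reverse-Markov argument on the bounded variable $Y$ then gives
\[
\PP\bigl[Y \ge X'/16 \bigm| \text{run}\bigr] \ge 1/32
\]
whenever $X' \ge 16T$, say. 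Consequently,
\[
\PP[X' > 16B] \le 32\,\PP[Y > B] \le 32\exp(-m),
\]
and therefore $X \le 16B + T = O_k((\alpha/\rho^3)n)$ with probability at least $1-\exp(-n^{\Omega_{k,\eps}(1)})$.

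The main subtlety is this transfer step. Two things need care. First, the coin flips for $S_{\half}$ are only defined for $t<\tau$, which matches the definition of $S_{\comp}$, and they are independent of the run, so conditioning on the run is legitimate. Second, the same-step triples have to be excluded and handled separately via the irrelevance condition \cref{A2}. A further check is that $m$ satisfies both constraints of \cref{light-lines-via-spread}, namely $3m \le n^{\gamma}$ (the spread range) and $m \le (\alpha n)^{1/3}$. This holds for our choice of $m$ since $n$ is large in terms of $k$ and $\eps$.
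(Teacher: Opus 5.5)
Your proposal is correct and follows the paper's proof essentially step for step. You form the spread union $S_{\init}\cup S_{\half}$, apply \cref{light-lines-via-spread} with $m = n^{\Omega_{k,\eps}(1)}$, and transfer the bound back to $S_{\init}\cup S_{\comp}$ by conditioning on the run and using a reverse-Markov inequality. The only difference is that your separate treatment of same-step triples is unnecessary. A triple in $\mc{T}_L$ lies on a line that also contains a grid point of $L$ outside the triple, so that line has at least four grid points and cannot be irrelevant. Hence, by \cref{A2}, such triples never occur and $X' = X$.
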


\begin{proof}
  By \cref{initial-configuration}, $S_{\init}$ is $(2k/n, n^{\gamma})$-spread for some $\gamma = \gamma(k, \eps) > 0$. By \cref{spread-half}, $S_{\half}$ is $(4k^2/(\rho n), n^2)$-spread, even after conditioning on an arbitrary outcome of $S_{\init}$. Hence, by \cref{spread-union}, $S_{\init} \cup S_{\half}$ is $(8k^2/(\rho n), n^{\gamma})$-spread. Applying \cref{light-lines-via-spread} with $m = n^{\gamma}/3$ to a column or row $L$, we conclude that with probability at least $1 - \exp(-n^{\gamma}/3)$
  \begin{equation} \label{eq:bound-for-half}
  |\mc{T}_L(S_{\init} \cup S_{\half})| \le (C \alpha/\rho^3) n
  \end{equation}
  for some $C = C(k)$. 

  It remains to ``transfer'' this bound to $S_{\init} \cup S_{\comp}$. Fix a certain outcome of $S_{\init}$ and $S_{\comp}$, and consider a triple of points $\{p_1, p_2, p_3\} \in \mc{T}_L(S_{\init} \cup S_{\comp})$. Then, the line containing $p_1, p_2, p_3$ also contains some point of $L$, and thus is not irrelevant. Hence, by \cref{A2} (from \cref{def:absorber}), it contains at most one point from each pair of the form $\{(x_t, r_t), (c_t, y_t)\}$ for $t < \tau$. So, conditionally on this outcome of $S_{\init}$ and $S_{\comp}$ as well as the choices of absorbers made during the completion procedure, each of the points $p_1, p_2, p_3$ is included in $S_{\init} \cup S_{\half}$ with probability at least $1/2$ independently of each other. Taking the sum over all triples (and averaging over the choices of absorbers), we obtain that $\EE\big[|\mc{T}_L(S_{\init} \cup S_{\half})| \,\big|\, S_{\init}, S_{\comp}\big] \ge |\mc{T}_L(S_{\init} \cup S_{\comp})|/8$, and consequently,
  \[
  \PP\Big[|\mc{T}_L(S_{\init} \cup S_{\half})| \ge \frac{1}{16}|\mc{T}_L(S_{\init} \cup S_{\comp})| \;\Big|\; S_{\init}, S_{\comp}\Big] \ge \frac{1}{16}.
  \]
  Denoting the event that $|\mc{T}_L(S_{\init} \cup S_{\comp})| > (16 C \alpha / \rho^3) n$ by $\mc{E}$, we have
  \begin{align*}
  \PP\Big[|\mc{T}_L(S_{\init} \cup S_{\half})| > (C \alpha / \rho^3) n\Big] \ge \EE\left[\mathbf{1}_{\mc{E}} \cdot \PP\Big[|\mc{T}_L(S_{\init} \cup S_{\half})| \ge \frac{1}{16}|\mc{T}_L(S_{\init} \cup S_{\comp})| \;\Big|\; S_{\init}, S_{\comp}\Big] \right].
  \end{align*}
  Together with \cref{eq:bound-for-half} this yields that $\PP[\mc{E}] \le 16 \exp(-n^{\gamma}/3)$, and thus $|\mc{T}_L(S_{\init} \cup S_{\comp})| \le (16 C \alpha / \rho^3) n$ with probability at least $1 - 16 \exp(-n^{\gamma}/3)$, as desired.
\end{proof}

\begin{proof}[\textbf{Proof of \cref{many-absorbers}}]
  By \cref{few-bad-triples} (and the union bound over $2n$ columns and rows), with probability at least $1 - \exp(-n^{\Omega_{k, \eps}(1)})$,  every column or row $L$ of the grid satisfies $|\mc{T}_L(S_{\init} \cup S_{\comp})| = O_k((\alpha/\rho^3) n)$. So, it suffices to show that if this event occurs, then $\tau = T + 1$.
  
  Suppose that $\tau = t \le T$, and thus $S_{t-1} \subseteq S_{\init} \cup S_{\comp}$. Fix an arbitrary point $(c, r) \in [n]^2$, and recall that $|\mc{A}_{S_{\init}, \eps}(c, r)| \ge 2 \rho n$ by \cref{initial-configuration}(c). If a ``candidate'' $\eps$-absorber $(x, y)$ in $S_{\init}$ for $(c, r)$ is not an actual absorber in $S_{t-1}$ for $(c, r)$, then one of the following must hold:
  \begin{itemize}
  \item \textbf{Case 1: $(x, y)$ does not lie in $S_{t-1} \supseteq S_{\init} \setminus \{(x_1, y_1), \ldots, (x_{t-1}, y_{t-1})\}$.} Since $t \le T = O_k(\eps n)$, this can affect at most $O_k(\eps n)$ candidates $(x, y)$;
  \item \textbf{Case 2: either $(x, r)$ or $(c, y)$ is already in $S_{t-1}$.}
  Since both $S_{t-1}$ and $S_{\init}$ contain at most $k$ points in each column or row, this affects at most $2k^2$ candidates $(x,y)$. 
  \item \textbf{Case 3: there is a non-trivial line $L$ through either $(x, r)$ or $(c, y)$ that contains at least $k$ other points of $S_{t-1}$.} 
  \begin{itemize}[leftmargin=8pt]
    \item \textbf{Case 3a: $L$ is $\alpha$-heavy.} Since $\eps \le \alpha$ and $(x, y)$ is an $\eps$-absorber in $S_{\init}$ for $(c, r)$, at least one of these $k$ points was added during the first $t-1$ steps of the completion procedure. Since there are $|\mc{D}'_{\alpha}| = O(1/\alpha^2)$ different $\alpha$-heavy lines through each of $O_k(\eps n)$ such points, the number of possible lines $L$ is $O_k((\eps/\alpha^2) n)$. Therefore, since $S_{\init}$ contains at most $k$ points in each column or row, this affects $O_k((\eps/\alpha^2)  n)$ candidates $(x, y)$. 

    \item \textbf{Case 3b: $L$ is $\alpha$-light.} Let $L_1$ and $L_2$ be the column and row of $(c, r)$, respectively. Since $k \ge 3$ and $S_{t-1} \subseteq S_{\init} \cup S_{\comp}$, the line $L$ must contain a triple of points in $\mc{T}_{L_1}(S_{\init} \cup S_{\comp})$ or $\mc{T}_{L_2}(S_{\init} \cup S_{\comp})$. Hence, the total number of points $(c, y) \in L_1$ and $(x, r) \in L_2$ blocked by such lines is at most
    \[
    |\mc{T}_{L_1}(S_{\init} \cup S_{\comp})| + |\mc{T}_{L_2}(S_{\init} \cup S_{\comp})| = O_k((\alpha/\rho^3) n).
    \]
    Again, since $S_{\init}$ contains at most $k$ points in each column or row, this affects $O_k((\alpha/\rho^3) n)$ candidates $(x, y)$. 
  \end{itemize}
\end{itemize}
Taking the sum over all these cases, we conclude that for each point $(c, r) \in [n]^2$
\[
|\mc{A}_{S_{t-1}}(c, r)| \ge 2 \rho n - O_k(\eps n) - 2k^2 - O_k((\eps/\alpha^2) n) - O_k((\alpha/\rho^3) n), 
\]
which is at least $\rho n$ by our choice of parameters from \cref{eq:dependencies-of-parameters}. However, by the definition of $\tau$, this implies that in fact $\tau > t$, contradicting our assumption that $\tau = t$.
\end{proof}

\section{Higher dimensions}
\label{sec:higher-dimensions}

The proof of \cref{higher-dimensions} is largely similar to the proof of \cref{no-k+1-in-line-approximate} in \cref{sec:approximate}. The main difference is that if one applies \cref{EGJ-spread} to the test function $w_{\light}$ defined in an analogous way, then the resulting bound \cref{eq:EGJ} would not be strong enough (because $B_{\Delta}(w)$ would be large compared to $w(E(\cH))$, at least for some values of $d$ and $s$). Instead, we do not work with $w_{\light}$ as a test function directly, but deduce a sufficient bound from the spreadness of our matching. 

Let $\mc{D}^{d, s}$ be the set of primitive\footnote{A lattice $\Gamma \subseteq \ZZ^d$ is called primitive if $\operatorname{span}_{\RR}(\Gamma) \cap \ZZ^d = \Gamma$.} rank-$s$ lattices $\Gamma \subseteq \ZZ^d$ such that $\Gamma \cap [-n, n]^d$ contains $s$ linearly independent vectors. For each $\eps > 0$, we say that a lattice $\Gamma \in \mc{D}^{d, s}$ is $\eps$-heavy if $1/\det\Gamma \ge \eps$, and $\eps$-light otherwise. Let $\mc{D}^{d, s}_{\eps} \defeq \{\Gamma \in \mc{D}^{d, s} : 1/\det\Gamma \ge \eps\}$ be the set of $\eps$-heavy lattices. We remark that lattices here play the same role as directions did in \cref{sec:approximate}, and $\det\Gamma$ is a suitable analogue of $\|\dir\|_{\infty}$ for this setting\footnote{Determinant $\det \Gamma$ of a rank-$s$ lattice $\Gamma$ equals $\sqrt{\det G}$, where $G = (\langle v_i, v_j \rangle)_{1 \le i, j \le s}$ is the Gram matrix of some generating vectors $v_1, \ldots, v_s$ of $\Gamma$.}. 

By a result of Schmidt \cite[Theorem~2]{schmidt-68}, the number of lattices in $\ZZ^d$ with determinant at most $M$ is $O_d(M^d)$, which immediately implies that 
\begin{equation} \label{eq:counting-lattices}
|\mc{D}^{d, s}_{\eps}| = O_d(1/\eps^d).
\end{equation}
Note that for each affine subspace $V$ of dimension $s$, the intersection $V \cap [n]^d$ is contained in a translate of some lattice $\Gamma \in \mc{D}^{d, s}$. Indeed, by adding more points if necessary, we can find a set $X$ such that $V \cap [n]^d \subseteq X \subseteq [n]^d$ and the affine span $V'$ of $X$ has dimension $s$. Then, $V' \cap \ZZ^d$ is a translate of a primitive rank-$s$ lattice with $s$ linearly independent vectors in $[-n, n]^d$.

\begin{lemma} \label{counting-lattice-points}
  For every lattice $\Gamma \in \mc{D}^{d, s}$ and every point $p \in [n]^d$, we have $|(p + \Gamma) \cap [n]^d| = O_d(n^s / \det\Gamma)$.
\end{lemma}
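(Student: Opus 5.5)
We need to bound the number of lattice points of $p+\Gamma$ inside the cube, where $\Gamma$ is a primitive rank-$s$ lattice whose span contains $s$ independent vectors of $[-n,n]^d$. The plan is a volume-packing argument inside the $s$-dimensional affine subspace $V \defeq p + \operatorname{span}_{\RR}\Gamma$. The key facts are:
\begin{itemize}
  \item the cosets of a fundamental domain tile $\operatorname{span}_\RR\Gamma$, each with $s$-volume $\det\Gamma$;
  \item the section $V \cap [0,n+1]^d$ has $s$-volume $O_d(n^s)$.
\end{itemize}

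First we make the packing precise. By translating, assume $p=0$ and count $\Gamma \cap Q$, where $Q \defeq [n]^d - p \subseteq [-n,n]^d$. Choose a basis $v_1,\dots,v_s$ of $\Gamma$ and let $P$ be the half-open fundamental parallelepiped; then $\mathrm{vol}_s(P)=\det\Gamma$. The translates $\gamma+P$ for $\gamma\in\Gamma$ are disjoint.

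The difficulty is that $P$ may be long and thin, so $\gamma+P$ need not stay close to $Q$. To fix this, use a reduced basis. By Minkowski/LLL-type reduction (or simply replace $P$ by the Voronoi cell of $\Gamma$ in $\operatorname{span}\Gamma$), we can take the fundamental domain $F$ to be the Voronoi cell. Its diameter is at most $O_s(\mu)$, where $\mu$ is the covering radius of $\Gamma$, and $\mu = O_s(\lambda_s)$ with $\lambda_s$ the $s$-th successive minimum.

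Since $\Gamma \cap [-n,n]^d$ (more precisely $\Gamma\cap[-2n,2n]^d$ after differences) contains $s$ linearly independent vectors, of Euclidean length at most $\sqrt d\, n$, we get $\lambda_s \le \sqrt d\, n$. Hence each translate $\gamma + F$ with $\gamma\in Q$ lies in $V\cap [-C_d n, C_d n]^d$.

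The $s$-dimensional volume of a section of a cube of side $O_d(n)$ by an $s$-flat is $O_d(n^s)$. This follows, for example, by projecting onto a coordinate $s$-plane: one of the $\binom{d}{s}$ coordinate projections restricted to $V$ has Jacobian at least $\binom{d}{s}^{-1/2}$, by the Cauchy–Binet identity for Plücker coordinates. The image of the section under that projection lies in a cube of side $O_d(n)$.

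Disjointness then gives
\[
|\Gamma\cap Q|\cdot\det\Gamma \le \mathrm{vol}_s\bigl(V\cap[-C_dn,C_dn]^d\bigr) = O_d(n^s),
\]
as desired.

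The main obstacle is ensuring the fundamental domain has diameter $O_d(n)$. This is exactly where the hypothesis that $\Gamma\cap[-n,n]^d$ contains $s$ independent vectors enters, via $\lambda_s\le\sqrt d\,n$ and the standard covering-radius bound $\mu\le \tfrac{\sqrt s}{2}\lambda_s$. Without this hypothesis, a very skewed lattice could violate the bound with $\det\Gamma$ tiny relative to the count, so this step must be handled carefully; the remaining steps are routine.
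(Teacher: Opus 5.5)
Your proof is correct, but it takes a different route from the paper.

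\textbf{The paper's route.} The paper's proof is essentially a citation. It views $\Gamma$ as a full-rank lattice in $V=\operatorname{span}_{\RR}\Gamma$. It uses $|(p+\Gamma)\cap[n]^d|\le|\Gamma\cap[-n,n]^d|$ and asserts that $[-n,n]^d\cap V$ has $s$-volume $O_d(n^s)$. It then invokes a lattice-point counting result of Widmer for sets of Lipschitz class. The hypothesis that $\Gamma\cap[-n,n]^d$ contains $s$ linearly independent vectors is used only to say that this set does not lie in a proper subspace of $V$, which is the non-degeneracy condition Widmer's corollary needs.

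\textbf{Your route.} You prove the upper bound from scratch by packing Voronoi cells:
\begin{itemize}
\item the hypothesis gives $\lambda_s\le\sqrt d\,n$;
\item the covering-radius bound $\mu\le\frac{\sqrt s}{2}\lambda_s$ places every cell $\gamma+F$ with $\gamma\in[n]^d-p$ inside a cube of side $O_d(n)$;
\item the cells overlap only on measure-zero boundaries, so volume additivity finishes the argument.
\end{itemize}
You also justify the section-volume bound via Cauchy--Binet, which the paper simply asserts.

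\textbf{Comparison.} Your version is self-contained and elementary. It makes transparent exactly where the spanning hypothesis enters: it bounds the size of the fundamental domain, which rules out the long-thin-lattice counterexamples. The cost is a few more lines. The paper's citation is shorter but relies on a more general, asymptotic counting theorem than is needed for a mere upper bound.

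One small point: your parenthetical about $[-2n,2n]^d$ ``after differences'' is unnecessary. The hypothesis already supplies $s$ independent vectors in $[-n,n]^d$, and your translation by $p$ already handles the passage from $[n]^d$ to $[-n,n]^d$.
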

\begin{proof}
  Let $V$ be the linear span of $\Gamma$. Note that $\dim V = s$, and that the $s$-dimensional volume of $[-n, n]^d \cap V$ is $O_d(n^s)$. Since $\Gamma \cap [-n, n]^d$ contains $s$ linearly independent vectors, it does not lie in any proper subspace of $V$. Thus, a result of Widmer \cite[Corollary 2.10]{widmer-12} (applied to $\Gamma$ as a lattice in $V$) implies that 
  \[
  |(p + \Gamma) \cap [n]^d| \le |\Gamma \cap [-n, n]^d| = O_d(n^s / \det(\Gamma)). \qedhere
  \]
\end{proof}

\begin{lemma} \label{counting-tuples-in-light-lattices}
  The number of sets in $[n]^d$ of size $d+2$ contained in some translate of an $\eps$-light lattice in $\mc{D}^{d, s}$ is $O_d(\eps n^{d + s(d+1)})$. 
\end{lemma}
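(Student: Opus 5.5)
We count by fixing an initial portion of the set, which pins down the lattice, and then filling in the rest. This mirrors the proof of \cref{light-lines-triples}. Call a set $X\in\binom{[n]^d}{d+2}$ \emph{bad} if $X\subseteq p+\Gamma$ for some $p\in[n]^d$ and some $\eps$-light $\Gamma\in\mc{D}^{d,s}$.

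\textbf{Step 1: fix a translate first.} Sum over pairs consisting of an $\eps$-light lattice $\Gamma$ and a point $p_0\in[n]^d$. By \cref{counting-lattice-points}, the translate $p_0+\Gamma$ contains $O_d(n^s/\det\Gamma)$ grid points. So the number of $(d+1)$-tuples of further points in it is $O_d\big((n^s/\det\Gamma)^{d+1}\big)$. This gives the bound
\[
n^d\sum_{\Gamma\in\mc{D}^{d,s}\setminus\mc{D}^{d,s}_\eps} O_d\Big(\frac{n^{s(d+1)}}{(\det\Gamma)^{d+1}}\Big).
\]
Every bad set is counted at least once, since we may choose $p_0\in X$.

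\textbf{Step 2: dyadic summation.} Schmidt's bound says that at most $O_d(M^d)$ lattices have determinant at most $M$. So for each $j\ge0$, the number of $\Gamma$ with $\det\Gamma\in(2^j/\eps,2^{j+1}/\eps]$ is $O_d((2^{j+1}/\eps)^d)$. Each such lattice contributes at most $(\eps/2^j)^{d+1}$. Summing over $j$ gives
\[
\sum_{\Gamma\ \eps\text{-light}}(\det\Gamma)^{-(d+1)}=O_d\Big(\sum_{j\ge0}\eps^{-d}2^{(j+1)d}\,\eps^{d+1}2^{-j(d+1)}\Big)=O_d(\eps).
\]
Alternatively, partial summation against $N(M)=O_d(M^d)$ gives the same result, because the exponent $d+1$ exceeds $d$. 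This is exactly why the sets have size $d+2$, i.e.\ $d+1$ points beyond $p_0$. Multiplying through gives the total $O_d(\eps n^{d+s(d+1)})$.

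\textbf{Main obstacle.} The only delicate point is that Schmidt's count covers all lattices in $\ZZ^d$ of a given determinant, while we want only rank-$s$ primitive ones. Since this is a subset, the upper bound $O_d(M^d)$ still applies, as already used in \cref{eq:counting-lattices}.

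A second concern is that \cref{counting-lattice-points} requires $\Gamma\in\mc{D}^{d,s}$. This holds by the definition of the sum, and the remark preceding \cref{counting-lattice-points} guarantees that any subset of an $s$-dimensional affine subspace lies in such a translate.

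Overcounting is harmless, since we only need an upper bound.
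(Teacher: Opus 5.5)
Your proposal is correct and follows essentially the same approach as the paper. You fix an $\eps$-light $\Gamma$ and one point of the set, and bound each of the remaining $d+1$ points by $O_d(n^s/\det\Gamma)$ choices via \cref{counting-lattice-points}. You then sum $(\det\Gamma)^{-(d+1)}$ over $\eps$-light lattices dyadically, using Schmidt's $O_d(M^d)$ count, to get $O_d(\eps)$.
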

\begin{proof}
  Fix a lattice $\Gamma \in \mc{D}^{d, s}$ and choose one point $p_1$ of the set. Then, each of the remaining points $p_2, \ldots, p_{d+2}$ must lie in $(p_1 + \Gamma) \cap [n]^d$, which has size at most $C n^s / \det\Gamma$ for some $C = C(d)$ by \cref{counting-lattice-points}. Combining this with \cref{eq:counting-lattices}, we conclude that the total number of such sets is at most
  \begin{align*}
  n^d \sum_{\substack{\Gamma \in \mc{D}^{d, s} \\ 1/\det \Gamma < \eps}} (C n^s/\det\Gamma)^{d+1} &= C^{d+1} n^{d+s(d+1)}\sum_{j = 0}^{\infty}\sum_{\substack{\Gamma \in \mc{D}^{d, s} \\ 1/\det \Gamma \in [\eps 2^{-j-1}, \eps 2^{-j})}} (1/\det\Gamma)^{d+1} \\
  &\le C^{d+1} n^{d+s(d+1)} \sum_{j = 0}^{\infty} |\mc{D}^{d, s}_{\eps2^{-j-1}}| \cdot (\eps 2^{-j})^{d+1} = O_d(\eps n^{d+s(d+1)}). \qedhere
  \end{align*}
\end{proof}

\begin{proof}[\textbf{Proof of \cref{higher-dimensions}}]
  By \cite[Theorem~1.3]{grebennikov-kwan-25}, there exists $K = K(d, \eta)$ such that the desired result holds true whenever $k \ge K$. Thus, we focus on the case $d+1 \le k < K$. In particular, we may assume that $n$ is sufficiently large in terms of $k$ (in addition to $d$ and $\eta$).

  Let $\eps \defeq c \eta$ for a sufficiently small constant $c = c(d, k) > 0$. Consider the following $|\mc{D}^{d, s}_{\eps}|$-uniform $|\mc{D}^{d, s}_{\eps}|$-partite hypergraph $\cH_{\eps}$: the vertices of $\mc H_\eps$ are the translates of $\eps$-heavy lattices which intersect $[n]^d$, and for each $p \in [n]^d$ we put an edge $\{p + \Gamma : \Gamma \in \mc{D}^{d, s}_{\eps}\}$.
  Let $\cH^{(k)}_{\eps}$ be the union of $k$ disjoint copies of $\cH_{\eps}$. We identify the edge set of $\cH^{(k)}_{\eps}$ with $[n]^d \times [k]$. Note that an affine subspace of dimension $s_0$ contains at most $n^{s_0}$ points of $[n]^d$. So, $\Delta(\cH^{(k)}_{\eps}) \le n^s$, and, since an intersection of two primitive rank-$s$ lattices has rank at most $s-1$, we have $\Delta_2(\cH^{(k)}_{\eps}) \le n^{s-1}$.

  As before, let $w_{\size}$ be the $1$-uniform test function such that $w_{\size}(e) = 1$ for each $e \in E(\cH^{(k)}_{\eps})$, and let $w_{\mathrm{rep}}:\binom{E(\cH^{(k)}_{\eps})}{2} \to \{0,1\}$ be the $2$-uniform test function such that $w_{\mathrm{rep}}(\{(p_1, i_1), (p_2, i_2)\}) = 1$ if and only if $p_1 = p_2$. Clearly, 
  \[
  w_{\size}(E(\cH^{(k)}_{\eps})) = k n^d, \quad B_{n^s}(w_{\size}) = n^{s}, \quad w_{\mathrm{rep}}(E(\cH^{(k)}_{\eps})) = \binom{k}{2} n^d, \quad B_{n^s}(w_{\mathrm{rep}}) = n^{2s}.
  \]
  Applying \cref{EGJ-spread} to $\cH^{(k)}_{\eps}$ with $\Delta = n^s$ and $r = |\mc{D}^{d, s}_{\eps}| = O_d(1/\eps^d)$ and $\delta = 1/d$ and $L = 2$, and test functions $w_{\size}$ and $w_{\mathrm{rep}}$, we obtain a $(2/n^s, d+2)$-spread matching $\mc{M}$ in $\cH^{(k)}_{\eps}$ such that 
  \[
  |\mc{M}| = w_{\size}(\mc{M}) = (1 + o(1)) k n^{d-s} \quad \text{ and } \quad w_{\mathrm{rep}}(\mc{M}) = O_k(n^{\max(d-2s, s/d)}) = o(n^{d-s}).
  \] 
  Viewing $\mc{M}$ as a subset of $[n]^d \times [k]$ and taking the projection onto the first coordinate, we obtain a $(2k/n^s, d+2)$-spread set $S_{\mc{M}} \subseteq [n]^d$ of size at least $|\mc{M}| - w_{\mathrm{rep}}(\mc{M}) = (1 + o(1)) k n^{d-s}$ that contains at most $k$ points in each translate of an $\eps$-heavy lattice. 

  Let $\mc{C}$ be the collection of subsets of $[n]^d$ of size $d+2$ contained in a translate of some $\eps$-light lattice. By \cref{counting-tuples-in-light-lattices}, $|\mc{C}| = O_d(\eps n^{d + s(d+1)})$. Let $S_{\del}$ be the subset of $S_{\mc{M}}$ obtained by including one point from each set $C \in \mc{C}$ contained in $S_{\mc{M}}$. Then, 
  \[
  \EE[|S_{\del}|] \le \sum_{C \in \mc{C}} \PP[C \subseteq S_{\mc{M}}] \le |\mc{C}| \cdot (2k/n^s)^{d+2} = O_{d, k}(\eps n^{d-s}),
  \]
  and hence there exists an outcome of $S_{\mc{M}}$ such that $|S_{\mc{M}} \setminus S_{\del}| \ge (1+o(1)) k n^{d-s} - O_{d, k}(\eps n^{d-s})$, which is at least $(1-\eta) k n^{d-s}$ by our choice of $\eps$. Since $k \ge d+1$, $S_{\mc{M}} \setminus S_{\del}$ contains at most $k$ points in each $s$-dimensional affine subspace by construction, completing the proof.
\end{proof}

\section{No-four-on-a-circle problem}
\label{sec:no-four-on-a-circle}

\subsection{Counting tools}

As noted in \cite{ghosal-goenka-keevash-25}, it follows from the work of Huxley and Konyagin \cite{huxley-konyagin-09} that most cyclic quadrilaterals in $[n]^2$ are isosceles trapezia\footnote{For us, an isosceles trapezium is a quadrilateral with a pair of parallel sides that share a common perpendicular bisector.}.

\begin{lemma}[{see \cite[Lemma 4.2]{ghosal-goenka-keevash-25}}] \label{asymmetric-count}
  The number of cyclic quadrilaterals in $[n]^2$ that are not isosceles trapezia is at most $n^{4 + \frac{18}{29} + o(1)}$ as $n \to \infty$.
\end{lemma}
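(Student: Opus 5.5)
The plan is to reduce to counting integer solutions of equal-distance conditions and then invoke the Huxley--Konyagin bound on ``non-symmetric'' cyclic quadrilaterals. Four concyclic grid points $p_1,\dots,p_4$ determine a circle whose centre $o$ is rational: it lies on the perpendicular bisectors of $p_1p_2$ and $p_1p_3$, both defined over $\ZZ$. Clearing denominators, $2o\in\frac{1}{D}\ZZ^2$, where $D$ is a $2\times 2$ integer determinant of size $O(n^2)$. Scaling by a suitable integer, we reach a setting where the four points lie on a circle with integer centre in a grid of polynomial size. Huxley and Konyagin count quadruples of lattice points on circles (with a bounded-size box constraint) that are not related by a symmetry of the circle sending the quadruple to itself. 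Our quantity is exactly the count of cyclic quadrilaterals in which no pair of opposite or adjacent sides shares a perpendicular bisector.

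\textbf{Step 1: rule out the symmetric configurations.} Suppose an inscribed quadrilateral is invariant, as a set, under a reflection through a line through the centre. Either the reflection swaps two pairs of vertices, which gives two parallel chords with a common perpendicular bisector, i.e.\ an isosceles trapezium. Or it fixes two vertices and swaps the other two, which gives a kite; a cyclic kite is symmetric about a diameter. The kites have to be either counted separately or handled by noting that a kite's fixed vertices lie on the axis while the other two form a chord perpendicular to it. Since we only need the bound to hold for non-trapezia, kites inscribed in circles must be shown to number $O(n^{4+o(1)})$. Counting them: choose the chord endpoints $p_3,p_4$ ($n^4$ choices), which fixes the axis, the perpendicular bisector. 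The points $p_1,p_2$ must be the two intersections of this axis with a circle through $p_3,p_4$ with centre on the axis, and both must be grid points. For fixed $p_3,p_4$, the lattice points on the axis number $O(n)$, and each choice of $p_1$ determines the circle and hence $p_2$. This gives $O(n^5)$, which is too many; sharpen it by counting over $p_1,p_2$ on the axis instead: $p_1,p_2$ arbitrary ($n^4$ choices), then $p_3$ lies on the circle with diameter $p_1p_2$, giving $n^{o(1)}$ choices by the divisor bound, and $p_4$ is determined. So kites contribute $n^{4+o(1)}$.

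\textbf{Step 2: asymmetric quadruples.} For the remaining quadrilaterals, with no symmetry, we cite \cite{huxley-konyagin-09}: the number of such quadruples of lattice points on circles with centres in a scaled grid, intersected with $[n]^2$, is at most $n^{4+18/29+o(1)}$. The rescaling from Step 1 must be controlled so that the box size in their theorem stays $O(n)$ in the relevant normalisation. This is the main obstacle: Huxley--Konyagin count circles with rational centres of bounded denominator, and one must check that the general concyclic grid quadruple falls within their hypotheses with the stated exponent. I would follow the reduction in \cite[Lemma 4.2]{ghosal-goenka-keevash-25}, which the statement cites, and simply invoke it.
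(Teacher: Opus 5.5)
Your proposal ends where the paper does: the paper gives no argument of its own for this lemma and simply cites \cite[Lemma 4.2]{ghosal-goenka-keevash-25}, a consequence of the Huxley--Konyagin bound \cite{huxley-konyagin-09}, so invoking that result, as you ultimately do, is the whole proof. Your separate count of $n^{4+o(1)}$ cyclic kites via the divisor bound is correct and harmless (needed only if the underlying theorem excluded kites), but you should drop the preliminary rescaling to an integer centre. Scaling by a factor as large as $n^{2}$ would put the points in a box of side $n^{O(1)}$ and ruin the exponent, and it is unnecessary because the cited bound is already stated for quadruples of points of $[n]^2$ on arbitrary circles, so the ``main obstacle'' you describe never arises.
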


Recall that $\mc{D}$ is the set of possible line directions, and $\mc{D}_{\eps} = \{\dir \in \mc{D} : 1/\|\dir\|_{\infty} \ge \eps\}$ is the set of $\eps$-heavy directions. Also, for a point $p \in \RR^2$, we write $L_{\dir}(p)$ for the intersection of the grid $[n]^2$ with the line through $p$ in direction $\dir$, and let $L^*_{\dir}(p) = L_{\dir}(p) \setminus \{p\}$. For each $\dir \in \mc{D}$ and $\eps \in (0, 1)$, define
\[
\mc{L}_{\dir} \defeq \{L_{\dir}(p) : p \in [n]^2\}, \qquad \mc{L}_{\eps} \defeq \bigsqcup_{\dir \in \mc{D}_{\eps}} \mc{L}_{\dir}.
\] 
We say that a line is $\eps$-heavy (resp. $\eps$-light) if its direction is $\eps$-heavy (resp. $\eps$-light). Similarly, we say that an isosceles trapezium is $\eps$-heavy (resp. $\eps$-light) if its parallel sides\footnote{Note that if a trapezium is a rectangle then both its pairs of parallel sides are $\eps$-heavy or $\eps$-light simultaneously.} are $\eps$-heavy (resp. $\eps$-light).

\begin{lemma} \label{light-trapezia}
  For every $\eps > 0$, the number of $\eps$-light isosceles trapezia in $[n]^2$ is $O(\eps n^5)$.
\end{lemma}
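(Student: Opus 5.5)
We count $\eps$-light isosceles trapezia by parametrising each one through its pair of parallel sides. An isosceles trapezium with vertices in $[n]^2$ has two parallel sides lying on two distinct parallel lines with a common direction $\dir \in \mc{D}\setminus\mc{D}_\eps$. The two sides share a common perpendicular bisector $\ell$. So the trapezium is determined by one side $\{p_1,p_2\}$ (a pair of grid points on a line of direction $\dir$) together with a choice of one vertex $p_3$ of the other side; the fourth vertex $p_4$ is then the reflection of $p_3$ in $\ell$. For a rectangle we may count it once for each pair of parallel sides; this only overcounts by a constant factor, and the footnote guarantees that both pairs are light simultaneously.

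The first step is to count the pairs $\{p_1,p_2\}$. For a fixed direction $\dir$ with $\|\dir\|_\infty=m$, the lines of direction $\dir$ partition $[n]^2$, and each such line contains at most $n/m+1 \le 2n/m$ grid points. Hence the number of pairs of grid points on a common line of direction $\dir$ is at most $\sum_{L\in\mc{L}_\dir}|L|^2 \le n^2\cdot 2n/m$.

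The second step is to count the choices of $p_3$. Given $p_1,p_2$, the vertex $p_3$ must lie on another line of direction $\dir$, so naively there are $n^2$ choices, which gives $n^5/m$ per direction. Summing this over the at most $4m$ directions with $\|\dir\|_\infty=m$ yields $\sum_m n^5\cdot 4 = O(n^6)$, which is too much. We therefore have to use the constraint on $p_4$ more carefully. Both $p_3$ and $p_4$ lie on the same line $L'\in\mc{L}_\dir$, and $p_3$, $p_4$ have the same midpoint projection onto $\dir$ as $p_1$, $p_2$. Instead of choosing $p_3$ freely, we count by choosing the two lines and the two sides directly.

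For a fixed $\dir$, the pair of sides consists of two lines $L, L'\in\mc{L}_\dir$ and two segments, one on each, whose midpoints are aligned perpendicularly to $\dir$. Parametrise positions along a line of direction $\dir$ by the integer step $t$ (grid points of a line are $p+t\dir$). We choose $L$ (at most $O(nm)$ lines meet the grid, since $|\mc{L}_\dir|\le 2nm$), then the pair on $L$ ($O((n/m)^2)$ ways), then $L'$ ($O(nm)$ ways). Perpendicularity of the midpoints then fixes the midpoint parameter of $p_3p_4$ on $L'$ up to $O(1)$ choices, and $p_3$ itself takes $O(n/m)$ values, which determines $p_4$. The total for direction $\dir$ is
\[
O\!\left(nm\cdot (n/m)^2\cdot nm\cdot n/m\right)=O(n^5/m).
\]
This is still $n^5/m$ per direction, so it still gives $\sum_m 4m\cdot n^5/m=O(n^6)$. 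The genuine saving must come from the perpendicular-alignment condition. The perpendicular direction to $\dir=(a,b)$ is $(-b,a)$, and the foot of the bisector on $L'$ is a grid point (or a half-grid point) only for a $1/m$ fraction of the pairs $(L,L')$: the midpoint of $p_1p_2$ moved by a multiple of $(-b,a)/\gcd$ must land in $\frac12\ZZ^2$ on $L'$, and along the perpendicular direction the consecutive lines of $\mc{L}_\dir$ are spaced by $1/\|\dir\|_2$ while the lattice step is $\|\dir\|_2$. So only $O(n/m)$ choices of $L'$, rather than $O(nm)$, are compatible with a given midpoint. This improves the count to $O(n^5/m^3)$ per direction, and hence to
\[
\sum_{m>1/\eps}4m\cdot O(n^5/m^3)=O(\eps n^5),
\]
as required.

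The main obstacle is making this alignment count rigorous. One has to show that, for a fixed midpoint $q$ of $p_1p_2$, the grid (or half-grid) points of the form $q+s(-b,a)$ lie on only $O(n/m+1)$ lines of $\mc{L}_\dir$ inside the grid. This follows because such points are spaced by $\|(-b,a)\|\ge m$ along a segment of length $O(n)$. The remaining work is to handle the parity and half-integer issues, and the $+1$ terms, which contribute $\sum_m 4m\cdot n^4\cdot O(1)$, a lower-order amount bounded by $O(n^5\cdot\eps)$ after using $m\le n$ and the $1/m$ factors.
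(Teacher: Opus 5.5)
Your argument is correct and, once the two abandoned $O(n^5/m)$ attempts are set aside, it is essentially the paper's count. You choose a side on a line of direction $\dir$ ($O(n^3/m)$ choices, i.e.\ a half-integer midpoint plus a vertex), then the opposite midpoint among the $O(n/m)$ half-integer points $q+\tfrac{s}{2}(-b,a)$ on the bisector, then a vertex of the opposite side ($O(n/m)$ choices), giving $O(n^5/m^3)$ per direction exactly as in the paper.

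The remaining blemishes are cosmetic. The alignment saving is a factor $1/m^2$, not $1/m$. The $+1$ terms need no separate treatment, since $n/m+1\le 2n/m$ for $m\le n$; your estimate $\sum_m 4m\cdot n^4\cdot O(1)$, as written, would be $O(n^6)$ and not lower order.
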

\begin{proof}
  First, we bound the number of isosceles trapezia with parallel sides in a given direction $\dir$. Each such trapezium is determined by the choice of the midpoints $p_1$ and $p_2$ of its parallel sides (note that they must be contained in the half-integer grid $\{(a/2, b/2) : a, b \in [2n]\}$ and lie on the same line in direction $\dir^{\perp} = (-b, a)$), and the choice of one of the vertices on each of the two parallel sides $v_1 \in L^*_{\dir}(p_1)$, $v_2 \in L^*_{\dir}(p_2)$. Then, we have at most $(2n)^2$ choices for $p_1$, at most $2n/\|\dir\|_{\infty}$ choices for $p_2$, and at most $n/\|\dir\|_{\infty}$ choices for each of $v_1$ and $v_2$. Therefore, the number of such trapezia is $O(n^5/\|\dir\|_{\infty}^3)$. 

  For each $m \in \NN$, the number of directions $\dir \in \mc{D}$ with $\|\dir\|_{\infty} = m$ is at most $4m$. Hence, taking the sum over all $\eps$-light directions, we conclude that the number of $\eps$-light isosceles trapezia is bounded by
  \[
  \sum_{\dir \in \mc{D} \setminus \mc{D}_{\eps}} O(n^5/\|\dir\|_{\infty}^3) = O\left(n^5 \sum_{m = \lfloor 1/\eps \rfloor + 1}^{n} \frac{4m}{m^3}\right) = O(\eps n^5). \qedhere
  \]
\end{proof}

For a pair of points $p_1, p_2 \in [n]^2$, let $\ell^{\perp}(p_1, p_2)$ be the line through the midpoint of the segment $p_1 p_2$ orthogonal to this segment (we later refer to such lines as \emph{bisectors}). Note that a set $S \subseteq [n]^2$ with no four points on a line contains no isosceles trapezia if and only if for all pairs of distinct points $p_1, p_2 \in S$, the bisectors $\ell^{\perp}(p_1, p_2)$ are different. For a direction $\dir \in \mc{D}$ and $\eps \in (0, 1)$, define 
\[
\mc{L}^{\perp}_{\dir} \defeq \{\ell^{\perp}(p_1, p_2) : p_1, p_2 \in [n]^2,\; p_2 \in L^*_{\dir}(p_1)\}, \qquad \mc{L}^{\perp}_{\eps} \defeq \bigsqcup_{\dir \in \mc{D}_{\eps}} \mc{L}^{\perp}_{\dir}.
\]
For a bisector $\ell \in \mc{L}^{\perp}_{\eps}$ and a point $p \in \RR^2$, let $R_{\ell}(p)$ be the point obtained by reflecting $p$ across $\ell$.

\subsection{Proof of \cref{no-four-on-a-circle-2n}} As discussed in the introduction (\cref{rmk:no-4-on-a-circle}), our proof combines two consecutive applications of \cref{EGJ-spread} with a deletion argument.

\begin{lemma}[First stage] \label{first-stage}  
  Fix $\eps \in (0, 1)$, and let $n$ be sufficiently large in terms of $\eps$. Then there exists a $(2/n, 4)$-spread random subset $S_1$ of $[n]^2$ that always satisfies the following properties:
  \begin{enumerate}
    \item[(a)] $|S_1| \ge (1-n^{-\Omega_{\eps}(1)}) n$;
    \item[(b)] $S_1$ contains at most one point on each $\eps$-heavy line;
    \item[(c)] for every line $L \in \mc{L}_{\eps}$ and bisector $\ell \in \mc{L}^{\perp}_{\eps}$, we have $|\{p \in S_1 : R_{\ell}(p) \in L\}| = O(n^{0.1})$;  
    \item[(d)] for every pair of distinct bisectors $\ell_1, \ell_2 \in \mc{L}^{\perp}_{\eps}$, we have 
    \[
    \big|\big\{(p_1, p_2) : p_1 \in S_1 \setminus \ell_1, \, p_2 \in S_1 \setminus \ell_2,\, R_{\ell_1}(p_1) = R_{\ell_2}(p_2)\big\}\big| = O(n^{0.1}).
    \]
  \end{enumerate}
\end{lemma}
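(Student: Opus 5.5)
We will apply \cref{EGJ-spread} to the hypergraph $\cH_\eps$ from \cref{def:hypergraph-approximate} (the case $k=1$), with $\Delta = n$, $r = |\mc{D}_\eps| = O(1/\eps^2)$, $\delta = 0.1$, and $L = 2$. As in \cref{sec:approximate}, we have $\Delta(\cH_\eps) = n$, $\Delta_2(\cH_\eps) = 1$, and $e(\cH_\eps) = n^2$. We take $S_1 \defeq \mc{M}$, viewed as a subset of $[n]^2$; no projection or deletion is needed since $k=1$. The spreadness conclusion of \cref{EGJ-spread} gives that $S_1$ is $((1+n^{-\gamma})/n, n^\gamma)$-spread, hence $(2/n,4)$-spread for large $n$. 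Property (b) is immediate because $\mc{M}$ is a matching, so each $\eps$-heavy line (a vertex) is covered at most once. Property (a) follows from the test function $w_{\size}$: since $w_{\size}(E) = n^2$ and $B_n(w_{\size}) = n$, \cref{eq:EGJ} gives $|\mc{M}| = (1\pm n^{-\gamma})n \pm 2n^{\delta}$.

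For (c) and (d), we introduce polynomially many (at most $n^{O(1)} \le \exp(n^{\gamma^2})$) additional test functions, one for each instance.

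For (c), fix $L \in \mc{L}_\eps$ and $\ell \in \mc{L}^\perp_\eps$, and let $w_{L,\ell}$ be the $1$-uniform indicator of $\{p : R_\ell(p) \in L\}$. The reflection $R_\ell$ is an isometry, so this set is contained in the line $R_\ell(L)$ and has size at most $n$. Hence $w_{L,\ell}(E) \le n$ and $B_n(w_{L,\ell}) = \Delta_1 n \le n$. Then \cref{eq:EGJ} gives $w_{L,\ell}(\mc{M}) \le (1+o(1))\cdot n/n + 2n\cdot n^{\delta}/n = O(n^{0.1})$.

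For (d), fix distinct $\ell_1, \ell_2$ and let $w_{\ell_1,\ell_2}$ be the cleaned $2$-uniform test function that counts pairs $\{p_1,p_2\}$ with $p_i \notin \ell_i$ and $R_{\ell_1}(p_1) = R_{\ell_2}(p_2)$, in either order. The condition with $p_1 = p_2$ forces $R_{\ell_1}(p) = R_{\ell_2}(p)$, which for distinct lines holds only at the isolated fixed points of $R_{\ell_2}^{-1}R_{\ell_1}$. This is a rotation or a translation, and its fixed points lie in $\ell_1 \cap \ell_2$, which is excluded by $p_i \notin \ell_i$. So only genuine pairs contribute (at most $O(1)$ diagonal terms in any case). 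Since $p_2 = R_{\ell_2}R_{\ell_1}(p_1)$ is determined by $p_1$, the total weight is $O(n^2)$. We also have $\Delta_1 = O(1)$ and $\Delta_2 = O(1)$, so $B_n = O(n^2)$. Then \cref{eq:EGJ} gives $w(\mc{M}) \le O(n^2)/n^2 + O(n^2 n^{\delta})/n^2 = O(n^{0.1})$. The ordered count in (d) is at most twice this plus the $O(1)$ diagonal terms.

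The main point requiring care is the bookkeeping. The bisector set $\mc{L}^\perp_\eps$ has polynomial size, so the number of test functions stays within $\exp(n^{\gamma^2})$. We also need the uniform bound $\Delta_1 = O(1)$ for the pair test function, which needs the observation that $p_1 \mapsto R_{\ell_2}R_{\ell_1}(p_1)$ is a bijection of the plane, so each point has at most two partners. All other steps are direct substitutions into \cref{EGJ-spread}.
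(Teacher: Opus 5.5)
Your proposal is correct and matches the paper's proof essentially verbatim. You use the same hypergraph $\cH_\eps$ with $\Delta=n$, $\delta=0.1$, $L=2$, and the same test functions $w_{\size}$, $w_{L,\ell}$, $w_{\ell_1,\ell_2}$. The only cosmetic difference is that the paper makes $w_{\ell_1,\ell_2}$ clean by explicitly requiring the line through $p_1,p_2$ to be $\eps$-light, which is exactly what your cleaning does. The paper also leaves the conditions $p_i\notin\ell_i$ out of the test function and uses them only to see that the pairs counted in (d) have $p_1\neq p_2$.
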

\begin{proof}
  Let $\cH_{\eps}$ be the following $|\mc{D}_{\eps}|$-uniform $|\mc{D}_{\eps}|$-partite hypergraph: the vertices of $\cH_{\eps}$ are  the $\eps$-heavy lines intersecting $[n]^2$, and for each $p\in [n]^2$ we put an edge $\{L \in V(\cH_{\eps}) : p \in L\}$.
  Clearly, $\Delta(\cH_{\eps}) \le n$ and $\Delta_2(\cH_{\eps}) \le 1$.
  
  Let $w_{\size}$ be the $1$-uniform test function on $E(\cH_{\eps})$ such that $w_{\size}(p) = 1$ for each $p \in E(\cH_{\eps})$. 
  Clearly, $w_{\size}(E(\cH_{\eps})) = n^2$ and $B_n(w_{\size}) = n$.
  Also, for a line $L \in \mc{L}_{\eps}$ and a bisector $\ell \in \mc{L}^{\perp}_{\eps}$, let $w_{L, \ell}:E(\cH_{\eps}) \to \{0,1\}$ be the $1$-uniform test function such that $w_{L, \ell}(p) = 1$ if and only if $R_{\ell}(p) \in L$. Clearly, $w_{L, \ell}(E(\cH_{\eps})) \le n$ and $B_n(w_{L, \ell}) = n$.
  Furthermore, for each pair of distinct bisectors $\ell_1, \ell_2 \in \mc{L}^{\perp}_{\eps}$, let $w_{\ell_1, \ell_2}:\binom{E(\cH_{\eps})}{2} \to \{0,1\}$ be the $2$-uniform test function such that $w_{\ell_1, \ell_2}(\{p_1, p_2\}) = 1$ if and only if 
  \begin{itemize}
    \item $R_{\ell_1}(p_1) = R_{\ell_2}(p_2)$ or $R_{\ell_1}(p_2) = R_{\ell_2}(p_1)$, and 
    \item the line through $p_1$ and $p_2$ is $\eps$-light (this ensures that $w_{\ell_1, \ell_2}$ is clean).
  \end{itemize}
  Note that, for every $p \in [n]^2$ and $\ell_1, \ell_2 \in \mc{L}^{\perp}_{\eps}$, if $w_{\ell_1, \ell_2}(\{p, p'\}) = 1$ then $p' \in \{R_{\ell_1}(R_{\ell_2}(p)), R_{\ell_2}(R_{\ell_1}(p))\}$. Hence, $w_{\ell_1, \ell_2}(E(\cH_{\eps})) \le 2n^2$ and $B_n(w_{\ell_1, \ell_2}) \le \max(2n, n^2) = n^2$.
  
  Applying \cref{EGJ-spread} to $\cH_{\eps}$ with $\Delta = n$, $r = |\mc{D}_{\eps}| = O(1/\eps^2)$, $\delta = 0.1$, $L = 2$, and test functions $w_{\size}$, $(w_{L, \ell})_{L \in \mc{L}_{\eps}, \ell \in \mc{L}^{\perp}_{\eps}}$, and $(w_{\ell_1, \ell_2})_{\ell_1, \ell_2 \in \mc{L}^{\perp}_{\eps}}$, we obtain a $(2/n, 4)$-spread random matching $\mc{M}$ in $\cH_{\eps}$ that satisfies $w_{\size}(\mc{M}) = (1 \pm n^{-\Omega_{\eps}(1)})n$, and $w_{L, \ell}(\mc{M}) = O(n^{0.1})$ for each $\eps$-heavy line $L$ and bisector $\ell \in \mc{L}^{\perp}_{\eps}$, and $w_{\ell_1, \ell_2}(\mc{M}) = O(n^{0.1})$ for each pair of distinct bisectors $\ell_1, \ell_2 \in \mc{L}^{\perp}_{\eps}$. This matching corresponds to a $(2/n, 4)$-spread random subset $S_1$ of $[n]^2$, which satisfies (b) by the definition of $\cH_{\eps}$, and satisfies (a) and (c) by the above estimates on $w_{\size}(\mc{M})$ and $w_{L, \ell}(\mc{M})$. 
  Finally, for each pair of points $(p_1, p_2)$ counted in property (d), $p_1 \neq p_2$ and (by property (b)) the line through $p_1$ and $p_2$ is $\eps$-light. Thus, the number of such pairs of points is at most $2w_{\ell_1, \ell_2}(\mc{M}) = O(n^{0.1})$.
\end{proof}

\begin{lemma}[Second stage] \label{second-stage}  
  Fix $\eps \in (0, 1)$, and let $n$ be sufficiently large in terms of $\eps$. Also fix an arbitrary outcome of $S_1$ given by \cref{first-stage}. Then there exists a $(2/n, 4)$-spread random subset $S_2$ of $[n]^2 \setminus S_1$ that always satisfies the following properties:
  \begin{enumerate}
    \item[(a)] $|S_2| \ge (1-n^{-\Omega_{\eps}(1)}) n$;
    \item[(b)] $S_2$ contains at most one point on each $\eps$-heavy line;
    \item[(c)] $S_1 \cup S_2$ does not contain $\eps$-heavy isosceles trapezia. 
  \end{enumerate}
\end{lemma}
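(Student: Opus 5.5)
We will again apply \cref{EGJ-spread}, but now to a hypergraph $\cH'_\eps$ that depends on the fixed outcome of $S_1$. Its vertex set consists of two kinds of vertices. The first kind are the $\eps$-heavy lines $L \in \mc{L}_\eps$, as in \cref{first-stage}. The second kind are the bisectors $\ell \in \mc{L}^{\perp}_\eps$. Its edges are indexed by the points $p \in [n]^2 \setminus S_1$. The edge of $p$ consists of:
\begin{itemize}
\item all $\eps$-heavy lines through $p$;
\item all bisectors $\ell^{\perp}(p, p')$ with $p' \in S_1$ such that $\ell^{\perp}(p,p') \in \mc{L}^{\perp}_\eps$ (that is, $p$ and $p'$ lie on a common $\eps$-heavy line);
\item all bisectors $\ell$ such that $p$ lies on $\ell$ and $\ell = \ell^{\perp}(p_1,p_2)$ for some pair $p_1,p_2 \in S_1$.
\end{itemize}
By (b) of \cref{first-stage}, $S_1$ has at most one point on each $\eps$-heavy line. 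So through $p$ there are at most $|\mc{D}_\eps|$ points $p'$, and the edge has size $O(1/\eps^2)$. The third type is a technicality needed to forbid trapezia that use a pair from $S_1$ together with two points of $S_2$ symmetric about their bisector. Instead of adding those vertices, it is probably cleaner to treat such configurations in $\cH'_\eps$ with codegree conditions, or to pad the edges to make the hypergraph uniform (using dummy vertices, since \cref{EGJ-spread} wants uniformity; we pad every edge to exactly $r = O(1/\eps^2)$ vertices with private dummy vertices).

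Given a matching $\mc{M}$, set $S_2 = \mc{M}$. The key check is that a matching yields property (c).

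\textbf{Two points of $S_1$, two of $S_2$.} An $\eps$-heavy isosceles trapezium in $S_1 \cup S_2$ has two $\eps$-heavy parallel sides $\{a,b\}$ and $\{c,d\}$ with a common bisector $\ell$. Neither side can lie entirely in $S_1$, nor entirely in $S_2$, by (b) for each set; this also means the pair-from-$S_1$ case above cannot actually occur for heavy sides. So each side is a mixed pair $p \in S_2$, $p' \in S_1$, and the bisector $\ell$ is a vertex lying in both edges. This contradicts $\mc{M}$ being a matching.

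\textbf{Three points of $S_1$, one of $S_2$.} This case is not excluded by the matching. Here one heavy side lies in $S_1$, which is impossible by (b).

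\textbf{Three points of $S_2$, one of $S_1$.} Similarly one heavy side lies in $S_2$, again impossible.

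So only the mixed case matters, and it is handled by the bisector vertices. This means we can drop the third type of vertex altogether.

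Next we verify the degree hypotheses, with $\Delta = n$.
\begin{itemize}
\item A line vertex has degree at most $n$.
\item A bisector $\ell \in \mc{L}^{\perp}_\eps$ lies in the edge of $p$ only if $R_\ell(p) \in S_1$ and $R_\ell(p)$ lies on an $\eps$-heavy line through $p$ of direction perpendicular to $\ell$. So its degree is at most $|S_1| \le n$.
\end{itemize}
Degrees are not exactly regular, but \cref{EGJ-spread} only requires $\Delta(\cH) \le \Delta$. Since $|S_2|\approx n$ requires near-regularity, we will get the size from the test function $w_{\size}$ via \cref{eq:EGJ}: $w_{\size}(\mc{M}) = (1\pm n^{-\gamma})|E|/n \pm O(n^{\delta})$, with $|E| = n^2 - |S_1| = n^2 - O(n)$. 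So (a) follows directly.

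The main obstacle is the codegree bound $\Delta_2 \le n^{1-\delta}$. The pairs to control are:
\begin{itemize}
\item \textbf{Line--line:} codegree at most $1$.
\item \textbf{Line $L$ and bisector $\ell$:} this counts $p\in L$ with $R_\ell(p)\in S_1$, i.e.\ $|\{p'\in S_1 : R_\ell(p')\in L\}|$, which is $O(n^{0.1})$ by (c) of \cref{first-stage}.
\item \textbf{Two bisectors $\ell_1, \ell_2$:} this counts $p$ with $R_{\ell_1}(p), R_{\ell_2}(p) \in S_1$. Writing $p_i = R_{\ell_i}(p)$, we get $R_{\ell_1}(p_1) = R_{\ell_2}(p_2)$, which is exactly what (d) of \cref{first-stage} bounds by $O(n^{0.1})$. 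The exceptional cases $p_i \in \ell_i$ mean $p = p_i \in S_1$, which is excluded since $p \notin S_1$.
\end{itemize}
So $\Delta_2 = O(n^{0.1}) \le n^{1-\delta}$ with $\delta = 0.1$ (after a harmless constant adjustment).

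Finally, \cref{EGJ-spread} gives a $((1+n^{-\gamma})/n, n^{\gamma})$-spread random matching, hence a $(2/n,4)$-spread set $S_2 \subseteq [n]^2\setminus S_1$. This gives property (b) via the line vertices and (c) by the case analysis above. We expect the codegree verification to be the crux, since it is precisely what properties (c) and (d) of the first stage were designed to supply.
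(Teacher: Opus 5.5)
Your proposal is correct and follows the paper's proof essentially step for step. The paper uses the same $2|\mc{D}_\eps|$-uniform hypergraph: its vertices are the $\eps$-heavy lines and the bisectors $\ell^{\perp}(p,p_{\dir})$, with dummy vertices $v_{\dir,p}$ when $L_{\dir}(p)$ misses $S_1$. It bounds degrees by $n$, obtains codegree $O(n^{0.1})$ from \cref{first-stage}(c),(d), and applies \cref{EGJ-spread} with $w_{\size}$ alone. You are also right to drop your third vertex type, since property (b) of both sets forces every $\eps$-heavy parallel side to be a mixed pair.
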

\begin{proof}
  Let $\cH'_{\eps}$ be the $2|\mc{D}_{\eps}|$-uniform $2|\mc{D}_{\eps}|$-partite hypergraph with ``line parts'' $(V_{\dir})_{\dir \in \mc{D}_{\eps}}$  and ``bisector parts'' $(V_{\dir}^{\perp})_{\dir \in \mc{D}_{\eps}}$ defined as follows:
  \begin{itemize}
    \item for each $\dir \in \mc{D}_{\eps}$, let $V_{\dir} \defeq \mc{L}_{\dir}$;
    \item for each $\dir \in \mc{D}_{\eps}$, let $V_{\dir}^{\perp} \defeq \mc{L}^{\perp}_{\dir} \cup \{v_{\dir, p} : p \in [n]^2\}$ (we refer to $v_{\dir, p}$ as a \emph{dummy vertex});
    \item the edges correspond to the points of $[n]^2 \setminus S_1$: namely, for each $\dir \in \mc{D}_{\eps}$, an edge corresponding to a point $p$ contains the vertices $L_{\dir}(p) \in V_{\dir}$ and $\ell^{\perp}(p, p_{\dir}) \in V_{\dir}^{\perp}$ where $p_{\dir}$ is the unique point of $S_1$ on the line $L_{\dir}(p)$ (if such a point $p_{\dir}$ does not exist then it contains the dummy vertex $v_{\dir, p} \in V_{\dir}^{\perp}$ instead).
  \end{itemize}

  First, we check that $\Delta(\cH'_{\eps}) \le n$. Indeed, for a vertex $L \in \mc{L}_{\dir}$, we have $\deg_{\cH'_{\eps}}(L) \le |L \cap [n]^2| \le n$. For a vertex $\ell \in \mc{L}^{\perp}_{\dir}$, we have $\deg_{\cH'_{\eps}}(\ell) \le |\{R_{\ell}(p) : p \in S_1\}| \le |S_1| \le n$. Each dummy vertex $v_{\dir, p}$ has degree at most one. 

  To bound the codegrees, consider two distinct vertices $v_1, v_2 \in V(\cH'_{\eps})$. If one of them is a dummy vertex, then $\deg_{\cH'_{\eps}}(v_1, v_2) \le 1$. If $v_1$ is a line $L_1 \in \mc{L}_{\dir_1}$ and $v_2$ is a line $L_2 \in \mc{L}_{\dir_2}$ then $\deg_{\cH'_{\eps}}(v_1, v_2) \le |L_1 \cap L_2| \le 1$. If $v_1$ is a line $L \in V_{\dir_1}$ and $v_2$ is a bisector $\ell \in \mc{L}^{\perp}_{\dir_2}$, then every edge $p$ containing both $v_1$ and $v_2$ satisfies $p \in L$ and $R_{\ell}(p) \in S_1$. Hence, the number of such edges is $O(n^{0.1})$ by \cref{first-stage}(c). 
  Finally, if $v_1$ and $v_2$ are two bisectors $\ell_1 \in \mc{L}^{\perp}_{\dir_1}$ and $\ell_2 \in \mc{L}^{\perp}_{\dir_2}$, then every edge $p$ containing both $v_1$ and $v_2$ satisfies $p \notin \ell_1 \cup \ell_2$, $R_{\ell_1}(p) \in S_1$, and $R_{\ell_2}(p) \in S_1$. Hence, the number of such edges is $O(n^{0.1})$ by \cref{first-stage}(d). In summary, we conclude that $\Delta_2(\cH'_{\eps}) = O(n^{0.1})$.

  As before, let $w_{\size}$ be the $1$-uniform test function on $E(\cH'_{\eps})$ such that $w_{\size}(p) = 1$ for each $p \in E(\cH'_{\eps})$. Applying \cref{EGJ-spread} to $\cH'_{\eps}$ with $\Delta = n$, $r = 2|\mc{D}_{\eps}| = O(1/\eps^2)$, $\delta = 0.1$, $L = 1$, and the test function $w_{\size}$, we obtain a $(2/n, 4)$-spread random matching $\mc{M}'$ in $\cH'_{\eps}$ such that $|\mc{M}'| = w_{\size}(\mc{M}') = (1 \pm n^{-\Omega_{\eps}(1)})n$. This matching corresponds to a $(2/n, 4)$-spread random subset $S_2$ of $[n]^2 \setminus S_1$ which satisfies (a) by the above size estimate, and satisfies (b) and (c) by the definition of $\cH'_{\eps}$.
\end{proof}

\begin{proof}[\textbf{Proof of \cref{no-four-on-a-circle-2n}}]
  Let $\eps \defeq c\eta$ for a sufficiently small absolute constant $c > 0$. Let $S_1$ be a random subset of $[n]^2$ given by \cref{first-stage}, and let $S_2$ be a random subset of $[n]^2 \setminus S_1$ given by \cref{second-stage}. Since $S_1$ is $(2/n, 4)$-spread, and $S_2$ is $(2/n, 4)$-spread conditionally on an arbitrary outcome of $S_1$, by \cref{spread-union}, $S_1 \cup S_2$ is $(4/n, 4)$-spread. 
  
  By \cref{first-stage}(a) and \cref{second-stage}(a), we have $|S_1 \cup S_2| \ge (2-n^{-\Omega_{\eps}(1)})n$. By \cref{first-stage}(b) and \cref{second-stage}(b), $S_1 \cup S_2$ contains at most 2 points on each $\eps$-heavy line. By \cref{second-stage}(c), $S_1 \cup S_2$ contains no $\eps$-heavy isosceles trapezia. Let $\mc{C}$ be the collection of sets $C \subseteq [n]^2$ of size $4$ such that $C$ is either contained in an $\eps$-light line, forms an $\eps$-light isosceles trapezium, or forms a cyclic quadrilateral that is not an isosceles trapezium. Let $S_{\del}$ be the subset of $S_1 \cup S_2$ obtained by including one point from each set $C \in \mc{C}$ contained in $S_1 \cup S_2$. By \cref{light-lines-triples,asymmetric-count,light-trapezia}, we have $|\mc{C}| = O(\eps n^5)$, and thus 
  \[
  \EE[|S_{\del}|] \le \sum_{C \in \mc{C}} \PP[C \subseteq S_1 \cup S_2] \le |\mc{C}| \cdot (4/n)^4 = O(\eps n).
  \]
  So, there is an outcome of $S_1$ and $S_2$ such that $|S_{\del}| = O(\eps n)$. In this case, the set $S \defeq (S_1 \cup S_2) \setminus S_{\del}$ satisfies 
  \[
  |S| \ge (2 - n^{-\Omega_{\eps}(1)}) n - O(\eps n) \ge (2 - \eta) n,
  \]
  by our choice of $\eps$, and does not contain four points on a circle or on a line by construction.
\end{proof}
\begin{remark}\label{rmk:third-stage}
The set $S = (S_1 \cup S_2) \setminus S_{\del}$ produced by our proof seems to be typically far from saturated, in the sense that there are still many points that can be added without violating the no-four-on-a-circle constraint. It is plausible that by tracking a lot of additional information in the first and second stages and using an inclusion-exclusion argument (similar to the one used in the proof of \cref{initial-configuration}), one could prove rigorous bounds along these lines, that would allow one to consider a third application of \cref{EGJ-spread}. This would provide roughly $\lambda n$ additional points (for some absolute constant $\lambda > 0$; back-of-the-envelope calculations suggest $\lambda \approx 0.19$) and imply that $f_{\cir}(n) \ge (2+\lambda-o(1))n$. 
\end{remark}

\bibliographystyle{plain}
\bibliography{main.bib}

@misc{conlon,
  author       = {David Conlon},
  howpublished = {Private communication},
  year         = {},
  month        = {},
  note         = {}
}

@inproceedings{dvir-lovett,
  title={Subspace evasive sets},
  author={Dvir, Zeev and Lovett, Shachar},
  booktitle={Proceedings of the forty-fourth annual ACM symposium on Theory of computing},
  pages={351--358},
  year={2012}
}

@inproceedings {jain-pham-24,
    AUTHOR = {Jain, Vishesh and Pham, Huy Tuan},
     TITLE = {Optimal thresholds for {L}atin squares, {S}teiner triple
              systems, and edge colorings},
 BOOKTITLE = {Proceedings of the 2024 {A}nnual {ACM}-{SIAM} {S}ymposium on
              {D}iscrete {A}lgorithms ({SODA})},
     PAGES = {1425--1436},
 PUBLISHER = {SIAM, Philadelphia, PA},
      YEAR = {2024},
      ISBN = {978-1-61197-791-2},
   MRCLASS = {68Q87},
  MRNUMBER = {4699302},
       DOI = {10.1137/1.9781611977912.57},
       URL = {https://doi.org/10.1137/1.9781611977912.57},
}

@article{alphaevolve,
      title={Alpha{E}volve: A coding agent for scientific and algorithmic discovery}, 
      author={Alexander Novikov and Ngân Vũ and Marvin Eisenberger and Emilien Dupont and Po-Sen Huang and Adam Zsolt Wagner and Sergey Shirobokov and Borislav Kozlovskii and Francisco J. R. Ruiz and Abbas Mehrabian and M. Pawan Kumar and Abigail See and Swarat Chaudhuri and George Holland and Alex Davies and Sebastian Nowozin and Pushmeet Kohli and Matej Balog},
      year={2025},
      eprint={2506.13131},
      archivePrefix={arXiv},
      primaryClass={cs.AI},
      url={https://arxiv.org/abs/2506.13131}, 
      note={Preprint, arXiv:2506.13131},
}

@article {BH05,
    AUTHOR = {Ball, S. and Hirschfeld, J. W. P.},
     TITLE = {Bounds on {$(n,r)$}-arcs and their application to linear
              codes},
   JOURNAL = {Finite Fields Appl.},
  FJOURNAL = {Finite Fields and their Applications},
    VOLUME = {11},
      YEAR = {2005},
    NUMBER = {3},
     PAGES = {326--336},
      ISSN = {1071-5797,1090-2465},
   MRCLASS = {51E22 (94B05)},
  MRNUMBER = {2158768},
MRREVIEWER = {Raymond\ Hill},
       DOI = {10.1016/j.ffa.2005.04.002},
       URL = {https://doi.org/10.1016/j.ffa.2005.04.002},
}

@book{dudeney-1917,
  author    = {Dudeney, H. E.},
  title     = {Amusements in Mathematics},
  year      = {1917},
  publisher = {Nelson},
  address   = {London}
}

@book{eppstein-18,
  author    = {Eppstein, David},
  title     = {Forbidden Configurations in Discrete Geometry},
  year      = {2018},
  publisher = {Cambridge University Press}
}

@incollection{brass-moser-pach-05,
  author    = {Brass, Peter and Moser, William O. J. and Pach, J{\'a}nos},
  title     = {Lattice Point Problems},
  booktitle = {Research Problems in Discrete Geometry},
  publisher = {Springer},
  address   = {New York},
  year      = {2005},
  pages     = {417--433},
  doi       = {10.1007/0-387-29929-7_11}
}

@unpublished{lefmann-12,
  author       = {Lefmann, Hanno},
  title        = {Extensions of the No-Three-In-Line Problem},
  year         = {2012},
  note         = {Preprint},
  url          = {https://www.tu-chemnitz.de/informatik/ThIS/downloads/publications/lefmann_no_three_submitted.pdf}
}

@article {HJSW-75,
    AUTHOR = {Hall, R. R. and Jackson, T. H. and Sudbery, A. and Wild, K.},
     TITLE = {Some advances in the no-three-in-line problem},
   JOURNAL = {J. Combinatorial Theory Ser. A},
  FJOURNAL = {Journal of Combinatorial Theory. Series A},
    VOLUME = {18},
      YEAR = {1975},
     PAGES = {336--341},
      ISSN = {0097-3165},
   MRCLASS = {10E99 (05B30)},
  MRNUMBER = {366817},
MRREVIEWER = {Richard\ K.\ Guy},
       DOI = {10.1016/0097-3165(75)90043-6},
       URL = {https://doi.org/10.1016/0097-3165(75)90043-6},
}

@article{guy-kelly-68,
  author  = {Guy, Richard K. and Kelly, Patrick A.},
  title   = {The No-Three-In-Line Problem},
  journal = {Canadian Mathematical Bulletin},
  volume  = {11},
  number  = {4},
  year    = {1968},
  pages   = {527--531},
  doi     = {10.4153/CMB-1968-062-3}
}

@misc {KNS-25,
      title={Settling the no-$(k+1)$-in-line problem when $k$ is not small}, 
      author={Benedek Kovács and Zoltán Lóránt Nagy and Dávid R. Szabó},
      year={2025},
      eprint={2502.00176},
      archivePrefix={arXiv},
      primaryClass={math.CO},
      url={https://arxiv.org/abs/2502.00176},
      note={Preprint, arXiv:2502.00176},
}

@misc {KNS-25-algebraic,
      title={Randomised algebraic constructions for the no-$(k+1)$-in-line problem}, 
      author={Benedek Kovács and Zoltán Lóránt Nagy and Dávid R. Szabó},
      year={2025},
      eprint={2508.07632},
      archivePrefix={arXiv},
      primaryClass={math.CO},
      url={https://arxiv.org/abs/2508.07632}, 
      note={Preprint, arXiv:2508.07632},
}

@unpublished{green-100-open-problems,
  author = {Green, Ben},
  title  = {100 Open Problems},
  note   = {Manuscript},
  url    = {https://people.maths.ox.ac.uk/greenbj/papers/open-problems.pdf},
  urldate= {2025-09-15}
}

@article {schmidt-68,
    AUTHOR = {Schmidt, Wolfgang M.},
     TITLE = {Asymptotic formulae for point lattices of bounded determinant
              and subspaces of bounded height},
   JOURNAL = {Duke Math. J.},
  FJOURNAL = {Duke Mathematical Journal},
    VOLUME = {35},
      YEAR = {1968},
     PAGES = {327--339},
      ISSN = {0012-7094,1547-7398},
   MRCLASS = {10.25},
  MRNUMBER = {224562},
MRREVIEWER = {E.\ S.\ Barnes},
       URL = {http://projecteuclid.org/euclid.dmj/1077377618},
}

@article{brass-knauer-03,
  author  = {Brass, Peter and Knauer, Christian},
  title   = {On counting point-hyperplane incidences},
  journal = {Comput. Geom.},
  volume  = {25},
  number  = {1--2},
  pages   = {13--20},
  year    = {2003},
  note    = {European Workshop on Computational Geometry (CG01)},
  doi     = {10.1016/S0925-7721(02)00127-X}
}

@article {sudakov-tomon-24,
    AUTHOR = {Sudakov, Benny and Tomon, Istv\'an},
     TITLE = {Evasive sets, covering by subspaces, and point-hyperplane
              incidences},
   JOURNAL = {Discrete Comput. Geom.},
  FJOURNAL = {Discrete \& Computational Geometry. An International Journal
              of Mathematics and Computer Science},
    VOLUME = {72},
      YEAR = {2024},
    NUMBER = {3},
     PAGES = {1333--1347},
      ISSN = {0179-5376,1432-0444},
   MRCLASS = {51E20 (52C10 94B05 94B27)},
  MRNUMBER = {4804978},
       DOI = {10.1007/s00454-023-00601-1},
       URL = {https://doi.org/10.1007/s00454-023-00601-1},
}

@misc {ghosal-goenka-keevash-25,
      title={On subsets of lattice cubes avoiding affine and spherical degeneracies}, 
      author={Anubhab Ghosal and Ritesh Goenka and Peter Keevash},
      year={2025},
      eprint={2509.06935},
      archivePrefix={arXiv},
      primaryClass={math.CO},
      url={https://arxiv.org/abs/2509.06935},
      note={To appear in Discrete Comput. Geom., arXiv:2509.06935}, 
}

@inproceedings {luria-simkin-22,
    AUTHOR = {Simkin, Michael and Luria, Zur},
     TITLE = {A lower bound for the {$n$}-queens problem},
 BOOKTITLE = {Proceedings of the 2022 {A}nnual {ACM}-{SIAM} {S}ymposium on
              {D}iscrete {A}lgorithms ({SODA})},
     PAGES = {2185--2197},
 PUBLISHER = {[Society for Industrial and Applied Mathematics (SIAM)],
              Philadelphia, PA},
      YEAR = {2022},
      ISBN = {978-1-61197-707-3},
   MRCLASS = {68W20},
  MRNUMBER = {4415123},
       DOI = {10.1137/1.9781611977073.86},
       URL = {https://doi.org/10.1137/1.9781611977073.86},
}

@article {simkin-23,
    AUTHOR = {Simkin, Michael},
     TITLE = {The number of {$n$}-queens configurations},
   JOURNAL = {Adv. Math.},
  FJOURNAL = {Advances in Mathematics},
    VOLUME = {427},
      YEAR = {2023},
     PAGES = {Paper No. 109127, 83},
      ISSN = {0001-8708,1090-2082},
   MRCLASS = {05A05 (05A16 05B30)},
  MRNUMBER = {4597952},
MRREVIEWER = {Eugenijus\ Manstavi\v cius},
       DOI = {10.1016/j.aim.2023.109127},
       URL = {https://doi.org/10.1016/j.aim.2023.109127},
}

@misc{grebennikov-kwan-25,
      title={No-$(k+1)$-in-line problem for large constant $k$}, 
      author={Alexandr Grebennikov and Matthew Kwan},
      year={2025},
      eprint={2510.17743},
      archivePrefix={arXiv},
      primaryClass={math.CO},
      url={https://arxiv.org/abs/2510.17743}, 
      note={Preprint, arXiv:2510.17743},
}

@article {ehard-glock-joos-20,
    AUTHOR = {Ehard, Stefan and Glock, Stefan and Joos, Felix},
     TITLE = {Pseudorandom hypergraph matchings},
   JOURNAL = {Combin. Probab. Comput.},
  FJOURNAL = {Combinatorics, Probability and Computing},
    VOLUME = {29},
      YEAR = {2020},
    NUMBER = {6},
     PAGES = {868--885},
      ISSN = {0963-5483,1469-2163},
   MRCLASS = {05C65 (05C15 05C70 05D15 05D40)},
  MRNUMBER = {4173135},
MRREVIEWER = {Ioan\ Tomescu},
       DOI = {10.1017/s0963548320000280},
       URL = {https://doi.org/10.1017/s0963548320000280},
}

@article{widmer-12,
  author   = {Widmer, Martin},
  title    = {Lipschitz class, narrow class, and counting lattice points},
  journal  = {Proceedings of the American Mathematical Society},
  volume   = {140},
  number   = {2},
  pages    = {677--689},
  year     = {2012},
  doi      = {10.1090/S0002-9939-2011-10926-2},
  mrnumber = {2846337}
}

@article {BMNR-20,
    AUTHOR = {B\'ar\'any, Imre and Martin, Greg and Naslund, Eric and
              Robins, Sinai},
     TITLE = {Primitive points in rational polygons},
   JOURNAL = {Canad. Math. Bull.},
  FJOURNAL = {Canadian Mathematical Bulletin. Bulletin Canadien de
              Math\'ematiques},
    VOLUME = {63},
      YEAR = {2020},
    NUMBER = {4},
     PAGES = {850--870},
      ISSN = {0008-4395,1496-4287},
   MRCLASS = {52C05 (52B20)},
  MRNUMBER = {4176774},
MRREVIEWER = {Mizan\ R.\ Khan},
       DOI = {10.4153/s0008439520000090},
       URL = {https://doi.org/10.4153/s0008439520000090},
}

@misc{bowtell-keevash-21,
      title={The $n$-queens problem}, 
      author={Candida Bowtell and Peter Keevash},
      year={2021},
      eprint={2109.08083},
      archivePrefix={arXiv},
      primaryClass={math.CO},
      url={https://arxiv.org/abs/2109.08083}, 
      note={Preprint, arXiv:2109.08083},
}

@misc{prellberg-26,
      title={Constraint Satisfaction Programming for the No-three-in-line Problem}, 
      author={Thomas Prellberg},
      year={2026},
      eprint={2602.07751},
      archivePrefix={arXiv},
      primaryClass={math.CO},
      url={https://arxiv.org/abs/2602.07751}, 
      note={Preprint, arXiv:2602.07751},
}

@article {roth-51,
    AUTHOR = {Roth, K. F.},
     TITLE = {On a problem of {H}eilbronn},
   JOURNAL = {J. London Math. Soc.},
  FJOURNAL = {The Journal of the London Mathematical Society},
    VOLUME = {26},
      YEAR = {1951},
     PAGES = {198--204},
      ISSN = {0024-6107},
   MRCLASS = {10.0X},
  MRNUMBER = {41889},
MRREVIEWER = {P. Scherk},
       DOI = {10.1112/jlms/s1-26.3.198},
       URL = {https://doi.org/10.1112/jlms/s1-26.3.198},
}

@article{DKP-26,
  author  = {Delcourt, Michelle and Kelly, Tom and Postle, Luke},
  title   = {Thresholds for {$(n,q,2)$}-{S}teiner systems via refined absorption},
  journal = {Mathematical Proceedings of the Cambridge Philosophical Society},
  year    = {2026},
  pages   = {1--20},
  doi     = {10.1017/S0305004126102059}
}

@article {SSS-23,
    AUTHOR = {Sah, Ashwin and Sawhney, Mehtaab and Simkin, Michael},
     TITLE = {Threshold for {S}teiner triple systems},
   JOURNAL = {Geom. Funct. Anal.},
  FJOURNAL = {Geometric and Functional Analysis},
    VOLUME = {33},
      YEAR = {2023},
    NUMBER = {4},
     PAGES = {1141--1172},
      ISSN = {1016-443X,1420-8970},
   MRCLASS = {05B07 (60C05)},
  MRNUMBER = {4616696},
MRREVIEWER = {Luc\ Teirlinck},
       DOI = {10.1007/s00039-023-00639-6},
       URL = {https://doi.org/10.1007/s00039-023-00639-6},
}

@misc{keevash-22,
      title={The optimal edge-colouring threshold}, 
      author={Peter Keevash},
      year={2022},
      eprint={2212.04397},
      archivePrefix={arXiv},
      primaryClass={math.CO},
      url={https://arxiv.org/abs/2212.04397}, 
      note={Preprint, arXiv:2212.04397},
}

@article{KKKMO-23,
    AUTHOR = {Kang, Dong Yeap and Kelly, Tom and K\"uhn, Daniela and
              Methuku, Abhishek and Osthus, Deryk},
     TITLE = {Thresholds for {L}atin squares and {S}teiner triple systems:
              bounds within a logarithmic factor},
   JOURNAL = {Trans. Amer. Math. Soc.},
  FJOURNAL = {Transactions of the American Mathematical Society},
    VOLUME = {376},
      YEAR = {2023},
    NUMBER = {9},
     PAGES = {6623--6662},
      ISSN = {0002-9947,1088-6850},
   MRCLASS = {05B15 (05B05 05B07 05C80)},
  MRNUMBER = {4630786},
MRREVIEWER = {Carl\ Johan\ Casselgren},
       DOI = {10.1090/tran/8954},
       URL = {https://doi.org/10.1090/tran/8954},
}

@misc{dong-xu-25,
      title={Large grid subsets without many cospherical points}, 
      author={Zichao Dong and Zijian Xu},
      year={2025},
      eprint={2506.18113},
      archivePrefix={arXiv},
      primaryClass={math.CO},
      url={https://arxiv.org/abs/2506.18113}, 
      note={Preprint, arXiv:2506.18113},
}

@book{guy-81,
  author    = {Richard K. Guy},
  title     = {Unsolved Problems in Number Theory},
  series    = {Unsolved Problems in Intuitive Mathematics},
  volume    = {1},
  publisher = {Springer-Verlag},
  address   = {New York},
  year      = {1981}
}

@phdthesis{thiele-thesis,
  author  = {Torsten Thiele},
  title   = {Geometric Selection Problems and Hypergraphs},
  school  = {Institut f{\"u}r Mathematik II, Freie Universit{\"a}t Berlin},
  address = {Berlin},
  year    = {1995},
  type    = {{PhD} thesis}
}

@article{thiele-95,
  author    = {Thiele, Torsten},
  title     = {The no-four-on-circle problem},
  journal   = {Journal of Combinatorial Theory, Series A},
  volume    = {71},
  number    = {2},
  pages     = {332--334},
  year      = {1995},
  doi       = {10.1016/0097-3165(95)90007-1},
  mrnumber  = {1342453}
}

@article{molloy-reed-00,
  author = {Molloy, Michael and Reed, Bruce},
  title = {Near-optimal list colorings},
  journal = {Random Structures \& Algorithms},
  volume = {17},
  number = {3--4},
  pages = {376--402},
  doi = {10.1002/1098-2418(200010/12)17:3/4<376::AID-RSA10>3.0.CO;2-0},
  url = {https://onlinelibrary.wiley.com/doi/abs/10.1002/1098-2418%28200010/12%2917%3A3/4%3C376%3A%3AAID-RSA10%3E3.0.CO%3B2-0},
  eprint = {https://onlinelibrary.wiley.com/doi/pdf/10.1002/1098-2418%28200010/12%2917%3A3/4%3C376%3A%3AAID-RSA10%3E3.0.CO%3B2-0},
  year = {2000}
}

@article {huxley-konyagin-09,
    AUTHOR = {Huxley, M. N. and Konyagin, S. V.},
     TITLE = {Cyclic polygons of integer points},
   JOURNAL = {Acta Arith.},
  FJOURNAL = {Acta Arithmetica},
    VOLUME = {138},
      YEAR = {2009},
    NUMBER = {2},
     PAGES = {109--136},
      ISSN = {0065-1036,1730-6264},
   MRCLASS = {11P21 (11E25)},
  MRNUMBER = {2520131},
MRREVIEWER = {Don\ Redmond},
       DOI = {10.4064/aa138-2-2},
       URL = {https://doi.org/10.4064/aa138-2-2},
}

@article {glock-joos-kim-kuhn-lichev-24,
    AUTHOR = {Glock, Stefan and Joos, Felix and Kim, Jaehoon and K\"uhn,
              Marcus and Lichev, Lyuben},
     TITLE = {Conflict-free hypergraph matchings},
   JOURNAL = {J. Lond. Math. Soc. (2)},
  FJOURNAL = {Journal of the London Mathematical Society. Second Series},
    VOLUME = {109},
      YEAR = {2024},
    NUMBER = {5},
     PAGES = {Paper No. e12899, 78},
      ISSN = {0024-6107,1469-7750},
   MRCLASS = {05C65 (05C70)},
  MRNUMBER = {4745877},
MRREVIEWER = {Yan\ Wang},
       DOI = {10.1112/jlms.12899},
       URL = {https://doi.org/10.1112/jlms.12899},
}

@article {alon-yuster-05,
    AUTHOR = {Alon, Noga and Yuster, Raphael},
     TITLE = {On a hypergraph matching problem},
   JOURNAL = {Graphs Combin.},
  FJOURNAL = {Graphs and Combinatorics},
    VOLUME = {21},
      YEAR = {2005},
    NUMBER = {4},
     PAGES = {377--384},
      ISSN = {0911-0119,1435-5914},
   MRCLASS = {05C70 (05C65)},
  MRNUMBER = {2209008},
MRREVIEWER = {Andr\'e\ E.\ K\'ezdy},
       DOI = {10.1007/s00373-005-0628-x},
       URL = {https://doi.org/10.1007/s00373-005-0628-x},
}

@article {pippenger-spencer-89,
    AUTHOR = {Pippenger, Nicholas and Spencer, Joel},
     TITLE = {Asymptotic behavior of the chromatic index for hypergraphs},
   JOURNAL = {J. Combin. Theory Ser. A},
  FJOURNAL = {Journal of Combinatorial Theory. Series A},
    VOLUME = {51},
      YEAR = {1989},
    NUMBER = {1},
     PAGES = {24--42},
      ISSN = {0097-3165,1096-0899},
   MRCLASS = {05C65 (05C15 05C70)},
  MRNUMBER = {993646},
       DOI = {10.1016/0097-3165(89)90074-5},
       URL = {https://doi.org/10.1016/0097-3165(89)90074-5},
}

@article {suk-zeng-26,
    AUTHOR = {Suk, Andrew and Zeng, Ji},
     TITLE = {On higher dimensional point sets in general position},
   JOURNAL = {Combin. Probab. Comput.},
  FJOURNAL = {Combinatorics, Probability and Computing},
    VOLUME = {35},
      YEAR = {2026},
    NUMBER = {1},
     PAGES = {134--148},
      ISSN = {0963-5483,1469-2163},
   MRCLASS = {52C35 (52C10)},
  MRNUMBER = {5021264},
       DOI = {10.1017/s0963548325100254},
       URL = {https://doi.org/10.1017/s0963548325100254},
}

\appendix\section{Deduction of \cref{EGJ-spread} from \cite{ehard-glock-joos-20}}
\label{sec:appendix}

First, we deduce the following version of \cref{EGJ-spread} that requires that $w(E(\cH))$ is large compared to $B_{\Delta}(w)$ for each test function $w$.

\begin{lemma} \label{EGJ-spread-restricted}
  Fix $\delta \in (0, 1)$ and $r, L \in \NN$ with $r \ge 2$. Let $\gamma \defeq \delta/(50L^2 r^2)$, and let $\Delta$ be sufficiently large in terms of $\delta, r, L$. Let $\cH$ be an $r$-uniform hypergraph satisfying $\Delta(\cH) \le \Delta$ and $\Delta_2(\cH) \le \Delta^{1-\delta}$ and $e(\cH) \le \exp(\Delta^{\gamma^2})$. Suppose that for each $\ell \in [L]$ we are given a set of clean $\ell$-uniform test functions $\mc{W}_{\ell}$ on $E(\cH)$ of size at most $\exp(\Delta^{\gamma^2})$ such that $w(E(\cH)) \ge B_{\Delta}(w) \Delta^{\delta}$ for all $w \in \mc{W}_{\ell}$. Then there exists a $((1 + \Delta^{-\gamma})/\Delta, \Delta^{\gamma})$-spread random matching $\mc{M}$ in $\cH$ that always satisfies
  \[
  w(\mc{M}) = \frac{(1 \pm \Delta^{-\gamma})w(E(\cH))}{\Delta^{\ell}}
  \]
  for each $\ell \in [L]$ and $w \in \mc{W}_{\ell}$.
\end{lemma}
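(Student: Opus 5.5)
This lemma is to be deduced from \cite[Theorem~1.3]{ehard-glock-joos-20} by re-running its random greedy / nibble construction and checking that the matching it produces is spread. The plan is as follows.

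First, I recall the EGJ construction. They take $\cH$, possibly embed it in an almost-regular auxiliary hypergraph, and run a sequence of ``nibble'' rounds. In round $i$, each remaining edge is selected independently with a small probability $p_i$. Selected edges that intersect another selected edge are discarded, and the survivors are added to the matching. The number of rounds is bounded, and the total selection probability over all rounds is $\sum_i p_i \approx 1/\Delta$ (relative to the current degrees). Their Theorem~1.3 shows that, with positive probability, each test function $w$ with $w(E(\cH)) \ge B_\Delta(w)\Delta^\delta$ is concentrated at $(1\pm\Delta^{-\gamma}) w(E(\cH))/\Delta^\ell$. It also shows that the good events (degree and codegree tracking, test-function concentration) hold with probability $1-\exp(-\Delta^{\Omega(1)})$. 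I would check that the hypotheses, in particular $e(\cH)$ and the number of test functions being at most $\exp(\Delta^{\gamma^2})$, match theirs with the stated $\gamma = \delta/(50L^2r^2)$ after adjusting constants.

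Second, I define $\mc M$ as the output of the process conditioned on the good event $\mc G$, which has probability at least $1/2$. Then \eqref{eq:EGJ}-type concentration holds always. Spreadness is the new content. For a set $Y$ of at most $\Delta^\gamma$ edges, $Y \subseteq \mc M$ requires each $e\in Y$ to have been selected in some round. Write $\PP[Y\subseteq \mc M \mid \mc G] \le \PP[Y\subseteq \mc M]/\PP[\mc G]$. This factor $1/\PP[\mc G]\le 2$ is too lossy per-edge, so I instead bound $\PP[Y\subseteq\mc M \wedge \mc G]$. Selections are independent Bernoulli across edges and rounds, but the selection probabilities $p_i$ in later rounds depend on the history only through deterministic parameters (in EGJ the probabilities are fixed in advance, given the tracked trajectories). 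So the probability that every $e\in Y$ is selected at some round is at most $\prod_{e\in Y}\sum_i p_i \le ((1+\Delta^{-2\gamma})/\Delta)^{|Y|}$.

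The main obstacle is absorbing the conditioning on $\mc G$. Since $\PP[\mc G^c]\le \exp(-\Delta^{c})$ for some $c$ exceeding $2\gamma$, we have
\[
\PP[Y\subseteq \mc M\mid\mc G]\le \frac{((1+\Delta^{-2\gamma})/\Delta)^{|Y|}}{1-\exp(-\Delta^{c})}.
\]
When $|Y|\ge1$, the denominator costs a factor $1+\exp(-\Delta^c)$, and this is at most $(1+\Delta^{-2\gamma})^{|Y|}$. Together these give $((1+\Delta^{-\gamma})/\Delta)^{|Y|}$. If the EGJ process is instead modified adaptively (for example, final cleanup or edge deletions), this only removes edges, so the bound still holds. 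I expect the delicate point to be confirming that the selection probabilities in the EGJ rounds are not adapted to the outcome in a way that could exceed $1/\Delta$ in total. If they are adaptive, I would cap each $p_i$ at its deterministic trajectory value and note that on $\mc G$ the capped and uncapped processes agree.
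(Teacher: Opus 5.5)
\textbf{There is a genuine gap.} Your overall skeleton matches the paper: bound $\PP[Y\subseteq\mc{M}]$ for the unconditioned construction, then divide by the probability of the good event. But the central estimate rests on a wrong picture of the construction and is not valid as stated.

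The construction behind \cite[Theorem~1.3]{ehard-glock-joos-20} is not a multi-round nibble. It has three steps:
\begin{enumerate}
\item randomly partition $V(\cH)$ into $P$ parts;
\item keep each edge of each $\cH[V_i]$ independently with probability $1/Q$;
\item decompose each resulting $\cH_i$ into $M$ matchings using the Molloy--Reed chromatic index theorem, and pick one colour class uniformly at random in each part.
\end{enumerate}

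Even for a genuine nibble, your bound $\PP[Y\subseteq\mc{M}]\le\prod_{e\in Y}\sum_i p_i$ would not give $(1+o(1))/\Delta$ per edge. Summing $p_i\approx\varepsilon/D_i$ over all rounds, while ignoring whether $e$ survives to round $i$, gives something of order $1/D_{\mathrm{final}}$. This is far larger than $1/\Delta$, because the tracked degrees shrink. Fixing this within a nibble would require controlling the \emph{joint} survival of the edges of $Y$. That correlation problem is the real difficulty, and it is not the adaptivity issue you flag. The EGJ-type construction is used precisely because it makes the computation explicit.

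\textbf{What is needed.} Fix a matching $E=\{e_1,\dots,e_N\}$ with $N\le\Delta^{\gamma}$, and a map $f\colon[N]\to[P]$ recording which part contains each $e_i$. Then
\[
\PP[E\subseteq\mc{M},\ f_E=f]\le P^{-Nr}Q^{-N}M^{-|f([N])|}.
\]
The Step~3 factor is only $M^{-1}$ per \emph{occupied part}, not per edge. The colouring is a deterministic function of $\cH_i$, so edges landing in the same part merely need to share the one randomly chosen class. Per-edge independence therefore fails exactly when several edges of $E$ fall in the same part.

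One compensates by summing over $f$ with $S(N,N_0)\le N^{2(N-N_0)}$, which gives
\[
\PP[E\subseteq\mc{M}]\le (P^{r-1}QM)^{-N}\sum_{N_1=0}^{N-1}(N^2M/P)^{N_1}.
\]
The parameters are chosen so that $P^{r-1}QM=(1+4\Delta^{-2\gamma})\Delta$ and $N^2M/P=O(\Delta^{-13Lr\gamma})$. Only then does the division by $\PP[\mc{E}]\ge 1-\exp(-\Delta^{\gamma/2})$ enter, and that final step of your argument is fine.
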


\begin{proof}
  Set 
  \begin{equation} \label{eq:choice-of-P-Q-M}
  P \defeq \Delta^{20 L r \gamma}, \quad Q \defeq \Delta^{1 - 20(r-1+1/(4L))Lr\gamma}, \quad M \defeq \frac{(1 + 4\Delta^{-2\gamma}) \Delta}{P^{r-1} Q} = (1 + 4\Delta^{-2\gamma}) \Delta^{5r\gamma}.
  \end{equation}
  As in \cite{ehard-glock-joos-20}, we consider the following three-step randomised construction.
  \begin{itemize}
    \item Step 1: Consider a random partition $V(\cH) \defeq V_1 \sqcup \ldots \sqcup V_P$ obtained by assigning each vertex independently to a uniformly random part.
    \item Step 2: For each $i \in [P]$, let $\cH_i$ be a random subgraph of $\cH[V_i]$ obtained by including each edge independently with probability $1/Q$.
    \item Step 3: Using a theorem of Molloy and Reed \cite[Theorem 2]{molloy-reed-00}, for each $i \in [P]$ we partition the edges of $\cH_i$ into $M$ matchings $\mc{M}_{i, 1}, \ldots, \mc{M}_{i, M}$. Choosing $j_i \in [M]$ uniformly at random for each $i \in [P]$, we obtain the matching $\mc{M} \defeq \bigcup_{i \in [P]} \mc{M}_{i, j_i}$ on the entire vertex set.
  \end{itemize}
  The argument in \cite[Proof of Theorem 1.3]{ehard-glock-joos-20} shows that with probability at least $1 - \exp(-\Delta^{\gamma/2})$ this construction is well-defined (i.e., the hypergraphs $\cH_i$ satisfy the necessary conditions for the application of the Molloy--Reed theorem), and the resulting matching $\mc{M}$ satisfies 
  \[
  w(\mc{M}) = \frac{(1 \pm \Delta^{-\gamma})w(E(\cH))}{\Delta^{\ell}}
  \]
  for each $\ell \in [L]$ and $w \in \mc{W}_{\ell}$. Denote this event by $\mc{E}$. We will show that the distribution of $\mc{M}$ conditional on $\mc{E}$ is $((1 + \Delta^{-\gamma})/\Delta, \Delta^{\gamma})$-spread. 

  Fix a non-empty set $E = \{e_1, \ldots, e_N\} \subseteq E(\cH)$ of size $N \le \Delta^{\gamma}$. We may also assume that $E$ is a matching, since otherwise the probability of $E \subseteq \mc{M}$ is zero. Let $f_E:[N] \to [P]$ be such that $e_i \in E(\cH[V_{f_E(i)}])$ for each $i \in [N]$. For each function $f:[N] \to [P]$, we bound the probability of the event that $E \subseteq \mc{M}$ and $f_E = f$. For this event to occur, in step 1 we need to have $e_i \subseteq V_{f(i)}$ for each $i \in [N]$, which happens with probability $P^{-N r}$; in step 2 we need each edge $e_i$ to be included in $\cH_{f(i)}$, which happens with probability $Q^{-N}$; and in step 3 we need each edge $e_i$ to be included in the chosen matching $\mc{M}_{f(i), j_{f(i)}}$, which happens with probability at most $M^{-N_0}$ where $N_0 \defeq |f([N])|$. Taking the product, we obtain that $\PP[E \subseteq \mc{M} \text{ and } f_E = f] \le P^{-N r} Q^{-N} M^{-N_0}$, and by the union bound over all functions $f$ we have
  \begin{align*}
    \PP[E \subseteq \mc{M}] = \sum_{f:[N] \to [P]} \PP[E \subseteq \mc{M} \text{ and } f_E = f] &\le P^{-N r} Q^{-N} \sum_{N_0 = 1}^N \big|\big\{f: [N] \to [P] : |f([N])| = N_0\big\}\big| M^{-N_0} \\
    &\le P^{-N r} Q^{-N} \sum_{N_0 = 1}^N S(N, N_0) P^{N_0} M^{-N_0},
  \end{align*}
  where $S(N, N_0)$ is the number of partitions of an $N$-element set into $N_0$ non-empty parts (also known as Stirling numbers of the second kind). It is easy to check that $S(N, N_0) \le N^{2(N - N_0)}$, and thus
  \[
    \PP[E \subseteq \mc{M}] \le P^{-N r} Q^{-N} \sum_{N_0 = 1}^N N^{2(N - N_0)} P^{N_0} M^{-N_0} = (P^{r-1} Q M)^{-N} \sum_{N_1 = 0}^{N-1} (N^2 M/P)^{N_1}.
  \]
  Since $P^{r-1} Q M = (1 + 4\Delta^{-2\gamma}) \Delta$ and $N^2 M / P = O(\Delta^{-13Lr\gamma})$ by our choice of parameters \cref{eq:choice-of-P-Q-M}, we conclude that 
  \[
  \PP[E \subseteq \mc{M} \mid \mc{E}] \le \frac{\PP[E \subseteq \mc{M}]}{\PP[\mc{E}]} \le \frac{((1 + 4\Delta^{-2\gamma}) / \Delta)^N \cdot (1 + O(\Delta^{-13Lr\gamma}))}{1 - \exp(-\Delta^{\gamma/2})} \le ((1 + \Delta^{-\gamma})/\Delta)^N. \qedhere
  \]
\end{proof}

\cref{EGJ-spread} follows from \cref{EGJ-spread-restricted} by introducing \emph{phantom edges} to artificially increase $w(E(\mathcal{H}))$.

\begin{proof}[\textbf{Proof of \cref{EGJ-spread}}]
  Let $\cH^*$ be the hypergraph obtained from $\cH$ by adding a matching $F$ of $s \defeq \lceil 2L\Delta^{1+\delta}\rceil$ new $r$-edges, vertex-disjoint from $V(\cH)$. Then $\Delta(\cH^*) \le \Delta$ and $\Delta_2(\cH^*) \le \Delta^{1-\delta}$, and
  \[
  e(\cH^*) = e(\cH) + s \le \exp(\Delta^{(2\gamma)^2}).
  \]
  Fix $\ell \in [L]$ and $w \in \mc{W}_{\ell}$, and set
  \[
  T_{w} \defeq \max\{w(E(\cH)), B_{\Delta}(w)\Delta^{\delta}\}, \qquad
  R_{w} \defeq T_{w} - w(E(\cH)).
  \]
  Define the $\ell$-uniform test function $\hat{w}$ on $E(\cH^*)$ by setting
  \[
  \hat{w}(Y) \defeq
  \begin{cases}
    w(Y) & \text{if } Y \subseteq E(\cH),\\
    R_{w}/\binom{s}{\ell} & \text{if } Y \subseteq F,\\
    0 & \text{otherwise.}
  \end{cases}
  \]
  Clearly, $\hat{w}$ is clean and $\hat{w}(E(\cH^*)) = T_{w}$. Moreover, for $j \in [\ell]$, the $j$-degrees coming from $F$ are at most
  \[
  R_{w}\frac{\binom{s-j}{\ell-j}}{\binom{s}{\ell}} \le R_{w}\left(\ell/s\right)^j \le R_{w}\Delta^{-j-\delta}.
  \]
  It follows that
  \[
  B_{\Delta}(\hat{w}) \le \max\{B_{\Delta}(w), R_{w}\Delta^{-\delta}\} \le T_{w}\Delta^{-\delta},
  \]
  and thus $\hat{w}(E(\cH^*)) \ge B_{\Delta}(\hat{w})\Delta^{\delta}$.

  So, we can apply \cref{EGJ-spread-restricted} to $\cH^*$ and the families $\hat{\mc{W}}_{\ell} \defeq \{\hat{w}: w \in \mc{W}_{\ell}\}$ (with $2\gamma$ in place of $\gamma$) to obtain a $((1+\Delta^{-\gamma})/\Delta, \Delta^{\gamma})$-spread random matching $\mc{M}^*$ in $\cH^*$ which always satisfies
  \[
  \hat{w}(\mc{M}^*) = \frac{(1 \pm \Delta^{-\gamma})T_{w}}{\Delta^{\ell}}
  \]
  for every $\ell \in [L]$ and $w \in \mc{W}_{\ell}$. Then $\mc{M} \defeq \mc{M}^* \cap E(\cH)$ is a $((1+\Delta^{-\gamma})/\Delta,\Delta^{\gamma})$-spread random matching in $\cH$. It remains to verify that
  \begin{equation} \label{eq:desired-EGJ-condition} 
  w(\mc{M}) = \frac{(1 \pm \Delta^{-\gamma})w(E(\cH)) \pm 2 B_{\Delta}(w)\Delta^{\delta}}{\Delta^{\ell}}
  \end{equation}
  for every $\ell \in [L]$ and $w \in \mc{W}_{\ell}$. Indeed, if $R_{w} = 0$ then
  \[
  w(\mc{M}) = \hat{w}(\mc{M}^*) = \frac{(1 \pm \Delta^{-\gamma})w(E(\cH))}{\Delta^{\ell}},
  \]
  which implies \cref{eq:desired-EGJ-condition}. Otherwise, $T_{w} = B_{\Delta}(w)\Delta^{\delta}$, and hence
  \[
  \frac{w(E(\cH)) - 2B_{\Delta}(w) \Delta^{\delta}}{\Delta^{\ell}} \le 0 \le w(\mc{M}) \le \hat{w}(\mc{M}^*) \le \frac{(1+\Delta^{-\gamma})B_{\Delta}(w)\Delta^{\delta}}{\Delta^{\ell}} \le \frac{2B_{\Delta}(w)\Delta^{\delta}}{\Delta^{\ell}},
  \]
  which also implies \cref{eq:desired-EGJ-condition}.
\end{proof}

\section{Numerical data for the no-four-on-a-circle problem}\label{sec:alphaevolve}

Here we present some numerical data for the no-four-on-a-circle problem, obtained using \emph{AlphaEvolve}.

Let $f_{\mathrm{IT}}(n)$ be the maximum size of a subset of $[n]^2$ with no four points forming an isosceles trapezium and no four points on a line.
We saw in \cref{asymmetric-count} that for large $n$ almost all cyclic quadrilaterals in $[n]^2$ are isosceles trapezia, so it seems plausible that $f_{\mathrm{IT}}(n)-f_{\cir}(n)=o(n)$. Empirically, the convergence rate in \cref{asymmetric-count} seems to be very slow, so $f_{\mathrm{IT}}(n)$ might be more illuminating than $f_{\cir}(n)$ for small $n$.

Recall from the introduction that Thiele~\cite{thiele-thesis,thiele-95} proved the upper bound $f_{\cir}(n) \le \lfloor(5n-3)/2\rfloor$. Actually, it is not hard to see that $\lfloor(5n-3)/2\rfloor$ is the maximum size of a subset of $[n]^2$ containing no isosceles trapezium whose parallel sides are horizontal or vertical, so this quantity is also an upper bound on $f_{\mathrm{IT}}(n)$.

\pgfplotstableread[col sep=space]{
n circle noiso axisaligned
1	1	1	1
2	3	3	3
3	5	5	6
4	7	7	8
5	9	10	11
6	11	13	13
7	14	15	16
8	15	17	18
9	18	20	21
10	19	22	23
11	21	24	26
12	23	27	28
13	26	29	31
14	27	32	33
15	29	34	36
16	31	36	38
17	33	38	41
18	35	40	43
19	36	43	46
20	38	45	48
21	41	47	51
22	42	49	53
23	44	51	56
24	45	54	58
25	47	55	61
26	49	58	63
27	50	60	66
28	52	62	68
29	54	64	71
30	55	66	73
}\trapeziadata

\pgfplotstabletranspose[
    colnames from=n,
    input colnames to=quantity
]\trapeziatransposed{\trapeziadata}

\pgfplotstablegetrowsof{\trapeziadata}
\pgfmathtruncatemacro{\lastrow}{\pgfplotsretval-1}

\pgfplotstablegetelem{0}{n}\of{\trapeziadata}
\edef\firstn{\pgfplotsretval}

\pgfplotstablegetelem{\lastrow}{n}\of{\trapeziadata}
\edef\lastn{\pgfplotsretval}

\begin{table}[ht]
\centering
\scriptsize
\setlength{\tabcolsep}{3pt}
\resizebox{\textwidth}{!}{%
\pgfplotstabletypeset[
    string type,
    every head row/.style={
        before row=\toprule,
        after row=\midrule
    },
    every last row/.style={
        after row=\bottomrule
    },
    columns/quantity/.style={
        column name={$n$},
        string type,
        column type=l,
        postproc cell content/.code={%
            \ifnum\pgfplotstablerow=0
                \pgfkeyssetvalue{/pgfplots/table/@cell content}{$f_{\cir}(n)\ge $}%
            \fi
            \ifnum\pgfplotstablerow=1
                \pgfkeyssetvalue{/pgfplots/table/@cell content}{$f_{\mr{IT}}(n)\ge $}%
            \fi
            \ifnum\pgfplotstablerow=2
                \pgfkeyssetvalue{/pgfplots/table/@cell content}{$\lfloor(5n-3)/2\rfloor$}%
            \fi
        }
    }
]\trapeziatransposed%
}
\label{tab:trapezia-horizontal}
\end{table}

\begin{figure}[ht]
\centering
\begin{tikzpicture}
\begin{axis}[
    width=\textwidth,
    height=8cm,
    xlabel={$n$},
    ylabel={},
    xmin=1,
    xmax=30,
    ymin=0,
    grid=both,
    xtick={1,5,10,15,20,25,30},
    legend pos=north west,
    legend cell align=left,
]

\addplot+[thick, no markers,color=blue]
    table[x=n, y=circle] {\trapeziadata};
\addlegendentry{Lower bound for $f_{\cir}(n)$}

\addplot+[thick, no markers,color=red]
    table[x=n, y=noiso] {\trapeziadata};
\addlegendentry{Lower bound for $f_{\mathrm{IT}}(n)$}

\addplot+[thick, no markers,color=black]
    table[x=n, y=axisaligned] {\trapeziadata};
\addlegendentry{$\lfloor(5n-3)/2\rfloor$}

\addplot[
    thick,
    dotted,
    mark=none,
    color=black,
    draw opacity=0.7,
    domain=\firstn:\lastn,
    samples=2
]
    {2*x};
\addlegendentry{$2n$}

\end{axis}
\end{tikzpicture}
\label{fig:trapezia-chart}
\end{figure}
In an accompanying file with the arXiv version of the paper, we include the actual point sets certifying these lower bounds. We emphasise that there is no guarantee that these lower bounds are sharp.

\end{document}